\newcommand{\rvline}{\hspace*{-\arraycolsep}\vline\hspace*{-\arraycolsep}}
\theoremstyle{plain}
\newtheorem{thm}{Theorem}[section]
\newtheorem{lem}[thm]{Lemma}
\newtheorem{prop}[thm]{Proposition}
\theoremstyle{definition}
\newtheorem{defn}[thm]{Definition}
\theoremstyle{remark}
\setlist[enumerate,1]{leftmargin=2em}
\def\H{\mathfrak{H}}
\def\F{\mathbb F}
\def\Z{\mathbb Z}
\def\e{\varepsilon}
\title[Finite-dimensional irreducible modules of DAHA of type $(C_1^\vee,C_1)$]{Finite-dimensional irreducible modules of the \\
universal DAHA of  type $(C_1^\vee,C_1)$}
\author{Hau-Wen Huang}
\address{
Hau-Wen Huang\\
Department of Mathematics\\
National Central University\\
Chung-Li 32001 Taiwan
}
\email{hauwenh@math.ncu.edu.tw}
\begin{document}
\begin{abstract}
Assume that $\F$ is an algebraically closed field and let $q$ denote a nonzero scalar in $\F$ that is not a root of unity. The universal DAHA (double affine Hecke algebra) $\H_q$ of type $(C_1^\vee,C_1)$ is a unital associative $\F$-algebra defined by generators and relations. The generators are $\{t_i^{\pm 1}\}_{i=0}^3$ and the relations assert that 
\begin{gather*}
t_it_i^{-1}=t_i^{-1} t_i=1
\quad 
\hbox{for all $i=0,1,2,3$};
\\
\hbox{$t_i+t_i^{-1}$ is central} 
\quad 
\hbox{for all $i=0,1,2,3$};
\\
t_0t_1t_2t_3=q^{-1}.
\end{gather*}
In this paper we describe the finite-dimensional irreducible $\H_q$-modules from many viewpoints and classify the finite-dimensional irreducible $\H_q$-modules up to isomorphism. The proofs are carried out in the language of linear algebra. 
\end{abstract}

\maketitle

{\footnotesize{\bf Keywords:} double affine Hecke algebras, irreducible modules, universal property.}

{\footnotesize{\bf MSC2020:} 20C08, 33D45, 33D80.}

\allowdisplaybreaks

\section{Introduction}
In the nineties, the double affine Hecke algebra (DAHA) was introduced by Cherednik in connection with quantum affine Knizhnik--Zamolodchikov equations and Macdonald eigenvalue problems \cite{DAHA_book,Cherednik:1992,Cherednik:1995}. Since that time, DAHAs and their representations have been explored in many other areas such as algebraic geometry \cite{DAHA:2003,DAHA:2005,DAHA:2009,Oblomkov2009}, combinatorics \cite{Tanaka:2018,Lee:2017}, 
integrable systems \cite{DAHA2007,DAHA&MAT2:2016,nilDAHA&qHahn:2016,nilDAHA&qHahn:2018} 
and 
orthogonal polynomials 
\cite{DAHA2013,DAHA:2008,Huang:R<BImodules,
DAHA&OP_book,koo07,koo08,daha&Z3,daha&LP}.

More than 30 years ago, a classification of irreducible modules of  affine Hecke algebras was already given in \cite{AHA87}. Despite this, the classification of finite-dimensional irreducible modules of DAHAs is still open up to now. We are here concerned with the representation theory of DAHA of type $(C_1^\vee,C_1)$.  Throughout this paper, let $\F$ denote a field and let $q$ be a nonzero scalar in $\F$. The DAHA of type $(C_1^\vee,C_1)$ is defined as follows:

\begin{defn}
Given nonzero scalars $k_0,k_1,k_2,k_3\in \F$ the {\it DAHA $\H_q(k_0,k_1,k_2,k_3)$ of type $(C_1^\vee,C_1)$} is a unital associative $\F$-algebra generated by $t_0,t_1,t_2,t_3$ subject to the following relations:
\begin{gather*}
(t_i-k_i)(t_i-k_i^{-1})=0 
\quad \hbox{for all $i=0,1,2,3$};
\\
t_0 t_1 t_2 t_3 =q^{-1}.
\end{gather*}
\end{defn}

\noindent We mainly restrict our attention to the case when $\F$ is algebraically closed and $q$ is not a root of unity.
Relying on the Crawley-Boevey's results on the Deligne--Simpson problem \cite{CrawleyBoevey2004}, the finite-dimensional irreducible $\H_q(k_0,k_1,k_2,k_3)$-modules was first classified in \cite{Oblomkov2009}. In this paper, we consider a central extension of DAHA of type $(C_1^\vee,C_1)$ defined as follows:

\begin{defn}
[Definition 3.1, \cite{DAHA2013}]
\label{defn:DAHA}
The {\it universal DAHA $\H_q$ of type $(C_1^\vee,C_1)$} is a unital associative $\F$-algebra defined by generators and relations. The generators are $\{t_i^{\pm 1}\}_{i=0}^3$ and the relations assert that 
\begin{gather}
t_it_i^{-1}=t_i^{-1} t_i=1
\quad 
\hbox{for all $i=0,1,2,3$};
\\
\hbox{$t_i+t_i^{-1}$ is central} 
\quad 
\hbox{for all $i=0,1,2,3$};
\\
t_0t_1t_2t_3=q^{-1}.
\label{t0t1t2t3}
\end{gather}
\end{defn}

In this paper, we describe the finite-dimensional irreducible $\H_q$-modules from many viewpoints and classify the finite-dimensional irreducible $\H_q$-modules without the aid of the Crawley-Boevey's results. Our proofs mainly use elementary linear algebra techniques. The outline of this paper is as follows:  
Given appropriate parameters $k_0,k_1,k_2,k_3$ we construct an even-dimensional $\H_q$-module $E(k_0,k_1,k_2,k_3)$ and describe how $\{t_i\}_{i=0}^3$ act on a basis of $E(k_0,k_1,k_2,k_3)$. 
In Theorem \ref{thm:iso}, we claim that if $E(k_0,k_1,k_2,k_3)$ is irreducible, then it is isomorphic to $E(k_0,k_1^{\pm 1},k_2^{\pm 1},k_3^{\pm 1})$. In Theorem \ref{thm:even}, we state a classification of even-dimensional $\H_q$-modules via the $\H_q$-modules $E(k_0,k_1,k_2,k_3)$. In Theorems \ref{thm:iso_O} and \ref{thm:odd} we state similar results on odd-dimensional $\H_q$-modules. See \S\ref{s:result} for details. In \S\ref{s:Verma} we display an infinite-dimensional $\H_q$-module $M(k_0,k_1,k_2,k_3)$ and establish its universal property. Motivated by the universal property, we discuss the existence of the simultaneous eigenvectors of $t_0$ and $t_3$ in finite-dimensional irreducible $\H_q$-modules in \S\ref{s:eigenvector}. Subsequently we show that $E(k_0,k_1,k_2,k_3)$ is a quotient of $M(k_0,k_1,k_2,k_3)$. We give a necessary and sufficient condition for $E(k_0,k_1,k_2,k_3)$ as irreducible in terms of the parameters $k_0,k_1,k_2,k_3,q$. Moreover we give a proof for Theorem \ref{thm:iso}. See \S\ref{section:iso} for details. In \S\ref{section:even} we show that any even-dimensional irreducible $\H_q$-module is obtained by twisting the $\H_q$-module $E(k_0,k_1,k_2,k_3)$ up to isomorphism. This leads to a proof for Theorem \ref{thm:even}. In \S\ref{section:iso_O} and \S\ref{section:odd} we prove Theorems \ref{thm:iso_O} and \ref{thm:odd}, respectively.

We have some remarks on the related literature. Inspired by \cite{koo07,koo08}, it was shown that the universal Askey--Wilson algebra $\triangle_q$ is an $\F$-subalgebra of $\H_q$ \cite{DAHA2013}. In \cite{Huang:2015}, it was given a classification of finite-dimensional irreducible $\triangle_q$-modules. In \cite{Huang:AW&DAHAmodule}, the present author classifies the lattices of $\triangle_q$-submodules of finite-dimensional irreducible $\H_q$-modules.

\section{Statement of results}\label{s:result}

In this section we state our main results (Theorems \ref{thm:iso}, \ref{thm:even}, \ref{thm:iso_O} and \ref{thm:odd}). In Propositions \ref{prop:E} and \ref{prop:O}, given appropriate parameters $k_0,k_1,k_2,k_3$ we construct an even-dimensional $\H_q$-module $E(k_0,k_1,k_2,k_3)$ and an odd-dimensional $\H_q$-module $O(k_0,k_1,k_2,k_3)$.
In Theorems \ref{thm:iso} and \ref{thm:iso_O} we display several equivalent descriptions of the finite-dimensional $\H_q$-modules $E(k_0,k_1,k_2,k_3)$ and $O(k_0,k_1,k_2,k_3)$ when they are irreducible.  In Theorems \ref{thm:even}  and \ref{thm:odd} we give our classification of finite-dimensional irreducible $\H_q$-modules via the $\H_q$-modules $E(k_0,k_1,k_2,k_3)$ and $O(k_0,k_1,k_2,k_3)$, provided that $\F$ is algebraically closed and $q$ is not a root of unity. In \S\ref{section:iso}--\S\ref{section:odd} we will give our proofs for the main  results.

Define 
\begin{align}\label{ci}
c_i&=t_i+t_i^{-1}
\qquad 
\hbox{for all $i=0,1,2,3$}.
\end{align}
Recall from Definition \ref{defn:DAHA} that $c_i$ is central in $\H_q$. 



\begin{prop}\label{prop:E}
Let $d\geq 1$ denote an odd integer. Assume that $k_0,k_1,k_2,k_3$ are nonzero scalars in $\F$ with 
$$
k_0^2=q^{-d-1}.
$$ 
Then there exists a $(d+1)$-dimensional $\H_q$-module $E(k_0,k_1,k_2,k_3)$ satisfying the following conditions:
\begin{enumerate}
\item There exists an $\F$-basis $\{v_i\}_{i=0}^d$ for $E(k_0,k_1,k_2,k_3)$ such that 
\begin{align*}
t_0 v_i 
&=
\left\{
\begin{array}{ll}
\textstyle
k_0^{-1} q^{-i} (1-q^i) (1-k_0^2 q^i) 
v_{i-1}
+
(
k_0+k_0^{-1}-k_0^{-1}q^{-i}
) 
v_i
\qquad
&\hbox{for $i=2,4,\ldots,d-1$},
\\
k_0^{-1} q^{-i-1}
(v_i-v_{i+1})
\qquad
&\hbox{for $i=1,3,\ldots,d-2$},
\end{array}
\right.
\\
t_0 v_0&=k_0 v_0,
\qquad
t_0 v_d=k_0v_d,
\\
t_1 v_i
&=
\left\{
\begin{array}{ll}
-k_1(1-q^i)(1-k_0^2 q^i)v_{i-1}
+k_1 v_i
+k_1^{-1} v_{i+1}
\qquad
&\hbox{for $i=2,4,\ldots,d-1$},
\\
k_1^{-1} v_i
\qquad
&\hbox{for $i=1,3,\ldots,d$},
\end{array}
\right.
\\
t_1 v_0 &=k_1 v_0+k_1^{-1} v_1,
\\
t_2 v_i 
&=
\left\{
\begin{array}{ll}
k_0^{-1} k_1^{-1} k_3^{-1} q^{-i-1}
(v_i-v_{i+1})
\qquad
&\hbox{for $i=0,2,\ldots,d-1$},
\\
\textstyle
\frac{(k_0 k_1 k_3 q^i-k_2)
(k_0 k_1 k_3 q^i- k_2^{-1})}
{k_0 k_1 k_3 q^i  }
v_{i-1}
+
(k_2+k_2^{-1}-
k_0^{-1} k_1^{-1} k_3^{-1} q^{-i}
) v_i
\qquad
&\hbox{for $i=1,3,\ldots,d$},
\end{array}
\right.
\\
t_3 v_i 
&=
\left\{
\begin{array}{ll}
k_3 v_i
\qquad
&\hbox{for $i=0,2,\ldots,d-1$},
\\
-k_3^{-1}(k_0k_1k_3q^i-k_2)
(k_0k_1k_3 q^i-k_2^{-1})
v_{i-1}
+k_3^{-1} v_i
+k_3 v_{i+1}
\qquad
&\hbox{for $i=1,3,\ldots,d-2$}.
\end{array}
\right.
\\
t_3 v_d &=
-k_3^{-1}(k_0k_1k_3q^d-k_2)
(k_0k_1k_3 q^d-k_2^{-1})
v_{d-1}
+k_3^{-1} v_d.
\end{align*}

\item The elements $c_0,c_1,c_2,c_3$ act on $E(k_0,k_1,k_2,k_3)$ as scalar multiplication by $$
k_0+k_0^{-1},
\quad 
k_1+k_1^{-1},
\quad 
k_2+k_2^{-1},
\quad 
k_3+k_3^{-1}
$$ 
respectively.
\end{enumerate}
\end{prop}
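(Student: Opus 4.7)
The plan is to realize $E(k_0,k_1,k_2,k_3)$ as a concrete linear representation on the $(d+1)$-dimensional space $V = \bigoplus_{i=0}^d \F v_i$. Define endomorphisms $T_0, T_1, T_2, T_3$ of $V$ by the formulas in (i), and verify the defining relations of $\H_q$ from Definition \ref{defn:DAHA}: (a) each $T_i$ is invertible and $(T_i - k_i)(T_i - k_i^{-1}) = 0$, which is equivalent to $T_i$ being invertible with $T_i + T_i^{-1} = (k_i + k_i^{-1}) I$; (b) $T_0 T_1 T_2 T_3 = q^{-1} I$. Once these hold, $t_i \mapsto T_i$ extends to an $\H_q$-action on $V$, and part (ii) of the proposition is automatic since $c_i = t_i + t_i^{-1}$ then acts as the scalar $k_i + k_i^{-1}$.

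For (a), the operators $T_1$ and $T_3$ admit a transparent spectral description: each $v_i$ with odd $i$ is a $k_1^{-1}$-eigenvector of $T_1$, and each $v_i$ with even $i$ is a $k_3$-eigenvector of $T_3$. On the remaining basis vectors, $(T_1 - k_1)$ and $(T_3 - k_3^{-1})$ land in the corresponding eigenspaces, so a second application of $(T_1 - k_1^{-1})$, respectively $(T_3 - k_3)$, annihilates them. For $T_0$ and $T_2$ the verification is direct: on each $v_i$, expand $(T_j - k_j)(T_j - k_j^{-1}) v_i$ as a linear combination of two basis vectors, and check that both coefficients vanish identically, which reduces to a short polynomial identity in $q^{\pm i}$ and $k_j^{\pm 2}$.

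The hypothesis $k_0^2 = q^{-d-1}$ is forced by the boundary behavior of $T_0$ at $v_d$. Extending the ``odd $i$'' formula for $T_0$ naively to $i = d$ with the convention $v_{d+1} = 0$ would give $T_0 v_d = k_0^{-1} q^{-d-1} v_d$, which coincides with the assigned value $k_0 v_d$ precisely when $k_0^2 = q^{-d-1}$. Thus this hypothesis is exactly the consistency condition that closes up the action into a finite-dimensional module; an analogous check at $v_0$ is automatic because the factor $(1 - q^i)$ at $i = 0$ vanishes.

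The main obstacle is (b): verifying $T_0 T_1 T_2 T_3 v_i = q^{-1} v_i$ for every $i$. Since each of $T_0, T_1, T_2, T_3$ has a different formula depending on the parity of the input index, the calculation splits into two cases ($i$ even versus $i$ odd), and within each case the intermediate vectors occupy a span of three or four consecutive basis vectors. I would organize the computation so that at each stage the quadratic factors $(1-q^i)(1-k_0^2 q^i)$ and $(k_0 k_1 k_3 q^i - k_2^{\pm 1})$ telescope with the scalar prefactors $k_0^{-1} k_1^{-1} k_3^{-1} q^{-i-1}$ coming from the ``even $i$'' side; at $i = d$ one invokes $k_0^2 = q^{-d-1}$ once more to eliminate terms that would otherwise involve $v_{d+1}$. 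After collecting terms the cancellation is extensive and leaves $q^{-1} v_i$ in every case, completing the verification.
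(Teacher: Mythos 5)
Your proposal is correct and takes essentially the same route as the paper, whose entire proof is the one-line remark that the proposition ``is routine to verify by using Definition \ref{defn:DAHA}'' --- that is, a direct check that the displayed operators satisfy $(t_i-k_i)(t_i-k_i^{-1})=0$ (equivalently, invertibility with $t_i+t_i^{-1}$ acting as the scalar $k_i+k_i^{-1}$, which gives (ii) for free) together with $t_0t_1t_2t_3=q^{-1}$, exactly as you outline. Your spectral treatment of $t_1,t_3$, the $2\times 2$-block reduction for $t_0,t_2$, and the observation that the hypothesis $k_0^2=q^{-d-1}$ enters only as the boundary consistency condition at $v_d$ (it is precisely what makes the truncation of the infinite-dimensional module close up, and what produces the scalar $q^{-1}$ rather than $k_0^2q^{d}$ in the product relation at $v_d$) are accurate refinements of that routine verification.
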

\begin{proof}
It is routine to verify the proposition by using Definition \ref{defn:DAHA}.
\end{proof}

The proof of the following result concerning the isomorphism class of the $\H_q$-module $E(k_0,k_1,k_2,k_3)$ is given in \S\ref{section:iso}.

\begin{thm}\label{thm:iso}
Let $d\geq 1$ denote an odd integer. Assume that $k_0,k_1,k_2,k_3$ are nonzero scalars in $\F$ with 
$
k_0^2=q^{-d-1}$.  
If the $\H_q$-module $E(k_0,k_1,k_2,k_3)$ is irreducible then 
the following hold: 
\begin{enumerate}
\item $E(k_0,k_1,k_2,k_3)$ is isomorphic to the $\H_q$-module $E(k_0,k_1^{-1},k_2,k_3)$.
\item $E(k_0,k_1,k_2,k_3)$ is isomorphic to the $\H_q$-module  $E(k_0,k_1,k_2^{-1},k_3)$.
\item $E(k_0,k_1,k_2,k_3)$ is isomorphic to the $\H_q$-module $E(k_0,k_1,k_2,k_3^{-1})$.
\end{enumerate}
\end{thm}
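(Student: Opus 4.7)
The plan is to combine the universal property of the module $M(k_0,k_1,k_2,k_3)$ (to be established in \S\ref{s:Verma}) with the irreducibility criterion to be derived in \S\ref{section:iso}. The key observation is that each scalar $k_i+k_i^{-1}$ is invariant under $k_i \mapsto k_i^{-1}$. Hence, by Proposition \ref{prop:E}(ii), the central elements $c_0,c_1,c_2,c_3$ act by identical scalars on $E(k_0,k_1,k_2,k_3)$ and on $E(k_0,k_1^{\pm 1},k_2^{\pm 1},k_3^{\pm 1})$. Moreover, in each of these modules the basis vector $v_0$ is a $t_0$-eigenvector of eigenvalue $k_0$ and generates the whole module.

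For part (i) I would argue as follows. By the universal property of $M(k_0,k_1,k_2,k_3)$, both $E(k_0,k_1,k_2,k_3)$ and $E(k_0,k_1^{-1},k_2,k_3)$ arise as quotients of this single universal module, via the unique $\H_q$-homomorphisms sending the canonical generator of $M$ to the respective vector $v_0$. Assuming $E(k_0,k_1,k_2,k_3)$ is irreducible, the irreducibility criterion of \S\ref{section:iso} -- which should be expressible purely in terms of $k_0$, $q$, and the central scalars $k_i+k_i^{-1}$, and hence symmetric under $k_1\mapsto k_1^{-1}$ -- forces $E(k_0,k_1^{-1},k_2,k_3)$ to be irreducible as well. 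Since both are then irreducible $(d+1)$-dimensional quotients of the same $M(k_0,k_1,k_2,k_3)$, a uniqueness argument for the maximal proper $\H_q$-submodule of $M(k_0,k_1,k_2,k_3)$ of the appropriate codimension yields the desired isomorphism. Parts (ii) and (iii) follow by the same reasoning applied to the inversions $k_2\mapsto k_2^{-1}$ and $k_3\mapsto k_3^{-1}$.

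The hard part will be verifying that the irreducibility criterion is genuinely invariant under each of the inversions $k_i\mapsto k_i^{-1}$ for $i=1,2,3$, and that the irreducible quotient of $M(k_0,k_1,k_2,k_3)$ of dimension $d+1$ is unique. If either step proves delicate, a concrete alternative is to construct an explicit change of basis. For part (i), assuming $k_1^2\neq 1$ (the case $k_1^2=1$ is trivial), I would build a new basis $\{w_i\}_{i=0}^d$ of $E(k_0,k_1,k_2,k_3)$ on which $t_0,t_1,t_2,t_3$ act by the formulas of Proposition \ref{prop:E} with $k_1$ replaced by $k_1^{-1}$. The odd-indexed $w_i$ would be obtained as $t_1$-eigenvectors of eigenvalue $k_1$, produced via the projector $(t_1-k_1^{-1})/(k_1-k_1^{-1})$ (well defined thanks to the quadratic relation $(t_1-k_1)(t_1-k_1^{-1})=0$ holding on $E(k_0,k_1,k_2,k_3)$) applied to suitable linear combinations of $\{v_i\}$; the even-indexed $w_i$ would then be determined by imposing the prescribed $t_2$-action. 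The computational heart is verifying that $t_0$ and $t_3$ also act on $\{w_i\}$ by the required formulas, which amounts to a finite collection of identities among rational expressions in $k_0,k_1,k_2,k_3,q$.
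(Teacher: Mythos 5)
Your proposal contains genuine gaps. The centerpiece of your main plan---realizing both modules as quotients of the single module $M(k_0,k_1,k_2,k_3)$ and then invoking ``a uniqueness argument for the maximal proper $\H_q$-submodule of the appropriate codimension''---is asserted but never argued, and it is not formal: if $K$ and $K'$ were distinct codimension-$(d+1)$ submodules with irreducible quotients, then $K+K'=M(k_0,k_1,k_2,k_3)$ and $M(k_0,k_1,k_2,k_3)/(K\cap K')\cong M(k_0,k_1,k_2,k_3)/K\oplus M(k_0,k_1,k_2,k_3)/K'$, a perfectly consistent configuration; ruling it out amounts to showing that every such kernel contains $m_{d+1}$, which is essentially the delicate coordinate analysis the paper performs in Theorem \ref{thm:onto} (forcing $v_{d+1}=0$), not a freebie. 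The paper's actual proof of (i) sidesteps uniqueness entirely: since $k_0^2=q^{-d-1}$, the $i^{\,\rm th}$ factor of $\prod_{i=0}^{d}(1-k_0k_1^{-1}q^{2\lceil i/2\rceil}Y^{(-1)^{i-1}})$, which kills $u_0\in E(k_0,k_1^{-1},k_2,k_3)$ by Lemma \ref{lem:XYinE}(ii), is an invertible multiple of the $(d-i)^{\,\rm th}$ factor of (\ref{e:universal_E}) inside the commutative algebra $\F[Y^{\pm 1}]$, so Proposition \ref{prop:universal_E} directly yields a homomorphism $E(k_0,k_1,k_2,k_3)\to E(k_0,k_1^{-1},k_2,k_3)$ (Proposition \ref{prop:iso2}---which, note, holds even \emph{without} irreducibility). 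Moreover, your claim that the irreducibility criterion should be ``expressible purely in terms of $k_0$, $q$, and the central scalars $k_i+k_i^{-1}$'' is false: Theorem \ref{thm:irr} involves $k_0k_1k_2k_3$, $k_0k_1^{-1}k_2k_3$, $k_0k_1k_2^{-1}k_3$, $k_0k_1k_2k_3^{-1}$, which are not functions of the $k_i+k_i^{-1}$; the criterion is invariant under each inversion only via the relation $k_0^2=q^{-d-1}$, and in the paper this invariance (Lemma \ref{lem:irr2}) is itself \emph{deduced from} the part-(i) isomorphism, so deriving (i) from the criterion risks circularity.

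The plan breaks outright for part (iii). In $E(k_0,k_1,k_2,k_3^{-1})$ the cyclic vector $u_0$ satisfies $t_3u_0=k_3^{-1}u_0$, so the universal property exhibits that module as a quotient of $M(k_0,k_1,k_2,k_3^{-1})$, \emph{not} of $M(k_0,k_1,k_2,k_3)$; ``the same reasoning'' applied to $k_3\mapsto k_3^{-1}$ therefore does not get off the ground. What is needed---and is the computational heart of the paper's proof of (iii)---is to manufacture inside $E(k_0,k_1,k_2,k_3^{-1})$ a simultaneous eigenvector $v$ with $t_0v=k_0v$ and $t_3v=k_3v$: the paper writes $v=\sum_{i=1}^{(d+1)/2}\prod_{h=1}^{i-1}\frac{k_3^{-2}-q^{2h-2i}}{1-q^{2h-2i}}\,\chi_{2i-2h+1}^{-1}\,w_{2i-1}$ in terms of explicit $k_3$-eigenvectors $w_{2i-1}$ of $t_3$, where irreducibility (via Theorem \ref{thm:irr}) guarantees $\chi_i\neq 0$ and $q^{2i}\neq 1$ so that the coefficients make sense; only then do Theorem \ref{thm:universal} and Proposition \ref{prop:universal_E} produce a nontrivial homomorphism, which irreducibility of both source and target upgrades to an isomorphism. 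Your fallback change-of-basis sketch is drafted only for (i), where the paper's Proposition \ref{prop:iso2} already provides a clean version via $Y$-products rather than $t_1$-projectors; for (iii) the analogous computation is precisely the missing eigenvector construction. Finally, part (ii) needs no machinery at all: the formulas of Proposition \ref{prop:E}(i) are literally invariant under $k_2\mapsto k_2^{-1}$, so $v_i\mapsto u_i$ is already an isomorphism.
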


Let $V$ denote an $\H_q$-module. For any $\F$-algebra automorphism $\e$ of $\H_q$ the notation  
$$
V^\e
$$
stands for the $\H_q$-module obtained by twisting the $\H_q$-module $V$ via $\e$.

 Let $\Z$ denote the additive group of integers. Recall that $\Z/4\Z$ is the additive group of integers modulo $4$. 
Observe that there exists a unique $\Z/4\Z$-action on $\H_q$ such that each element of $\Z/4\Z$ acts on $\H_q$ as an $\F$-algebra automorphism in the following way:

\begin{table}[H]
\centering
\extrarowheight=3pt
\begin{tabular}{c|rrrr}
$\e\in \Z/4\Z$  &$t_0$ &$t_1$ &$t_2$ &$t_3$ 
\\

\midrule[1pt]

${0\pmod 4}$ &$t_0$  &$t_1$ &$t_2$ &$t_3$ 
\\
${1\pmod 4}$ &$t_1$ &$t_2$ &$t_3$ &$t_0$ 
\\
${2\pmod 4}$ &$t_2$ &$t_3$ &$t_0$ &$t_1$
\\
${3\pmod 4}$ &$t_3$ &$t_0$ &$t_1$ &$t_2$
\end{tabular}
\caption{The $\Z/4\Z$-action on $\H_q$}\label{Z/4Z-action}
\end{table}

The classification of even-dimensional irreducible $\H_q$-modules is stated as follows and the proof is given in \S\ref{section:even}.

\begin{thm}\label{thm:even}
Assume that $\F$ is algebraically closed and $q$ is not a root of unity. 
Let $d\geq 1$ denote an odd integer. 
Let $\mathbf{EM}_d$ denote the set of all isomorphism classes of irreducible $\H_q$-modules that have dimension $d+1$. Let $\mathbf{EP}_d$ denote the set of all quadruples $(k_0,k_1,k_2,k_3)$ of nonzero scalars in $\F$ that satisfy $
k_0^2=q^{-d-1}$ and 
\begin{gather*}
k_0k_1k_2k_3, k_0k_1^{-1}k_2k_3, k_0k_1k_2^{-1}k_3,  k_0k_1k_2k_3^{-1}
\not=
q^{-i}
\qquad
\hbox{for all $i=1,3,\ldots,d$}.
\end{gather*}
Define an action of the abelian group $\{\pm 1\}^3$ on $\mathbf{EP}_d$ by 
\begin{align*}
(k_0,k_1,k_2,k_3)^{(-1,1,1)} &= (k_0,k_1^{-1},k_2,k_3),
\\
(k_0,k_1,k_2,k_3)^{(1,-1,1)} &= (k_0,k_1,k_2^{-1},k_3),
\\
(k_0,k_1,k_2,k_3)^{(1,1,-1)} &= (k_0,k_1,k_2,k_3^{-1})
\end{align*}
for all $(k_0,k_1,k_2,k_3)\in \mathbf{EP}_d$. Let $\mathbf{EP}_d/\{\pm 1\}^3$ denote the set of the $\{\pm 1\}^3$-orbits of $\mathbf{EP}_d$. For $(k_0,k_1,k_2,k_3)\in \mathbf{EP}_d$ let $[k_0,k_1,k_2,k_3]$ denote the $\{\pm 1\}^3$-orbit of $\mathbf{EP}_d$ that contains $(k_0,k_1,k_2,k_3)$. Then there exists a bijection $\mathcal E:\Z/4\Z \times \mathbf{EP}_d/\{\pm 1\}^3 \to \mathbf{EM}_d$ given by
\begin{eqnarray*}
(\e,[k_0,k_1,k_2,k_3])
&\mapsto &
\hbox{the isomorphism class of $E(k_0,k_1,k_2,k_3)^\e$}
\end{eqnarray*}
for all $\e\in \Z/4\Z$ and all $[k_0,k_1,k_2,k_3]\in \mathbf{EP}_d/\{\pm 1\}^3$.
\end{thm}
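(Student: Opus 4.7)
The plan is to verify that $\mathcal{E}$ is well-defined, surjective, and injective. Throughout I use four ingredients from earlier sections: the (presumably sharp) irreducibility criterion for $E(k_0,\ldots,k_3)$ established in \S\ref{section:iso}, which by construction reads $(k_0,\ldots,k_3)\in\mathbf{EP}_d$; Theorem~\ref{thm:iso}; the universal property of $M(k_0,\ldots,k_3)$ from \S\ref{s:Verma}; and the analysis of simultaneous eigenvectors of $t_0,t_3$ in \S\ref{s:eigenvector}. Well-definedness is essentially unpacking: $E(k_0,\ldots,k_3)$ has dimension $d+1$ by Proposition~\ref{prop:E}; twisting preserves dimension and irreducibility; and Theorem~\ref{thm:iso} is precisely independence of the $\{\pm1\}^3$-orbit representative.

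For surjectivity, take an irreducible $\H_q$-module $V$ of dimension $d+1$. Algebraic closedness of $\F$ and the centrality of each $c_i$ give scalars $\alpha_i = \ell_i + \ell_i^{-1}$ with $\ell_i\in\F^\times$ by which $c_i$ acts. Using the eigenvector analysis of \S\ref{s:eigenvector}, one locates a twist index $\e\in\Z/4\Z$ for which $V^\e$ admits a nonzero simultaneous eigenvector of $t_0,t_3$ whose $t_0$-eigenvalue $k_0$ satisfies $k_0^2=q^{-d-1}$. The universal property of $M(k_0,\ldots,k_3)$ then produces a surjection $M(k_0,\ldots,k_3)\twoheadrightarrow V^\e$; since $E(k_0,\ldots,k_3)$ is the unique $(d+1)$-dimensional quotient of $M(k_0,\ldots,k_3)$ and $V^\e$ is irreducible of matching dimension, this factors through an isomorphism $V^\e\cong E(k_0,\ldots,k_3)$. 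Irreducibility of $V^\e$ places $(k_0,\ldots,k_3)$ in $\mathbf{EP}_d$.

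For injectivity, suppose $E(k_0,\ldots,k_3)^\e\cong E(k'_0,\ldots,k'_3)^{\e'}$. Comparing the scalars by which $c_i$ acts on each side yields $k_{i+\e}+k_{i+\e}^{-1} = k'_{i+\e'}+(k'_{i+\e'})^{-1}$ for all $i\in\Z/4\Z$. Setting $\delta=\e'-\e$ and taking $i=-\e$ gives $k'_\delta\in\{k_0,k_0^{-1}\}$; combined with $(k'_0)^2=k_0^2=q^{-d-1}$ and the assumption that $q$ is not a root of unity (so $(k_0^{-1})^2\neq q^{-d-1}$), I anticipate being led to $\delta=0$ and $k'_0=k_0$, after which the remaining three equations place $(k'_1,k'_2,k'_3)$ in the $\{\pm1\}^3$-orbit of $(k_1,k_2,k_3)$. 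The main obstacle is the surjectivity step, specifically promoting the weak data ``$\alpha_i=\ell_i+\ell_i^{-1}$ for some $\ell_i$'' to the sharp integrality constraint ``$\ell_i^2=q^{-d-1}$ at a distinguished index $i$'', which determines $\e$; a related difficulty in the injectivity step is ruling out accidental coincidences where some $k_j$ with $j\neq 0$ also satisfies $k_j^2=q^{-d-1}$, since such coincidences would a priori allow multiple twists to yield isomorphic modules and must be excluded using the finer module structure carried by $V$ (beyond its central character).
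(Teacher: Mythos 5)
Your outline reproduces the paper's architecture (well-definedness via Theorem~\ref{thm:iso} and the irreducibility criterion of Theorem~\ref{thm:irr}; surjectivity via the eigenvector analysis and the universal property; injectivity via isomorphism invariants), but the two steps you yourself flag as ``obstacles'' are exactly the substance of the paper's proof, and your proposal does not supply them. For surjectivity, you cannot simply ``locate a twist index $\e$ for which $V^\e$ admits a simultaneous $(t_0,t_3)$-eigenvector whose $t_0$-eigenvalue satisfies $k_0^2=q^{-d-1}$'': Proposition~\ref{prop:X_eigenvector} only produces an eigenvector with \emph{some} nonzero eigenvalue $k_0$, with no integrality constraint attached. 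The paper's Theorem~\ref{thm:onto} combines Propositions~\ref{prop:X_eigenvector} and~\ref{prop:Y_eigenvector} so that, after a suitable twist, $V$ carries \emph{both} a simultaneous eigenvector $v$ of $t_0,t_3$ and a simultaneous eigenvector $w$ of $t_0,t_1$; it maps $M(k_0,k_1,k_2,k_3)\to V$ by $m_0\mapsto v$, shows the images $v_0,\ldots,v_d$ of $m_0,\ldots,m_d$ form a basis (else irreducibility fails), and then proves $v_{d+1}=0$ by a coordinate argument: a rank--nullity count identifies the $k_1^{-1}$-eigenspace of $t_1$, forcing the $0$-th coordinate $a_0$ of $v_{d+1}$ to vanish, and the recurrence $\varrho_i a_i=(\cdots)a_{i-1}$ extracted from Lemma~\ref{lem:XYinP}(i) together with the absence of proper submodules kills all $a_i$. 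Only then does $\varrho_{d+1}=0$ follow, which (as $q$ is not a root of unity) is what yields $k_0^2=q^{-d-1}$. Your substitute claim that $E(k_0,k_1,k_2,k_3)$ is ``the unique $(d+1)$-dimensional quotient of $M(k_0,k_1,k_2,k_3)$'' is nowhere established and is essentially equivalent to the same vanishing $v_{d+1}=0$; the paper instead invokes Proposition~\ref{prop:universal_E} once the operator (\ref{e:universal_E}) is known to annihilate $v_0$.

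For injectivity, comparing central characters cannot determine the twist, for precisely the reason you concede: a quadruple in $\mathbf{EP}_d$ may well have $k_j^2=q^{\pm(d+1)}$ for some $j\neq 0$ (take $k_1=k_0$ with $k_2,k_3$ generic), and then your relations $k'_{j+\delta}\in\{k_j^{\pm1}\}$ admit solutions with $\delta\neq 0$, so your anticipated conclusion $\delta=0$ does not follow from the central character alone. The ``finer module structure'' the paper uses is concrete and cheap: Lemma~\ref{lem:injective} computes the determinants of $t_0,t_1,t_2,t_3$ on $E(k_0,k_1,k_2,k_3)$ to be $q^{-d-1},1,1,1$; twisting by $\e$ cyclically permutes this tuple, determinants are isomorphism invariants, and $q^{-d-1}\neq 1$ pins down $\e=\e'$. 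Only after that does the central-character comparison via Proposition~\ref{prop:E}(ii) give $k'_0=k_0$ (using $(k'_0)^2=k_0^2=q^{-d-1}$ and $q$ not a root of unity) and $k'_i=k_i^{\pm 1}$ for $i=1,2,3$, i.e.\ equality of $\{\pm1\}^3$-orbits. In short: your plan is the right skeleton and is repairable, but as written both the surjectivity and the injectivity arguments halt exactly where the real work lies, so the proposal has genuine gaps at both points.
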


An odd-dimensional $\H_q$-module $O(k_0,k_1,k_2,k_3)$ with four  parameters $k_0,k_1,k_2,k_3$ is built as follows:

\begin{prop}\label{prop:O}
Let $d\geq 0$ denote an even integer. Assume that $k_0,k_1,k_2,k_3$ are nonzero scalars in $\F$ with 
$$
k_0 k_1 k_2 k_3=q^{-d-1}.
$$
Then there exists a $(d+1)$-dimensional $\H_q$-module
 $O(k_0,k_1,k_2,k_3)$ satisfying the following conditions:
\begin{enumerate} 
\item There exists an $\F$-basis $\{v_i\}_{i=0}^d$ for $O(k_0,k_1,k_2,k_3)$  such that 
\begin{align*}
t_0 v_i 
&=
\left\{
\begin{array}{ll}
\textstyle
k_0^{-1} q^{-i} (1-q^i) (1-k_0^2 q^i) 
v_{i-1}
+
(
k_0+k_0^{-1}-k_0^{-1}q^{-i}
) 
v_i
\qquad
&\hbox{for $i=2,4,\ldots,d$},
\\
\textstyle
k_0^{-1} q^{-i-1}
(v_i-v_{i+1})
\qquad
&\hbox{for $i=1,3,\ldots,d-1$},
\end{array}
\right.
\\
t_0 v_0&= k_0 v_0,
\\
t_1 v_i
&=
\left\{
\begin{array}{ll}
-k_1(1-q^i)(1-k_0^2 q^i)v_{i-1}
+k_1 v_i
+k_1^{-1} v_{i+1}
\qquad
&\hbox{for $i=2,4,\ldots,d-2$},
\\
k_1^{-1} v_i
\qquad
&\hbox{for $i=1,3,\ldots,d-1$},
\end{array}
\right.
\\
t_1 v_0&= k_1 v_0 +k_1^{-1} v_1,
\qquad 
t_1 v_d=-k_1(1-q^d)(1-k_0^2 q^d)v_{d-1}
+k_1 v_d,
\\
t_2 v_i 
&=
\left\{
\begin{array}{ll}
k_2 q^{d-i}
(v_i-v_{i+1})
\qquad
&\hbox{for $i=0,2,\ldots,d-2$},
\\
\textstyle
-k_2(1-k_2^{-2}q^{i-d-1})
(1-q^{d-i+1})
v_{i-1}
+
(k_2+k_2^{-1}-
k_2 q^{d-i+1}) v_i
\qquad
&\hbox{for $i=1,3,\ldots,d-1$},
\end{array}
\right.
\\
t_2 v_d &= k_2 v_d,
\\
t_3 v_i 
&=
\left\{
\begin{array}{ll}
k_3 v_i
\qquad
&\hbox{for $i=0,2,\ldots,d$},
\\
\textstyle
-k_3^{-1}
(1-k_2^{-2} q^{i-d-1})
(1-q^{i-d-1})
v_{i-1}
+k_3^{-1} v_i
+k_3 v_{i+1}
\qquad
&\hbox{for $i=1,3,\ldots,d-1$}.
\end{array}
\right.
\end{align*}
\item The elements $c_0,c_1,c_2,c_3$ act on $O(k_0,k_1,k_2,k_3)$ as scalar multiplication by 
$$
k_0+k_0^{-1},
\quad 
k_1+k_1^{-1},
\quad 
k_2+k_2^{-1},
\quad 
k_3+k_3^{-1}
$$ respectively.
\end{enumerate}
\end{prop}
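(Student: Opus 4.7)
The plan is to mimic the verification strategy that proves Proposition \ref{prop:E}. Concretely, working from Definition \ref{defn:DAHA}, I need to check three things on the basis $\{v_j\}_{j=0}^d$: each linear operator $t_i$ is invertible, the sum $t_i + t_i^{-1}$ acts as the scalar $k_i + k_i^{-1}$, and $t_0 t_1 t_2 t_3$ acts as $q^{-1}$. The second of these simultaneously delivers assertion (ii) and the centrality requirement in Definition \ref{defn:DAHA}.

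The first step is to show that each $t_i$ satisfies the quadratic $t_i^2 - (k_i + k_i^{-1}) t_i + 1 = 0$, from which invertibility and $t_i^{-1} = (k_i + k_i^{-1}) - t_i$ follow at once. For $t_0$ the basis breaks cleanly into a one-dimensional eigenspace spanned by $v_0$ and two-dimensional invariant subspaces spanned by $\{v_i, v_{i+1}\}$ for odd $i \in \{1, 3, \ldots, d-1\}$; on each such block the trace is $k_0 + k_0^{-1}$ by inspection and the determinant simplifies to $1$ after substituting the coefficient formulas. For $t_1$ and $t_3$ the verification is shorter because the odd-indexed vectors are eigenvectors of $t_1$ with eigenvalue $k_1^{-1}$ and the even-indexed vectors are eigenvectors of $t_3$ with eigenvalue $k_3$; on the remaining vectors I compute $t_i^2 v_j$ directly and match it against $(k_i + k_i^{-1}) t_i v_j - v_j$. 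The treatment of $t_2$ is analogous. The boundary cases $j \in \{0, d\}$, where the formulas for $t_0 v_0$, $t_1 v_0$, $t_1 v_d$, and $t_2 v_d$ are modified, fall out of the same computations applied to those explicit formulas.

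The second step is to verify $t_0 t_1 t_2 t_3 v_j = q^{-1} v_j$ for every $j$. I would organize the calculation by first applying $t_3$ (which equals multiplication by $k_3$ on even-indexed $v_j$, trimming many cases), then $t_2$, $t_1$, $t_0$, collecting the coefficients that appear on the range $v_{j-3}, \ldots, v_{j+3}$. The hypothesis $k_0 k_1 k_2 k_3 = q^{-d-1}$ enters here essentially: it couples the four parameters to the dimension in precisely the way that forces cross-terms containing factors such as $(1 - q^i)(1 - k_0^2 q^i)$ and $(1 - k_2^{-2} q^{i-d-1})(1 - q^{d-i+1})$ to cancel. As a sanity check, the constraint is visibly necessary at $j=0$: one finds $t_0 t_1 t_2 t_3 v_0 = k_0 k_1 k_2 k_3 q^d v_0$, which equals $q^{-1} v_0$ exactly when $k_0 k_1 k_2 k_3 q^{d+1} = 1$.

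The principal obstacle is purely computational rather than conceptual: expanding $t_0 t_1 t_2 t_3 v_j$ in general produces many summands distributed along the basis, and showing that every off-diagonal coefficient vanishes while every diagonal coefficient equals $q^{-1}$ demands patient bookkeeping of polynomial identities in $q, k_0, k_1, k_2, k_3$ modulo the balancing relation $k_0 k_1 k_2 k_3 q^{d+1} = 1$. Because Proposition \ref{prop:E} is proven by exactly this kind of routine verification, I expect the same approach to succeed here without any new conceptual input.
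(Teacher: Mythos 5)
Your proposal is correct and matches the paper exactly: the paper's entire proof of Proposition \ref{prop:O} is the remark that it is routine to verify via Definition \ref{defn:DAHA}, and your plan (checking the quadratic relation $t_i^2-(k_i+k_i^{-1})t_i+1=0$ blockwise, which yields invertibility, centrality of $t_i+t_i^{-1}$, and assertion (ii), then verifying $t_0t_1t_2t_3v_j=q^{-1}v_j$ using $k_0k_1k_2k_3=q^{-d-1}$) is precisely that routine verification spelled out. Your sanity check at $j=0$, giving $t_0t_1t_2t_3v_0=k_0k_1k_2k_3q^d\,v_0$, is also correct.
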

\begin{proof}
It is routine to verify the proposition by using Definition \ref{defn:DAHA}.
\end{proof}

The proof of the following result concerning the isomorphism class of the $\H_q$-module $O(k_0,k_1,k_2,k_3)$ is given in \S\ref{section:iso_O}.

\begin{thm}\label{thm:iso_O}
Let $d\geq 0$ denote an even integer. Assume that $k_0,k_1,k_2,k_3$ are nonzero scalars in $\F$ with 
$
k_0 k_1 k_2 k_3=q^{-d-1}$. 
If the $\H_q$-module $O(k_0,k_1,k_2,k_3)$ is irreducible then the following hold:
\begin{enumerate}
\item $O(k_0,k_1,k_2,k_3)$ is isomorphic to the $\H_q$-module $O(k_1,k_2,k_3,k_0)^{3\bmod {4}}$.

\item $O(k_0,k_1,k_2,k_3)$ is isomorphic to the $\H_q$-module $O(k_2,k_3,k_0,k_1)^{2\bmod {4}}$.

\item $O(k_0,k_1,k_2,k_3)$ is isomorphic to the $\H_q$-module  $O(k_3,k_0,k_1,k_2)^{1\bmod {4}}$.
\end{enumerate}
\end{thm}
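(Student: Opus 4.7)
The plan is to prove (iii) and derive (i) and (ii) from it by iteration. The three statements correspond to the three non-trivial elements of $\Z/4\Z$ acting cyclically on parameters, so once (iii) yields $O(k_0,k_1,k_2,k_3) \cong O(k_3,k_0,k_1,k_2)^{1\bmod{4}}$, one applies (iii) again to the (irreducible) module $O(k_3,k_0,k_1,k_2)$, twists both sides of the resulting isomorphism by $1 \bmod 4$, and composes to obtain (ii); one further iteration gives (i). Note that the parameter constraint $k_0 k_1 k_2 k_3 = q^{-d-1}$ is invariant under cyclic permutation, so each relevant $O$-module is well-defined by Proposition \ref{prop:O}, and both sides of each isomorphism have dimension $d+1$ and the same central character (a routine check: on $O(k_3,k_0,k_1,k_2)^{1\bmod{4}}$ the element $c_i$ acts as scalar multiplication by $k_i + k_i^{-1}$, matching $O(k_0,k_1,k_2,k_3)$).

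The heart of the argument is to construct a nonzero $\H_q$-homomorphism $\phi: O(k_0,k_1,k_2,k_3) \to O(k_3,k_0,k_1,k_2)^{1\bmod{4}}$; by irreducibility of the source and Schur's lemma, such a $\phi$ is automatically injective, hence an isomorphism by dimension count. For this I would invoke the universal property of the module $M(k_0,k_1,k_2,k_3)$ from Section \ref{s:Verma}. Since $O(k_0,k_1,k_2,k_3)$ is generated by the distinguished vector $v_0$ satisfying $t_0 v_0 = k_0 v_0$ and $t_3 v_0 = k_3 v_0$, it suffices to exhibit in $O(k_3,k_0,k_1,k_2)^{1\bmod{4}}$ a nonzero vector satisfying the same conditions. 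Under the twist, this amounts to finding a nonzero common eigenvector of $t_1$ and $t_0$ in the untwisted $O(k_3,k_0,k_1,k_2)$ with eigenvalues $k_0$ and $k_3$ respectively, whose existence can be established from the explicit formulas in Proposition \ref{prop:O}, or more systematically by applying the results of Section \ref{s:eigenvector}.

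The main obstacle will be confirming that the resulting map is $\H_q$-linear \emph{and} lands in the correct quotient: even with matching central characters and matching eigenvector data at a distinguished vector, the universal property of $M(k_0,k_1,k_2,k_3)$ only guarantees a map into \emph{some} quotient, so care is needed to ensure the image generates a copy of $O(k_0,k_1,k_2,k_3)$ rather than a strictly smaller quotient. If the universal-property route proves delicate, a direct fallback is to write down an explicit ansatz $v_i \mapsto \sum_j c_{i,j} w_j$ relating the natural bases of the two modules and verify the four intertwining relations $\phi(t_i v) = t_i \phi(v)$ directly using Proposition \ref{prop:O}; this is a tedious but routine linear-algebra task given the banded action of the $t_i$.
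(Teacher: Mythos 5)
Your overall architecture coincides with the paper's: the paper also establishes a single twist-isomorphism via the universal property of $M(k_0,k_1,k_2,k_3)$ and then iterates it around $\Z/4\Z$ (it proves (i) first, via Proposition \ref{prop:iso2_O}, and derives (ii) and (iii) by applying (i) to the cyclically shifted module; your mirror-image iteration starting from (iii) is equally valid, and your parenthetical claim that the shifted module is irreducible is justifiable in one line, since twisting by an automorphism preserves irreducibility). However, the heart of your argument --- producing the distinguished vector in the twisted target --- is left unproven, and your ``systematic'' fallback does not work as stated. Propositions \ref{prop:X_eigenvector} and \ref{prop:Y_eigenvector} assume $\F$ algebraically closed and $q$ not a root of unity, which are \emph{not} hypotheses of Theorem \ref{thm:iso_O}; moreover they yield only an either/or conclusion (a common eigenvector for $t_0,t_1$ \emph{or} for $t_2,t_3$) with no control over the eigenvalues, which a priori could be any of $k_3^{\pm 1}, k_0^{\pm 1}$. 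Your map requires a common eigenvector of $(t_0,t_1)$ in $O(k_3,k_0,k_1,k_2)$ with the \emph{prescribed} eigenpair $(k_3,k_0)$; the $k\leftrightarrow k^{-1}$ ambiguity would send you to a module with the wrong parameters, and resolving it by appeal to the classification would be circular.

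The reason the paper starts from the $3\bmod 4$ twist rather than your $1\bmod 4$ twist is precisely that this difficulty then evaporates: in $O(k_1,k_2,k_3,k_0)$ the basis vector $u_d$ is already an explicit simultaneous eigenvector with $t_2u_d=k_3u_d$ and $t_3u_d=k_0u_d$, so after twisting it satisfies $t_0u_d=k_0u_d$, $t_3u_d=k_3u_d$; and since the $3\bmod 4$ twist carries $Y=t_0t_1$ to $X=t_3t_0$, the factorization condition of Proposition \ref{prop:universal_O} (the ``lands in the correct quotient'' issue you correctly flagged) follows at once from the $X$-lowering relations of Lemma \ref{lem:XYinO}(i). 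Under your $1\bmod 4$ twist, no basis vector is a common $(t_0,t_1)$-eigenvector, and $Y$ twists to $t_1t_2=q^{-1}X^{-1}$, so both the eigenvector and the annihilation of $\prod_{i=0}^d(1-k_0k_1q^{2\lceil i/2\rceil}Y^{(-1)^{i-1}})$ at it would require an ad hoc computation of the kind the paper performs in its proof of Theorem \ref{thm:iso}(iii) for the $E$-modules (an explicit sum of basis vectors with coefficients whose nonvanishing comes from the irreducibility criterion, Theorem \ref{thm:irr_O}). So your plan is repairable --- either by carrying out that explicit construction, or more simply by switching to the $3\bmod 4$ twist as the base case --- but as written the key existence step is a genuine gap, not a routine check.
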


The classification of odd-dimensional irreducible $\H_q$-modules is stated as follows and the proof is given in \S\ref{section:odd}.

\begin{thm}\label{thm:odd}
Assume that $\F$ is algebraically closed and $q$ is not a root of unity. 
Let $d\geq 0$ denote an even integer. 
Let $\mathbf{OM}_d$ denote the set of all isomorphism classes of irreducible $\H_q$-modules that have dimension $d+1$. Let $\mathbf{OP}_d$ denote the set of all quadruples $(k_0,k_1,k_2,k_3)$ of nonzero scalars in $\F$ that satisfy $k_0 k_1 k_2 k_3=q^{-d-1}$ and 
$$
k_0^2,k_1^2,k_2^2,k_3^2\not=
q^{-i}
\qquad 
\hbox{for all $i=2,4,\ldots,d$}.
$$
Then there exists a bijection $\mathcal O:\mathbf{OP}_d\to \mathbf{OM}_d$ given by
\begin{eqnarray*}
(k_0,k_1,k_2,k_3)
&\mapsto &
\hbox{the isomorphism class of $O(k_0,k_1,k_2,k_3)$}
\end{eqnarray*}
for all $(k_0,k_1,k_2,k_3)\in \mathbf{OP}_d$.
\end{thm}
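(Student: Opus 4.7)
The plan is to establish three properties of $\mathcal{O}$: well-definedness (i.e.\ irreducibility of $O(k_0,k_1,k_2,k_3)$ for every $(k_0,k_1,k_2,k_3)\in\mathbf{OP}_d$), injectivity, and surjectivity, following the same strategy as the proof of Theorem \ref{thm:even} in \S\ref{section:even}. In contrast with the even-dimensional classification, neither a $\Z/4\Z$-factor nor a $\{\pm 1\}^3$-quotient appears in the domain: by Theorem \ref{thm:iso_O}, a $\Z/4\Z$-twist of an irreducible $O$-module is again an $O$-module (with cyclically rotated parameters), and each individual $k_i$ will turn out to be recoverable from the module, not merely the central action $k_i+k_i^{-1}$.

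For well-definedness, fix $(k_0,\ldots,k_3)\in\mathbf{OP}_d$. The nonvanishing scalars $k_1^{-1}$ and $k_3$ that appear as off-diagonal coefficients in the actions of $t_1$ and $t_3$ show at once that $v_0$ is cyclic: the ladder $v_0,v_1,v_2,\ldots,v_d$ is traversed by alternately applying $t_1$ and $t_3$. To upgrade cyclicity to irreducibility I would first prove that $v_0$ is, up to scalar, the unique common eigenvector of $t_0,t_3$ in $O(k_0,\ldots,k_3)$ with eigenvalues $(k_0,k_3)$; this is where the inequalities defining $\mathbf{OP}_d$ matter, as a hypothetical second such eigenvector would force one of the factors $1-k_0^2 q^{2j}$ or $1-k_2^{-2} q^{2j-d-2}$ (for some $j\in\{1,\ldots,d/2\}$) to vanish, contradicting the inequalities on $k_0^2$ and $k_2^2$. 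The conditions on $k_1^2,k_3^2$ enter the analogous uniqueness statements for the common eigenvectors relevant after $\Z/4\Z$-rotation via Theorem \ref{thm:iso_O}. Invoking the simultaneous eigenvector results from \S\ref{s:eigenvector} on any hypothetical proper submodule $W$ produces a common $(t_0,t_3)$-eigenvector inside $W$, which must be a scalar multiple of $v_0$, forcing $W=\H_q v_0=O(k_0,\ldots,k_3)$.

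For injectivity, suppose $O(k_0,\ldots,k_3)\cong O(k'_0,\ldots,k'_3)$ with both quadruples in $\mathbf{OP}_d$. Applying Schur's lemma to the central $c_i$ gives $k'_i\in\{k_i,k_i^{-1}\}$. To pin down the sign I would compute the eigenvalue multiplicities of $t_i$ directly from Proposition \ref{prop:O}: for $t_0$, the basis partitions into the singleton $\{v_0\}$ (on which $t_0$ acts as $k_0$) together with the two-dimensional $t_0$-invariant blocks $\mathrm{span}(v_{2j-1},v_{2j})$ for $j=1,\ldots,d/2$, each of trace $k_0+k_0^{-1}$ and determinant $1$, forcing eigenvalues $k_0$ and $k_0^{-1}$ in each block; in total, $t_0$ has eigenvalue $k_0$ with multiplicity $(d+2)/2$ and $k_0^{-1}$ with multiplicity $d/2$. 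When $k_0\neq\pm 1$ the larger-multiplicity eigenvalue recovers $k_0$, and when $k_0=\pm 1$ the sign ambiguity is vacuous. Analogous block decompositions recover $k_1,k_2,k_3$.

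For surjectivity, let $V$ be an irreducible $\H_q$-module of dimension $d+1$. Each $c_i$ acts as a scalar on $V$, written $k_i+k_i^{-1}$, and the relation $t_i^2-c_it_i+1=0$ forces the eigenvalues of $t_i$ to lie in $\{k_i,k_i^{-1}\}$; in odd dimension the two multiplicities are unequal, and I take $k_i$ to be the larger-multiplicity one. Taking determinants of both sides of $t_0t_1t_2t_3=q^{-1}$ on $V$ and using $\det(t_i)=k_i^{(d+2)/2}k_i^{-d/2}=k_i$ yields $k_0k_1k_2k_3=q^{-d-1}$. By \S\ref{s:eigenvector} there exists $w\in V$ with $t_0w=k_0w$ and $t_3w=k_3w$; the universal property of $M(k_0,\ldots,k_3)$ from \S\ref{s:Verma} then yields a surjective $\H_q$-module homomorphism $\varphi\colon M(k_0,\ldots,k_3)\to V$. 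The main obstacle I expect is showing that $\varphi$ factors through the quotient $O(k_0,\ldots,k_3)$ of $M(k_0,\ldots,k_3)$, which demands a detailed analysis of the submodule lattice of $M(k_0,\ldots,k_3)$ under the hypotheses of $\mathbf{OP}_d$ so that the submodule defining $O$ can be shown to lie in $\ker\varphi$. Once this is done, $V\cong O(k_0,\ldots,k_3)$ by irreducibility of both sides, and $(k_0,\ldots,k_3)\in\mathbf{OP}_d$ follows because any failure of an inequality $k_i^2\neq q^{-j}$ would introduce a proper submodule into $O$, contradicting the irreducibility of $V$.
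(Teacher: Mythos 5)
Your overall skeleton (well-definedness, injectivity, surjectivity) matches the paper's, and your injectivity argument is essentially the paper's: the multiplicity count $((d+2)/2,d/2)$ on $O(k_0,k_1,k_2,k_3)$ is equivalent to the paper's Lemma \ref{lem:injective_O}, which records $\det t_i=k_i$ on $O(k_0,k_1,k_2,k_3)$ and concludes injectivity since isomorphic modules give equal determinants; that part is sound. The genuine gaps are in the other two parts. For surjectivity, you defer exactly the step that is the heart of the paper's proof: showing that the homomorphism $M(k_0,k_1,k_2,k_3)\to V$ kills the submodule generated by $m_{d+1}$. The paper's Theorem \ref{thm:onto_O} does this by proving $v_{d+1}=0$ in $V$, via a coordinate analysis of a simultaneous $(t_0,t_1)$-eigenvector $w$ (split into the cases $t_0w=k_0w$ and $t_0w=k_0^{-1}w$) to get $a_0=0$, then the recurrence for the $a_i$ coming from Lemma \ref{lem:XYinP}(i), with any violation producing a proper submodule. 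Moreover, your determinant shortcut to $k_0k_1k_2k_3=q^{-d-1}$ is unjustified at the point you invoke it: you assume the eigenvalue multiplicities of $t_i$ on the \emph{abstract} module $V$ are $((d+2)/2,d/2)$, but a priori nothing excludes, say, $t_i$ acting as the scalar $k_i$ (the quadratic $(t_i-k_i)(t_i-k_i^{-1})=0$ permits any multiplicity split). The paper instead obtains the product relation only \emph{after} $v_{d+1}=0$ is established, by computing the characteristic polynomial of $t_2$ (roots $k_2$ and $k_2^{-1}$ with $d/2$ copies each, plus $k_0^{-1}k_1^{-1}k_3^{-1}q^{-d-1}$) and exploiting a careful initial choice of $k_2$ as the root of larger algebraic multiplicity. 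Finally, \S\ref{s:eigenvector} only yields a dichotomy ($t_0,t_3$ \emph{or} $t_1,t_2$ have a common eigenvector), so your assertion that a $(t_0,t_3)$-eigenvector exists in $V$ is not warranted; the paper runs the fourfold case analysis, twisting by $\Z/4\Z$ and then removing the twist with Theorem \ref{thm:iso_O} — which is precisely why no $\Z/4\Z$-factor survives in the odd classification, as you correctly anticipated.

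For well-definedness your route is also incomplete and, as written, circular. You invoke Theorem \ref{thm:iso_O} to transfer the common-eigenvector analysis to rotated parameters, but that theorem \emph{hypothesizes} irreducibility of $O(k_0,k_1,k_2,k_3)$ — the very thing being proved (the non-circular tool is Proposition \ref{prop:iso2_O}, whose homomorphism exists unconditionally and is an isomorphism exactly under the $\mathbf{OP}_d$-type inequalities on $k_1^2,k_3^2$). Even granting the transfer, your uniqueness claim only addresses a common eigenvector with eigenvalue pair $(k_0,k_3)$; a hypothetical submodule could contain a common eigenvector with eigenvalues $(k_0^{\pm1},k_3^{\pm1})$ in any sign combination, or only a $(t_1,t_2)$-eigenvector, and none of these cases is handled. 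The paper avoids all of this by proving the irreducibility criterion (Theorem \ref{thm:irr_O}) by direct computation: the operators $R$ and $S_i$ built from $X$ and $Y$ produce scalars $L_{ij}$ with $RS_iv_j=L_{ij}v_0$, the matrix $(L_{ij})$ is lower triangular with explicitly computed nonzero diagonal under the $\mathbf{OP}_d$ conditions, so any nonzero submodule contains the cyclic vector $v_0$. So your proposal is a reasonable plan with the correct architecture, but the two computational cores of the paper's proof — the triangular $L$-matrix argument for irreducibility and the $v_{d+1}=0$ analysis for surjectivity — are missing, and the shortcuts offered in their place do not close the gaps.
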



\section{An infinite-dimensional $\H_q$-module and its universal property}\label{s:Verma}

In this section, we present an infinite-dimensional $\H_q$-module and its universal property.

\begin{prop}\label{prop:P}
For any nonzero scalars $k_0,k_1,k_2,k_3\in \F$
there exists an $\H_q$-module $M(k_0,k_1,k_2,k_3)$ satisfying the following conditions:
\begin{enumerate}
\item There exists  
an $\F$-basis $\{m_i\}_{i=0}^\infty$ for  $M(k_0,k_1,k_2,k_3)$ such that 
\begin{align*}
t_0 m_i 
&=
\left\{
\begin{array}{ll}
\textstyle
k_0^{-1} q^{-i} (1-q^i) (1-k_0^2 q^i) 
m_{i-1}
+
(
k_0+k_0^{-1}-k_0^{-1}q^{-i}
) 
m_i
\qquad
&\hbox{if $i=2,4,\ldots$},
\\
k_0^{-1} q^{-i-1}
(m_i-m_{i+1})
\qquad
&\hbox{if $i=1,3,\ldots$},
\end{array}
\right.
\\
t_0 m_0 &= k_0 m_0,
\\
t_1 m_i
&=
\left\{
\begin{array}{ll}
-k_1(1-q^i)(1-k_0^2 q^i)m_{i-1}
+k_1 m_i
+k_1^{-1} m_{i+1}
\qquad
&\hbox{if $i=2,4,\ldots$},
\\
k_1^{-1} m_i
\qquad
&\hbox{if $i=1,3,\ldots$},
\end{array}
\right.
\\
t_1 m_0 &=k_1 m_0+k_1^{-1} m_1,
\\
t_2 m_i 
&=
\left\{
\begin{array}{ll}
k_0^{-1} k_1^{-1} k_3^{-1} q^{-i-1}
(m_i-m_{i+1})
\qquad
&\hbox{if $i=0,2,\ldots$},
\\
\frac{(k_0 k_1 k_3 q^i-k_2)(k_0 k_1 k_3 q^i-k_2^{-1})}{k_0 k_1 k_3 q^i}
m_{i-1}
+
(k_2+k_2^{-1}-
k_0^{-1} k_1^{-1} k_3^{-1} q^{-i}
) m_i
\qquad
&\hbox{if $i=1,3,\ldots$},
\end{array}
\right.
\\
t_3 m_i 
&=
\left\{
\begin{array}{ll}
k_3 m_i
\qquad
&\hbox{if $i=0,2,\ldots$},
\\
-k_3^{-1}(k_0k_1k_3q^i-k_2)
(k_0k_1k_3 q^i-k_2^{-1} )
m_{i-1}
+k_3^{-1} m_i
+k_3 m_{i+1}
\qquad
&\hbox{if $i=1,3,\ldots$}.
\end{array}
\right.
\end{align*}
\item The elements $c_0,c_1,c_2,c_3$ act on $M(k_0,k_1,k_2,k_3)$ as scalar multiplication by 
$$
k_0+k_0^{-1},
\qquad 
k_1+k_1^{-1},
\qquad 
k_2+k_2^{-1},
\qquad 
k_3+k_3^{-1}
$$ respectively.
\end{enumerate}
\end{prop}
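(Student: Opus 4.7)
The plan is to realize $M := M(k_0,k_1,k_2,k_3)$ as the $\F$-vector space with formal basis $\{m_i\}_{i=0}^\infty$, define four linear operators $T_0, T_1, T_2, T_3 \in \mathrm{End}_\F(M)$ by the displayed formulas in part (i), and then show these operators satisfy the defining relations of $\H_q$. Since each formula writes $T_j m_i$ as a finite $\F$-linear combination of basis vectors, the $T_j$ extend linearly. By the universal property of the presentation in Definition \ref{defn:DAHA}, it suffices to verify that each $T_j$ is invertible, each $T_j + T_j^{-1}$ acts as an $\F$-scalar on $M$ (whence commutes with every operator, so is central in the generated subalgebra of $\mathrm{End}_\F(M)$), and $T_0 T_1 T_2 T_3 = q^{-1}\cdot \mathrm{Id}_M$.

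I would handle invertibility and centrality in one stroke by establishing $T_j^2 - (k_j + k_j^{-1})T_j + \mathrm{Id} = 0$ on each basis vector $m_i$. This quadratic identity forces $T_j^{-1} = (k_j + k_j^{-1})\cdot \mathrm{Id} - T_j$ and $T_j + T_j^{-1} = (k_j + k_j^{-1})\cdot \mathrm{Id}$, proving part (ii) along the way. The verification splits by the parity of $i$, with small boundary cases at $i=0$. For $T_0$, the span of each pair $(m_{2j-1}, m_{2j})$ is $T_0$-invariant and the $2\times 2$ matrix of $T_0$ on it has trace $k_0 + k_0^{-1}$ and determinant $1$; an analogous case analysis handles $T_1, T_2, T_3$, exploiting that each of these operators acts by a scalar $k_j^{\pm 1}$ on one of the two parities of index.

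The central computation is the verification of $T_0 T_1 T_2 T_3\, m_i = q^{-1} m_i$ for every $i\geq 0$. My plan is to fix $i$, split into $i$ even versus $i$ odd, and apply $T_3, T_2, T_1, T_0$ in sequence using the explicit formulas. Several designed cancellations occur: when $i$ is even, $T_3 m_i = k_3 m_i$ and the $m_{i+1}$-contribution produced by $T_2$ is annihilated by $T_1$ because $T_1 m_{i+1} = k_1^{-1} m_{i+1}$; when $i$ is odd, the coefficient of $m_{i-1}$ in $T_2 T_3 m_i$ vanishes identically, the normalization of $T_2 m_i$ being chosen for exactly this purpose. The final simplification in each case rests on the polynomial identity $k_0^{-1} q^{-i}(1-q^i)(1-k_0^2 q^i) = k_0^{-1} q^{-i} - (k_0 + k_0^{-1}) + k_0 q^i$ together with the expansion of $\alpha := (k_0 k_1 k_3 q^i - k_2)(k_0 k_1 k_3 q^i - k_2^{-1})$. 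The main obstacle is purely bookkeeping: one must track carefully which sub-case of the formula applies to each index encountered as the four operators are composed, especially around the boundary $i=0$ where the formulas for $T_0 m_0$ and $T_1 m_0$ depart from the generic patterns. Each resulting identity nonetheless reduces to a short polynomial manipulation in $k_0, k_1, k_2, k_3, q$, after which the universal property of $\H_q$ delivers the $\H_q$-module structure asserted in the proposition.
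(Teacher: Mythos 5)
Your proposal is correct and matches the paper's intent exactly: the paper's proof is the one-line remark that the module structure is ``routine to verify'' from Definition~\ref{defn:DAHA}, and your plan spells out precisely that routine verification (checking the quadratic relation $T_j^2-(k_j+k_j^{-1})T_j+\mathrm{Id}=0$ blockwise, which yields invertibility and part (ii) simultaneously, and then confirming $T_0T_1T_2T_3=q^{-1}\,\mathrm{Id}$ by parity cases). The cancellations you identify, including the vanishing of the $m_{i-1}$-coefficient in $T_2T_3m_i$ for odd $i$ via $(k_0k_1k_3q^i-k_2)(k_0k_1k_3q^i-k_2^{-1})/(k_0k_1k_3q^i)=k_0k_1k_3q^i-(k_2+k_2^{-1})+(k_0k_1k_3q^i)^{-1}$, are exactly the ones that make the computation close.
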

\begin{proof}
It is routine to verify the proposition via Definition \ref{defn:DAHA}.
\end{proof}

Until the end of this paper we adopt the following conventions: Let $k_0,k_1,k_2,k_3$ denote any nonzero scalars in $\F$. Let $\{m_i\}_{i=0}^\infty$ denote the $\F$-basis for $M(k_0,k_1,k_2,k_3)$ from Proposition \ref{prop:P}(i). 
In addition, we set the following parameters associated with $k_0,k_1,k_2,k_3,q$:
\begin{align}
\phi_i
&=\left\{
\begin{array}{ll}
(1-q^i) (1-k_0^2 q^i)
\qquad 
&\hbox{if $i$ is even},
\\
(k_0 k_1^{-1} k_3 q^i-k_2)(k_0 k_1^{-1} k_3 q^i-k_2^{-1})
\qquad 
&\hbox{if $i$ is odd};
\end{array}
\right.
\label{phi}
\\
\varrho_i
&=\left\{
\begin{array}{ll}
(1-q^i) (1-k_0^2 q^i)
\qquad 
&\hbox{if $i$ is even},
\\
(k_0 k_1 k_3 q^i-k_2)(k_0 k_1 k_3 q^i-k_2^{-1})
\qquad 
&\hbox{if $i$ is odd};
\end{array}
\right.
\label{varphi}
\\
\chi_i
&=\left\{
\begin{array}{ll}
(1-q^i) (1-k_0^2 q^i)
\qquad 
&\hbox{if $i$ is even},
\\
(k_0 k_1 k_3^{-1} q^i-k_2)(k_0 k_1 k_3^{-1} q^i-k_2^{-1})
\qquad 
&\hbox{if $i$ is odd};
\end{array}
\right.
\label{psi}
\\
\psi_i
&=\left\{
\begin{array}{ll}
(1-q^i) (1-k_1^2 q^i)
\qquad 
&\hbox{if $i$ is even},
\\
(k_0 k_1 k_2 q^i-k_3)(k_0 k_1 k_2 q^i-k_3^{-1})
\qquad 
&\hbox{if $i$ is odd}
\end{array}
\right.
\label{varrho}
\end{align}
for all $i\in \Z$.

Define 
\begin{align}
X&=t_3t_0,
\label{X}
\\
Y&=t_0t_1.
\label{Y}
\end{align}

\begin{lem}\label{lem:XYinP}
\begin{enumerate}
\item The action of $X$ on $M(k_0,k_1,k_2,k_3)$ is as follows:
\begin{align*}
(1-k_0 k_3 q^{2\lceil \frac{i}{2}\rceil} X^{(-1)^{i-1}}) m_{i}=
\left\{
\begin{array}{ll}
0
\qquad 
&\hbox{if $i=0$},
\\
\varrho_i
m_{i-1}
\qquad 
&\hbox{if $i=1,2,3,\ldots$}.
\end{array}
\right.
\end{align*}

\item The action of $Y$ on $M(k_0,k_1,k_2,k_3)$ is as follows:
\begin{align*}
(1-k_0 k_1 q^{2\lceil \frac{i}{2}\rceil} Y^{(-1)^{i-1}}) m_i =m_{i+1}
\qquad 
\hbox{for all $i=0,1,2,\ldots$}.
\end{align*}
\end{enumerate}
\end{lem}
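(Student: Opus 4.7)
The plan is to verify both formulas by a short case analysis using the explicit actions in Proposition \ref{prop:P}(i). The relevant structural features are that $t_0 m_i$ involves only $\{m_i, m_{i+1}\}$ when $i$ is odd and only $\{m_{i-1}, m_i\}$ when $i\geq 2$ is even; that $t_1$ acts as multiplication by $k_1^{-1}$ on odd-indexed basis vectors; and that $t_3$ acts as multiplication by $k_3$ on even-indexed basis vectors. Because of these simplifications the products $X=t_3 t_0$ and $Y=t_0 t_1$ have very short expansions on each $m_i$.

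For part (i), I would treat $i=0$, odd $i$, and even $i\geq 2$ separately. The case $i=0$ is immediate: $t_0 m_0=k_0 m_0$ and $t_3 m_0=k_3 m_0$ give $X m_0=k_0 k_3 m_0$, hence $(1-k_0 k_3 X^{-1})m_0=0$. For odd $i$, I would apply $t_3$ to $t_0 m_i=k_0^{-1}q^{-i-1}(m_i-m_{i+1})$; since $t_3 m_{i+1}=k_3 m_{i+1}$, the $m_{i+1}$ contributions cancel against the corresponding piece of $t_3 m_i$, leaving $X m_i=k_0^{-1}k_3^{-1}q^{-i-1}(m_i-\varrho_i m_{i-1})$, which rearranges to the asserted identity (noting $2\lceil i/2\rceil=i+1$ and $(-1)^{i-1}=1$ for odd $i$). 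For even $i\geq 2$, I would apply $t_3$ to $t_0 m_i=k_0^{-1}q^{-i}\varrho_i m_{i-1}+(k_0+k_0^{-1}-k_0^{-1}q^{-i})m_i$, using the odd-$(i-1)$ formula on $m_{i-1}$ and $t_3 m_i=k_3 m_i$. The coefficient of $m_i$ collapses to $k_0 k_3 q^i$ via the identity
\begin{equation*}
k_0^{-1}q^{-i}\varrho_i+k_0+k_0^{-1}-k_0^{-1}q^{-i}=k_0 q^i,
\end{equation*}
which follows by expanding $\varrho_i=(1-q^i)(1-k_0^2 q^i)$. Combining with the formula for $X m_{i-1}$ obtained in the odd case, the resulting expression rearranges to $(1-k_0 k_3 q^i X^{-1})m_i=\varrho_i m_{i-1}$, as required.

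For part (ii), the argument is parallel and even shorter. For odd $i$, the identity $t_1 m_i=k_1^{-1}m_i$ gives directly $Y m_i=k_0^{-1}k_1^{-1}q^{-i-1}(m_i-m_{i+1})$, which is the claim. For even $i$ (including $i=0$), I would apply $t_0$ to the expression for $t_1 m_i$; the $m_{i-1}$ cross-terms cancel, the coefficient of $m_i$ simplifies by the same identity as in part (i) (with $k_3$ replaced by $k_1$) to $k_0 k_1 q^i$, and then subtracting $Y m_{i+1}$ (computed from the odd case) yields $Y(m_i-m_{i+1})=k_0 k_1 q^i m_i$, which is the asserted formula.

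No step presents a genuine obstacle; the only nontrivial algebraic manipulation is the collapse $k_0^{-1}q^{-i}\varrho_i+k_0+k_0^{-1}-k_0^{-1}q^{-i}=k_0 q^i$ for even $i$, and the main care required is bookkeeping the cross-term cancellations that arise from combining the odd- and even-indexed pieces of the $t_3$- and $t_1$-actions. The formulas in Proposition \ref{prop:P} are engineered precisely so that these cancellations occur.
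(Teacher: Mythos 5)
Your proposal is correct and follows exactly the route the paper takes: the paper's proof of Lemma \ref{lem:XYinP} is the one-line instruction to evaluate the actions of $X=t_3t_0$ and $Y=t_0t_1$ on the basis $\{m_i\}_{i=0}^\infty$ via Proposition \ref{prop:P}, which is precisely the case-by-case computation you carry out. Your expansions, the cross-term cancellations, and the key collapse $k_0^{-1}q^{-i}\varrho_i+k_0+k_0^{-1}-k_0^{-1}q^{-i}=k_0 q^i$ all check out, so you have simply made explicit what the paper leaves as routine.
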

\begin{proof}
Evaluate the actions of $X$ and $Y$ on $M(k_0,k_1,k_2,k_3)$ by Proposition \ref{prop:P}.
\end{proof}

To state the universal property for $M(k_0,k_1,k_2,k_3)$ we are now going to give an alternative description of $M(k_0,k_1,k_2,k_3)$. We begin with the following lemma.

\begin{lem}
[Proposition 7.8, \cite{DAHA2013}]
\label{lem:basisH}
The elements
$$
Y^i X^j t_0^k
c_0^\ell c_1^r c_2^s c_3^t
$$
for all integers $i,j,\ell,r,s,t$ and $k\in\{0,1\}$ with $\ell,r,s,t\geq 0$ are an $\F$-basis for $\H_q$.
\end{lem}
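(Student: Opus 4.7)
The plan is to split the assertion into a spanning claim and a linear independence claim. The spanning claim should rest on a normal-form reduction procedure applied to arbitrary words in the generators $t_i^{\pm 1}$, while linear independence should be deduced from the explicit $\H_q$-module $M(k_0,k_1,k_2,k_3)$ constructed in Proposition \ref{prop:P} by specializing and varying the parameters $k_0,k_1,k_2,k_3$.

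For spanning, first I would use the relation $t_i^{-1}=c_i-t_i$ (rearranged from $t_i+t_i^{-1}=c_i$) to reduce to words in $t_0,t_1,t_2,t_3,c_0,c_1,c_2,c_3$. Since each $c_i$ is central, every central symbol can be swept past all other factors and collected on the right. Next I would use $t_1=t_0^{-1}Y$ and $t_3=Xt_0^{-1}$, together with $t_2=q^{-1}Y^{-1}t_0X^{-1}$ coming from \eqref{t0t1t2t3}, so that the algebra is generated over its centre by $t_0^{\pm 1},X^{\pm 1},Y^{\pm 1}$. The quadratic relation $t_0^2=c_0t_0-1$ reduces any integer power of $t_0$ to a $Z$-linear combination of $1$ and $t_0$, where $Z=\F[c_0,c_1,c_2,c_3]$, and similarly for $t_1,t_2,t_3$, accounting for the restriction $k\in\{0,1\}$. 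The remaining core step is to establish reordering relations that let one push all $Y$-factors to the left, all $X$-factors next, and a single $t_0^k$ to the right, modulo lower-degree terms times central coefficients. This is a DAHA-type cross-commutation between $X$ and $Y$ (and between each of them and $t_0$), and can be derived either by a direct calculation from $t_0t_1t_2t_3=q^{-1}$ together with the quadratic relations, or by a Bergman diamond-lemma argument applied to the rewriting system.

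For linear independence, I would evaluate a hypothetical $\F$-linear dependence
\begin{equation*}
\sum_{i,j,k,\ell,r,s,t} a_{i,j,k,\ell,r,s,t}\,Y^iX^jt_0^k c_0^\ell c_1^r c_2^s c_3^t=0
\end{equation*}
on the vector $m_0\in M(k_0,k_1,k_2,k_3)$. On this module each $c_\mu$ acts as the scalar $k_\mu+k_\mu^{-1}$, and Lemma \ref{lem:XYinP} combined with Proposition \ref{prop:P}(i) shows that the monomials $Y^iX^jt_0^k m_0$ produce vectors whose $\F$-basis coordinates in $\{m_n\}_{n\geq 0}$ are controlled by explicit rational functions in $k_0,k_1,k_2,k_3,q$. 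Thus the dependence would force
\begin{equation*}
\sum_{\ell,r,s,t} a_{i,j,k,\ell,r,s,t}\,(k_0+k_0^{-1})^\ell(k_1+k_1^{-1})^r(k_2+k_2^{-1})^s(k_3+k_3^{-1})^t=0
\end{equation*}
as a polynomial identity in the four parameters, and since $k_\mu\mapsto k_\mu+k_\mu^{-1}$ is a surjection onto $\F$ as $k_\mu$ varies over $\F^\times$, the inner coefficients must all vanish. This collapses the independence statement to linear independence of the action of $\{Y^iX^jt_0^k\}$ on $m_0$ across generic $(k_0,k_1,k_2,k_3)$, which follows by isolating the top coordinate $m_n$ reached by a given monomial and comparing the associated leading scalars.

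The main obstacle is the spanning step, specifically deriving the explicit reordering identities between $X$, $Y$, and $t_0$ that certify the normal form is reachable from any word in $t_0^{\pm 1},\ldots,t_3^{\pm 1}$. One clean way around this is to invoke the embedding of the affine Hecke subalgebra generated by $t_0,t_1$ (respectively $t_0,t_3$) and bootstrap from its standard PBW-type basis; alternatively, one can simply cite Proposition 7.8 of \cite{DAHA2013}, where the identities are worked out in detail.
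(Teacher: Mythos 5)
The paper offers no proof of this lemma: it is imported wholesale from \cite{DAHA2013} (Proposition 7.8), so your attempt must be judged on its own terms, and on those terms it has a genuine gap. The spanning half is fine in outline (your identities $t_1=t_0^{-1}Y$, $t_3=Xt_0^{-1}$, $t_2=q^{-1}Y^{-1}t_0X^{-1}$ and $t_i^2=c_it_i-1$ are all correct), but the confluence of the rewriting system --- the actual content of a PBW statement --- is deferred to ``a direct calculation or a diamond-lemma argument'' and ultimately to citing the very proposition being proved, which is circular; so nothing is established there.

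The fatal problem is in the linear-independence half. You test a hypothetical dependence only against the vector $m_0$, letting $(k_0,k_1,k_2,k_3)$ vary. But $m_0$ is a simultaneous eigenvector of $t_0$ and $t_3$, hence of $X=t_3t_0$, $c_0$ and $c_3$: one has $X^jt_0^k\,m_0=k_0^{j+k}k_3^{j}\,m_0$, while $c_0,c_3$ act by $k_0+k_0^{-1}$, $k_3+k_3^{-1}$. So the entire $(j,k,\ell,t)$-content of a monomial is recorded only through the scalar function $k_0^{j+k}k_3^{j}(k_0+k_0^{-1})^{\ell}(k_3+k_3^{-1})^{t}$ on $(\F^\times)^2$, and these functions are \emph{not} linearly independent. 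Concretely, set
\begin{equation*}
A \;=\; c_0X - Xt_0 + X^{-1}t_0 - c_3,
\end{equation*}
a nontrivial $\F$-combination of four distinct claimed basis monomials ($Xc_0$, $Xt_0$, $X^{-1}t_0$, $c_3$, all with $i=0$). By Lemma \ref{lem:sim_equation}(i), $A=X^{-1}t_0-t_0X^{-1}=t_0^{-1}t_3^{-1}t_0-t_3^{-1}$, which is nonzero if the lemma is true (otherwise those four monomials would already be dependent), yet $A\,m_0=(k_0k_3^{-1}k_0^{-1}-k_3^{-1})m_0=0$ in $M(k_0,k_1,k_2,k_3)$ for \emph{every} choice of parameters. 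Equivalently, by Theorem \ref{thm:Piso} the kernel of $a\mapsto a\,m_0$ is exactly $I(k_0,k_1,k_2,k_3)$, and $\bigcap I(k_0,k_1,k_2,k_3)\neq 0$. Hence no argument that evaluates only at $m_0$ can prove independence, no matter how the parameters vary; your step ``the dependence forces the inner coefficients to vanish'' silently assumes independence of the scalar functions above, which fails (with $u=k_0$, $v=k_3$: $(u^2v+v)-u^2v+v^{-1}-(v+v^{-1})=0$), and your ``isolate the top coordinate'' step breaks for the same reason, since for fixed $i$ all monomials $Y^iX^jt_0^k m_0$ reach the same top coordinate of $Y^im_0$. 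A repair must test against a faithful family --- e.g.\ all of $M(k_0,k_1,k_2,k_3)$ or the polynomial module $P(k_0,k_1,k_2,k_3)$, using vectors beyond the joint eigenvector $m_0$ --- which is substantially more work than your sketch. (A lesser issue: surjectivity of $k\mapsto k+k^{-1}$ requires $\F$ algebraically closed, a hypothesis the lemma does not carry; for infinite $\F$ a Zariski-density argument would substitute, but this is moot given the main gap.)
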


Let $I(k_0,k_1,k_2,k_3)$ denote the left ideal of $\H_q$ generated by 
\begin{gather}
t_0-k_0,
\qquad 
t_3-k_3,
\label{I1}
\\
c_1-k_1-k_1^{-1},
\qquad 
c_2-k_2-k_2^{-1}.
\label{I2}
\end{gather}

\begin{lem}\label{lem:H/I}
The $\F$-vector space $\H_q/I(k_0,k_1,k_2,k_3)$ is spanned by 
$$
Y^i+I(k_0,k_1,k_2,k_3)
\qquad 
\hbox{for all $i\in \Z$}.
$$
\end{lem}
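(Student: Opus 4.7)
The plan is to use Lemma \ref{lem:basisH} to reduce every basis element of $\H_q$ modulo $I(k_0,k_1,k_2,k_3)$ to a scalar multiple of some $Y^i$. The key observation is that although $I = I(k_0,k_1,k_2,k_3)$ is a left ideal, its generators behave like \emph{right evaluations}: since $t_0-k_0 \in I$ and $I$ is a left ideal, $h(t_0-k_0)\in I$ for every $h\in\H_q$, i.e.
\[
ht_0 \equiv k_0 h \pmod{I} \qquad \text{for all }h\in\H_q,
\]
and similarly $ht_3 \equiv k_3 h \pmod{I}$. Applying the first relation with $h$ replaced by $ht_0^{-1}$ yields $h t_0^{-1}\equiv k_0^{-1}h \pmod{I}$; the analogous statement holds for $t_3^{-1}$.

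Next, I would handle the central generators $c_1-k_1-k_1^{-1}$ and $c_2-k_2-k_2^{-1}$. Since $c_1,c_2$ are central, multiplying them by $h$ from either side stays in $I$, so
\[
h c_1^{r} \equiv (k_1+k_1^{-1})^{r} h, \qquad h c_2^{s} \equiv (k_2+k_2^{-1})^{s} h \pmod{I}.
\]
The corresponding congruences for $c_0$ and $c_3$ are not separate inputs but follow from the preceding step, using $c_0 = t_0+t_0^{-1}$ and $c_3 = t_3+t_3^{-1}$ together with $t_0\equiv k_0, t_0^{-1}\equiv k_0^{-1}$ and $t_3\equiv k_3, t_3^{-1}\equiv k_3^{-1}$ on the right.

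With these tools, $X = t_3 t_0$ gives $hX \equiv k_3(h t_0) \equiv k_0 k_3 h\pmod{I}$, and a short argument of the same shape (apply the relation with $h$ replaced by $hX^{-1}$) extends this to
\[
hX^{j} \equiv (k_0 k_3)^{j} h \pmod{I} \qquad \text{for every } j\in\Z.
\]
Now take an arbitrary basis element $Y^{i}X^{j}t_0^{k}c_0^{\ell}c_1^{r}c_2^{s}c_3^{t}$ from Lemma \ref{lem:basisH}. Successively pushing $c_3^{t}, c_2^{s}, c_1^{r}, c_0^{\ell}$ off the right using centrality, then replacing $t_0^{k}$ ($k\in\{0,1\}$) by $k_0^{k}$, and finally collapsing $X^{j}$ to the scalar $(k_0 k_3)^{j}$, yields a congruence of the form
\[
Y^{i}X^{j}t_0^{k}c_0^{\ell}c_1^{r}c_2^{s}c_3^{t} \equiv \lambda\, Y^{i} \pmod{I}
\]
for an explicit scalar $\lambda\in\F$ depending on $j,k,\ell,r,s,t$.

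Since the displayed monomials form an $\F$-basis of $\H_q$ by Lemma \ref{lem:basisH}, every element of $\H_q/I$ is an $\F$-linear combination of the cosets $Y^i + I$ for $i\in\Z$. I do not see any real obstacle: the only subtlety is the one-sided nature of $I$, which is precisely compensated by the fact that the non-central generators $t_0-k_0$ and $t_3-k_3$ get multiplied from the left (i.e.\ by $h$) when one pushes things to the right, and by centrality for the remaining generators.
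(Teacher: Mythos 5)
Your proposal is correct and follows essentially the same route as the paper: reduce each basis monomial $Y^iX^jt_0^kc_0^\ell c_1^rc_2^sc_3^t$ from Lemma \ref{lem:basisH} modulo the left ideal $I(k_0,k_1,k_2,k_3)$ by right-evaluation, deriving the congruences for $t_0^{\pm 1},t_3^{\pm 1},c_0,c_3$ and hence for $X^{\pm 1}$ from the left-ideal property. One small slip: in your display $hX\equiv k_3(ht_0)$ the intermediate expression is unjustified (you cannot extract $t_3$ from the middle, since $h(t_3-k_3)t_0$ need not lie in the left ideal $I$); one must reduce the rightmost factor first, $ht_3t_0\equiv k_0(ht_3)\equiv k_0k_3h\pmod{I}$, which yields the same (correct) conclusion.
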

\begin{proof}
By Lemma \ref{lem:basisH} the cosets 
$$
Y^i X^j t_0^k
c_0^\ell c_1^r c_2^s c_3^t
+
I(k_0,k_1,k_2,k_3)
$$
for all integers $i,j,\ell,r,s,t$ and $k\in\{0,1\}$ with $\ell,r,s,t\geq 0$ span $\H_q/I(k_0,k_1,k_2,k_3)$. Since $I(k_0,k_1,k_2,k_3)$ contains the elements (\ref{I1}) this yields that
\begin{gather}
c_0-k_0-k_0^{-1},
\qquad 
c_3-k_3-k_3^{-1}
\label{I3}
\end{gather}
are in $I(k_0,k_1,k_2,k_3)$. By (\ref{X}) and since $I(k_0,k_1,k_2,k_3)$ contains (\ref{I1})--(\ref{I3}) it follows that 
\begin{align*}
Y^i+I(k_0,k_1,k_2,k_3)
\qquad 
\hbox{for all $i\in \Z$}
\end{align*}
span $\H_q/I(k_0,k_1,k_2,k_3)$. The lemma follows.
\end{proof}

The $\H_q$-module $M(k_0,k_1,k_2,k_3)$ has the following statement:

\begin{thm}\label{thm:Piso}
There exists a unique $\H_q$-module homomorphism 
$$
\Phi:\H_q/I(k_0,k_1,k_2,k_3)\to M(k_0,k_1,k_2,k_3)
$$ 
that sends $1+I(k_0,k_1,k_2,k_3)$ to $m_0$. Moreover $\Phi$ is an isomorphism.
\end{thm}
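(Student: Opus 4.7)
\medskip

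\noindent\emph{Proof plan.} The strategy is standard: existence/uniqueness of $\Phi$ by the universal property of the cyclic module $\H_q/I(k_0,k_1,k_2,k_3)$, then surjectivity and injectivity via a change of spanning set from $\{Y^j\}_{j\in\Z}$ to a family of Laurent polynomials naturally adapted to the basis $\{m_i\}_{i=0}^\infty$.

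For existence and uniqueness of $\Phi$, I would first note that since $\H_q/I(k_0,k_1,k_2,k_3)$ is generated as an $\H_q$-module by $1+I(k_0,k_1,k_2,k_3)$, the map $\Phi$, if it exists, is uniquely determined by its prescribed value. To produce $\Phi$, it suffices to check that each of the four left-ideal generators in (\ref{I1})--(\ref{I2}) annihilates $m_0$ inside $M(k_0,k_1,k_2,k_3)$. By Proposition \ref{prop:P}(i) we have $t_0m_0=k_0m_0$ and $t_3m_0=k_3m_0$, and by Proposition \ref{prop:P}(ii) the central elements $c_1$ and $c_2$ act as scalar multiplication by $k_1+k_1^{-1}$ and $k_2+k_2^{-1}$ respectively. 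Hence the $\H_q$-linear map $\H_q\to M(k_0,k_1,k_2,k_3)$ sending $h\mapsto h\cdot m_0$ factors through $\H_q/I(k_0,k_1,k_2,k_3)$, yielding $\Phi$.

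For surjectivity, I would iterate Lemma \ref{lem:XYinP}(ii). Define Laurent polynomials recursively by $P_0(Y)=1$ and
\begin{align*}
P_{i+1}(Y)=\bigl(1-k_0k_1q^{2\lceil i/2\rceil}Y^{(-1)^{i-1}}\bigr)P_i(Y)\qquad(i\geq 0).
\end{align*}
A straightforward induction then gives $m_i=P_i(Y)\,m_0$ for every $i\geq 0$, so every basis vector of $M(k_0,k_1,k_2,k_3)$ lies in the image of $\Phi$.

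For injectivity, I would combine Lemma \ref{lem:H/I} with an echelon argument on the $P_i(Y)$. Tracking the recursion, $P_{2k}$ has support $\{-k,\ldots,k\}$ and $P_{2k+1}$ has support $\{-k-1,\ldots,k\}$, and at each step the coefficient of the newly introduced extreme power is a nonzero multiple of the extreme coefficient of the previous polynomial, hence nonzero (using $k_0,k_1,q\neq 0$). Consequently $\mathrm{span}\{P_0,\ldots,P_n\}=\mathrm{span}\{Y^j:-\lceil n/2\rceil\leq j\leq \lceil n/2\rceil\}$ inside $\F[Y,Y^{-1}]$, so as $n\to\infty$ the families $\{P_i(Y)\}_{i\geq 0}$ and $\{Y^j\}_{j\in\Z}$ span the same $\F$-subspace of $\F[Y,Y^{-1}]$. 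By Lemma \ref{lem:H/I} the cosets $\{Y^j+I(k_0,k_1,k_2,k_3)\}_{j\in\Z}$ span $\H_q/I(k_0,k_1,k_2,k_3)$, hence so do $\{P_i(Y)+I(k_0,k_1,k_2,k_3)\}_{i\geq 0}$. Their images under $\Phi$ are the $\F$-basis $\{m_i\}_{i\geq 0}$ of $M(k_0,k_1,k_2,k_3)$, so these cosets are $\F$-linearly independent, hence form a basis of $\H_q/I(k_0,k_1,k_2,k_3)$, and $\Phi$ is an isomorphism. The only genuinely calculational point is the leading-coefficient bookkeeping that produces the basis $\{P_i(Y)\}$ of $\F[Y,Y^{-1}]$; everything else is formal.
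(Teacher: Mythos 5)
Your proposal is correct and takes essentially the same route as the paper: existence and uniqueness via the cyclic generator $1+I(k_0,k_1,k_2,k_3)$ and the relations $t_0m_0=k_0m_0$, $t_3m_0=k_3m_0$, $c_1m_0=(k_1+k_1^{-1})m_0$, $c_2m_0=(k_2+k_2^{-1})m_0$, then bijectivity by showing the cosets $P_i(Y)+I(k_0,k_1,k_2,k_3)$ map to the basis $\{m_i\}_{i=0}^\infty$ and span $\H_q/I(k_0,k_1,k_2,k_3)$ by Lemma \ref{lem:H/I}; your explicit echelon argument merely spells out the spanning step the paper leaves implicit (with one harmless off-by-one: for odd $n$ the span of $P_0,\ldots,P_n$ is $\mathrm{span}\{Y^j:-\lceil n/2\rceil\leq j\leq\lfloor n/2\rfloor\}$, not $\lceil n/2\rceil$).
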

\begin{proof}
Consider the $\H_q$-module homomorphism $\Psi:\H_q\to M(k_0,k_1,k_2,k_3)$ that sends $1$ to $m_0$. By Proposition \ref{prop:P} the elements (\ref{I1}) and (\ref{I2}) are in the kernel of $\Psi$. It follows that $I(k_0,k_1,k_2,k_3)$ is contained in the kernel of $\Psi$. Therefore there exists an $\H_q$-module homomorphism $\H_q/I(k_0,k_1,k_2,k_3)\to M(k_0,k_1,k_2,k_3)$ that sends $1+I(k_0,k_1,k_2,k_3)$ to $m_0$. The existence follows. Since the $\H_q$-module $\H_q/I(k_0,k_1,k_2,k_3)$ is generated by $1+I(k_0,k_1,k_2,k_3)$ the uniqueness follows.

By Lemma \ref{lem:XYinP}(ii) the homomorphism $\Phi$ sends 
\begin{gather}\label{Yi+I}
\prod_{h=0}^{i-1}
(1-k_0 k_1 q^{2\lceil \frac{h}{2}\rceil} Y^{(-1)^{h-1}})
+I(k_0,k_1,k_2,k_3)
\end{gather}
to $m_i$ for all $i=0,1,2,\ldots$. Since $\{m_i\}_{i=0}^\infty$ are linearly independent, the cosets (\ref{Yi+I}) for all $i=0,1,2,\ldots$ are linearly independent. By Lemma \ref{lem:H/I} the cosets  (\ref{Yi+I}) for all $i=0,1,2,\ldots$ span $\H_q/I(k_0,k_1,k_2,k_3)$. Therefore those cosets form an $\F$-basis for $\H_q/I(k_0,k_1,k_2,k_3)$ and it follows that $\Phi$ is an isomorphism. The result follows.
\end{proof}

In light of Theorem \ref{thm:Piso} the $\H_q$-module $M(k_0,k_1,k_2,k_3)$ has the following universal property:

\begin{thm}\label{thm:universal}
If $V$ is an $\H_q$-module which contains a vector $v$ satisfying
\begin{gather*}
t_0 v=k_0 v, 
\qquad 
t_3 v=k_3 v,
\\
c_1 v=(k_1+k_1^{-1})v,
\qquad 
c_2 v=(k_2+k_2^{-1})v,
\end{gather*}
then there exists a unique $\H_q$-module homomorphism $M(k_0,k_1,k_2,k_3)\to V$ that sends $m_0$ to $v$.
\end{thm}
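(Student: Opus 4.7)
The plan is to deduce Theorem \ref{thm:universal} directly from Theorem \ref{thm:Piso} by viewing $M(k_0,k_1,k_2,k_3)$ as the cyclic $\H_q$-module $\H_q/I(k_0,k_1,k_2,k_3)$ and invoking the universal property of quotients of the regular left module.

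First I would consider the unique $\H_q$-module homomorphism $\Lambda:\H_q\to V$ sending $1\mapsto v$, which exists because $\H_q$ is free of rank one as a left module over itself. The next step is to check that every generator of $I(k_0,k_1,k_2,k_3)$ listed in (\ref{I1})--(\ref{I2}) lies in $\ker\Lambda$. This is immediate from the four hypotheses on $v$: $\Lambda(t_0-k_0)=t_0v-k_0v=0$, $\Lambda(t_3-k_3)=t_3v-k_3v=0$, and similarly for $c_1-k_1-k_1^{-1}$ and $c_2-k_2-k_2^{-1}$, using that $c_1$ and $c_2$ are central and hence act by their scalars on any element of the form $av$. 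Since $\ker\Lambda$ is a left ideal containing the generators of $I(k_0,k_1,k_2,k_3)$, it contains $I(k_0,k_1,k_2,k_3)$, so $\Lambda$ factors through an $\H_q$-module homomorphism
$$
\overline{\Lambda}:\H_q/I(k_0,k_1,k_2,k_3)\to V,\qquad 1+I(k_0,k_1,k_2,k_3)\mapsto v.
$$

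For existence I would then compose with the inverse of the isomorphism $\Phi$ from Theorem \ref{thm:Piso}: the composition $\overline{\Lambda}\circ\Phi^{-1}:M(k_0,k_1,k_2,k_3)\to V$ is an $\H_q$-module homomorphism sending $m_0=\Phi(1+I(k_0,k_1,k_2,k_3))$ to $v$, as required. For uniqueness, I would observe that Theorem \ref{thm:Piso} shows $m_0$ generates $M(k_0,k_1,k_2,k_3)$ as an $\H_q$-module (because $1+I(k_0,k_1,k_2,k_3)$ generates $\H_q/I(k_0,k_1,k_2,k_3)$), so any $\H_q$-module homomorphism out of $M(k_0,k_1,k_2,k_3)$ is determined by its value at $m_0$.

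There is no real obstacle here; the whole content of the theorem has already been absorbed into Theorem \ref{thm:Piso}. The only thing to watch is making sure that when verifying $c_1v=(k_1+k_1^{-1})v$ implies $\Lambda(c_1-k_1-k_1^{-1})=0$, one uses the centrality of $c_1$ correctly, but this is trivial since the hypothesis is phrased as an equality of vectors involving the action on $v$ itself.
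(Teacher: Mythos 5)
Your proposal is correct and is essentially the paper's own argument: the paper gives no separate proof of Theorem \ref{thm:universal} beyond the remark that it holds ``in light of Theorem \ref{thm:Piso}'', and the intended deduction is exactly your factorization of the evaluation map $\Lambda:\H_q\to V$, $1\mapsto v$, through $\H_q/I(k_0,k_1,k_2,k_3)$ followed by composition with $\Phi^{-1}$, with uniqueness from the fact that $m_0$ generates $M(k_0,k_1,k_2,k_3)$. One negligible inaccuracy: centrality of $c_1,c_2$ plays no role in checking that the generators (\ref{I1})--(\ref{I2}) lie in $\ker\Lambda$, since the hypotheses on $v$ give this directly, as you yourself note.
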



We finish this section with a comment on the $\H_q$-module $M(k_0,k_1,k_2,k_3)$ and the polynomial representation of $\H_q$. 
Recall that the $\H_q$-module $P(k_0,k_1,k_2,k_3)$ corresponding to the polynomial representation of $\H_q$ \cite{DAHA_book,DAHA&OP_book} is the ring of Laurent polynomials in one variable $z$ on which 
the actions of $t_0,t_1,t_2,t_3$ are as follows:
\begin{eqnarray*}
t_0 : f(z) &\mapsto &
k_0 f(q^2 z^{-1})+\frac{k_0+k_0^{-1}-(k_1+k_1^{-1})q z^{-1}}{1-q^2 z^{-2}}(f(z)-f(q^2 z^{-1})),
\\
t_1 : f(z)
&\mapsto &
\frac{k_1+k_1^{-1}-(k_0+k_0^{-1})q z^{-1}}{1-q^2 z^{-2}}f(z)
+
\frac{k_1+k_1^{-1}-k_0 q z^{-1} - k_0^{-1} q^{-1}z}{1-q^{-2} z^2} f(q^2 z^{-1}),
\\
t_2 : f(z) &\mapsto & 
\frac{k_2+k_2^{-1}-(k_3+k_3^{-1})z}{1-z^2} f(z)
+
\frac{k_3 z+k_3^{-1} z^{-1}-k_2-k_2^{-1}}{1-z^2} f(z^{-1}),
\\
t_3 : f(z) &\mapsto & 
k_3 f(z^{-1})+\frac{k_3+k_3^{-1}-(k_2+k_2^{-1})z}{1-z^2} (f(z)-f(z^{-1})).
\end{eqnarray*}
Observe that $t_0,t_3,c_1,c_2$ map $1$ to $k_0,k_3,k_1+k_1^{-1},k_2+k_2^{-1}$ respectively. It follows from Theorem \ref{thm:universal} that there exists a unique $\H_q$-module homomorphism 
\begin{gather}\label{H->P}
M(k_0,k_1,k_2,k_3)\to P(k_0,k_1,k_2,k_3)
\end{gather}
that sends $m_0$ to $1$. The element $Y$ acts on $P(k_0,k_1,k_2,k_3)$ as multiplication by $q^{-1}z$. By Lemma \ref{lem:XYinP}(ii) the homomorphism (\ref{H->P}) sends $m_i$ to 
\begin{gather}\label{e:Pbasis}
\prod_{h=0}^{i-1}(1-k_0k_1 q^{2\lceil \frac{h}{2}\rceil} q^{(-1)^h} z^{(-1)^{h-1}})
\qquad 
\hbox{for all $i=0,1,2,\ldots$}.
\end{gather}
Since the Laurent polynomials (\ref{e:Pbasis}) form an $\F$-basis for $P(k_0,k_1,k_2,k_3)$ it follows that (\ref{H->P}) is an isomorphism.

\section{Simultaneous eigenvectors of $t_0$ and $t_3$ in $\H_q$-modules}\label{s:eigenvector}

As suggested by Theorem \ref{thm:universal}, we examine the existence of the simultaneous eigenvectors of $t_0$ and $t_3$ in finite-dimensional irreducible $\H_q$-modules and similar issues in this section.

\begin{lem}\label{lem:Schur}
Assume that $\F$ is algebraically closed. If $V$ is a finite-dimensional irreducible $\H_q$-module, then each central element of $\H_q$ acts on $V$ as scalar multiplication.
\end{lem}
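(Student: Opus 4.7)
The plan is to invoke the standard Schur's lemma argument. Let $z$ be a central element of $\H_q$, and consider the $\F$-linear map $\rho(z):V\to V$ giving its action on $V$. First I would verify that $\rho(z)$ is an $\H_q$-module endomorphism of $V$: for any $h\in \H_q$ and $v\in V$, centrality of $z$ gives $\rho(z)(\rho(h)v)=\rho(zh)v=\rho(hz)v=\rho(h)(\rho(z)v)$, so $\rho(z)$ commutes with every $\rho(h)$.

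Next, since $V$ is finite-dimensional over the algebraically closed field $\F$, the operator $\rho(z)$ has at least one eigenvalue $\lambda\in \F$. Let
\[
W=\{v\in V : \rho(z)v=\lambda v\}.
\]
Then $W$ is nonzero. Moreover, for any $h\in \H_q$ and $v\in W$, the commutativity from the previous step yields $\rho(z)(\rho(h)v)=\rho(h)(\rho(z)v)=\lambda\,\rho(h)v$, so $\rho(h)v\in W$. Hence $W$ is an $\H_q$-submodule of $V$.

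Finally, by the irreducibility of $V$, we must have $W=V$, which is exactly the statement that $z$ acts on $V$ as scalar multiplication by $\lambda$. There is no real obstacle here; the argument is entirely standard Schur-type reasoning, and the only mild point to check is the first step that centrality promotes $\rho(z)$ to an endomorphism of $V$ as an $\H_q$-module.
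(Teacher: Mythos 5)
Your proof is correct and matches the paper's approach: the paper's entire proof is ``Apply Schur's lemma to $\H_q$,'' and your argument is precisely the standard unpacking of that (centrality makes the action of $z$ an $\H_q$-endomorphism, algebraic closedness and finite dimension give an eigenvalue, and irreducibility forces the eigenspace to be all of $V$).
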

\begin{proof}
Apply Schur's lemma to $\H_q$.
\end{proof}

\begin{lem}\label{lem:theta}
Let $\mu$ denote a nonzero scalar in $\F$ and 
\begin{gather}\label{theta}
\theta_i
=\left\{
\begin{array}{ll}
\mu q^i
\quad 
&\hbox{if $i$ is even},
\\
\mu^{-1} q^{-i-1}
\quad 
&\hbox{if $i$ is odd}
\end{array}
\right.
\end{gather}
for all $i\in \Z$. Then the following hold:
\begin{enumerate}
\item If $i=j\bmod{2}$ then $\theta_i=\theta_j$ if and only if $q^i=q^j$.

\item If $i\not=j\bmod{2}$ then $\theta_i=\theta_j$ if and only if $\mu^2=q^{-i-j-1}$. 

\item For any $k\in \Z$ either of $\{\theta_i\}_{i=k}^{\infty}$ and $\{\theta_i\}_{i=k}^{-\infty}$ contains infinitely many values provided that $q$ is not a root of unity.
\end{enumerate}
\end{lem}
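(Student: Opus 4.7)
The plan is to treat the three parts essentially by direct computation from the definition \eqref{theta}, with the casework on the parities of $i,j$ being the only bookkeeping issue.

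For part (i), I would split into the two subcases ``both $i,j$ even'' and ``both $i,j$ odd''. In the even case, $\theta_i=\mu q^i$ and $\theta_j=\mu q^j$, so since $\mu\neq 0$ the equality $\theta_i=\theta_j$ reduces at once to $q^i=q^j$. In the odd case, $\theta_i=\mu^{-1}q^{-i-1}$ and $\theta_j=\mu^{-1}q^{-j-1}$, and dividing by $\mu^{-1}$ gives $q^{-i-1}=q^{-j-1}$, which is equivalent to $q^i=q^j$. For part (ii), without loss of generality assume $i$ is even and $j$ is odd (the other case is symmetric under swapping $i$ and $j$). Then $\theta_i=\mu q^i$ and $\theta_j=\mu^{-1}q^{-j-1}$, and $\theta_i=\theta_j$ becomes $\mu q^i=\mu^{-1}q^{-j-1}$, i.e.\ $\mu^2=q^{-i-j-1}$, as required.

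For part (iii), I would deduce everything from part (i) together with the hypothesis that $q$ is not a root of unity. Fix $k\in\Z$ and look just at the even-indexed subsequence $\theta_k,\theta_{k+2},\theta_{k+4},\ldots$ of $\{\theta_i\}_{i=k}^\infty$ (all of whose indices share a common parity with $k$); by part (i), two such terms $\theta_{k+2a}$ and $\theta_{k+2b}$ coincide exactly when $q^{2a}=q^{2b}$, which, since $q$ is not a root of unity, forces $a=b$. Hence this subsequence already consists of infinitely many pairwise distinct values, so the full sequence $\{\theta_i\}_{i=k}^\infty$ contains infinitely many values. The same argument, run with $i$ running down to $-\infty$, handles the backward tail $\{\theta_i\}_{i=k}^{-\infty}$.

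I do not anticipate a genuine obstacle here; the only thing to be careful about is not to inadvertently need collisions \emph{across} parities in part (iii). The point is that restricting to a single parity already gives infinitely many distinct $\theta$'s, so part (ii) plays no role in (iii).
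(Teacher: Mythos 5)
Your proof is correct, and in part (iii) it takes a genuinely different (and slightly leaner) route than the paper. Parts (i) and (ii) coincide with the paper's argument, which simply tabulates $\theta_i-\theta_j$ in the four parity cases --- the same direct computation you carry out, with your ``WLOG $i$ even, $j$ odd'' being harmless since the condition $\mu^2=q^{-i-j-1}$ is symmetric in $i$ and $j$. For (iii), you restrict to the fixed-parity subsequence $\theta_k,\theta_{k+2},\theta_{k+4},\ldots$ (resp.\ $\theta_k,\theta_{k-2},\ldots$ for the backward tail) and invoke only part (i): since $q$ is not a root of unity, $q^{2a}=q^{2b}$ forces $a=b$, so this subsequence is already injective and the tail contains infinitely many values; part (ii) is never needed. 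The paper argues differently: it analyzes all possible collisions in the full family $\{\theta_i\}_{i\in\Z}$, ruling out same-parity collisions by (i), and observing via (ii) that any cross-parity collision forces $\mu^2=q^{-i-j-1}$, so that (again because $q$ is not a root of unity) the sum $i+j$ of any colliding pair equals one fixed constant; hence each value is attained at most twice, which gives (iii) for either tail. The two arguments are comparable in cost; yours has the small advantage of decoupling (iii) from (ii), while the paper's yields the marginally stronger structural fact that each value of $\theta_i$ occurs at most twice over all of $\Z$ --- though only the statement of (iii) as given is used later, in the proof of Proposition \ref{prop:X_eigenvector}. Both proofs are complete.
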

\begin{proof}
(i), (ii): By (\ref{theta}) we have 
\begin{gather*}
\theta_i-\theta_j
=
\left\{
\begin{array}{ll}
\mu(q^i-q^j)
\qquad 
&\hbox{if $i$ and $j$ are even},
\\
\mu^{-1} q^{-1}(q^{-i}-q^{-j})
\qquad 
&\hbox{if $i$ and $j$ are odd},
\\
\mu^{-1} q^{-j-1}(\mu^2 q^{i+j+1}-1)
\qquad 
&\hbox{if $i$ is even and $j$ is odd},
\\
\mu^{-1} q^{-i-1}(1-\mu^2 q^{i+j+1})
\qquad 
&\hbox{if $i$ is odd and $j$ is even}.
\end{array}
\right.
\end{gather*}
Therefore (i) and (ii) follow.

(iii): Suppose that $q$ is not a root of unity. If the scalars $\{\theta_i\}_{i\in \Z}$ are mutually distinct, there is nothing to prove. Assume the contrary. It follows from (i) and (ii) that for any distinct $i,j\in \Z$, the scalars $\theta_i$ and $\theta_j$ are equal whenever $i+j$ is equal to a constant. Therefore (iii) follows.
\end{proof}

Recall the element $X$ of $\H_q$ from  (\ref{X}).

\begin{lem}\label{lem:sim_equation}
The following equations hold in $\H_q$:
\begin{enumerate}
\item $Xt_0-t_0X^{-1}=X c_0-c_3$.

\item
$q^{-1} X^{-1} t_2-q t_2 X 
=q^{-1} X^{-1} c_2- c_1$.
\end{enumerate}
\end{lem}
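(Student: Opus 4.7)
Both identities are purely algebraic consequences of the defining relations of $\H_q$, so the plan is to verify each side by direct expansion using $X=t_3t_0$, $X^{-1}=t_0^{-1}t_3^{-1}$, the definition $c_i=t_i+t_i^{-1}$, and (for part (ii)) the relation $t_0t_1t_2t_3=q^{-1}$. No clever manipulation is required; the only thing to track is that $\H_q$ is noncommutative, so the order of factors matters.

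\textbf{Part (i).} First I would compute the left-hand side: $Xt_0 = t_3t_0^2$ is immediate, and $t_0X^{-1}=t_0\cdot t_0^{-1}t_3^{-1}=t_3^{-1}$, so $Xt_0-t_0X^{-1}=t_3t_0^2-t_3^{-1}$. Then I would compute the right-hand side: $Xc_0=t_3t_0(t_0+t_0^{-1})=t_3t_0^2+t_3$, so $Xc_0-c_3=t_3t_0^2+t_3-(t_3+t_3^{-1})=t_3t_0^2-t_3^{-1}$. The two sides agree, which proves (i).

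\textbf{Part (ii).} The key is to convert everything to a common normal form using \eqref{t0t1t2t3}. From $t_0t_1t_2t_3=q^{-1}$ one obtains the two substitutions $t_3^{-1}=qt_0t_1t_2$ and $t_3^{-1}t_2^{-1}=qt_0t_1$ (equivalently $t_2t_3=q^{-1}t_1^{-1}t_0^{-1}$). Using these I would reduce the left-hand side:
\begin{align*}
q^{-1}X^{-1}t_2 &= q^{-1}t_0^{-1}t_3^{-1}t_2 = q^{-1}t_0^{-1}(qt_0t_1t_2)t_2 = t_1t_2^2,\\
qt_2X &= qt_2t_3t_0 = q(q^{-1}t_1^{-1}t_0^{-1})t_0 = t_1^{-1},
\end{align*}
so the left-hand side equals $t_1t_2^2-t_1^{-1}$. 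For the right-hand side I would expand $q^{-1}X^{-1}c_2=q^{-1}t_0^{-1}t_3^{-1}t_2+q^{-1}t_0^{-1}t_3^{-1}t_2^{-1}$; the first term is $t_1t_2^2$ by the calculation above, and the second is $q^{-1}t_0^{-1}(qt_0t_1)=t_1$. Hence $q^{-1}X^{-1}c_2-c_1=t_1t_2^2+t_1-(t_1+t_1^{-1})=t_1t_2^2-t_1^{-1}$, matching the left-hand side.

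\textbf{Main obstacle.} There is no real obstacle here beyond bookkeeping; the only place one must be careful is in part (ii), where one must apply \eqref{t0t1t2t3} in the correct order to rewrite $t_3^{-1}t_2$ and $t_3^{-1}t_2^{-1}$ without commuting factors that do not commute. Once both sides are brought into the same normal form (all factors pushed to the $t_1,t_2$ letters), the identities become transparent.
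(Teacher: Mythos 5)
Your proof is correct and follows essentially the same route as the paper: a direct verification from the defining relations, with the same key consequence of $t_0t_1t_2t_3=q^{-1}$ (the paper packages it as $t_2X=q^{-1}t_1^{-1}$, which is exactly your substitutions $t_2t_3=q^{-1}t_1^{-1}t_0^{-1}$ and $t_3^{-1}=qt_0t_1t_2$ in disguise). The only cosmetic difference is that you reduce both sides of (ii) all the way to the normal form $t_1t_2^2-t_1^{-1}$, whereas the paper stops at $q^{-1}X^{-1}t_2-t_1^{-1}$; both computations are equivalent and complete.
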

\begin{proof}
(i): Observe that the left-hand side of (i) is equal to 
$$
Xt_0-t_3^{-1}.
$$
Using (\ref{ci}) yields that the right-hand side of (i) is equal to the above. Therefore (i) follows.

(ii): Using (\ref{t0t1t2t3}) yields that
\begin{gather}\label{t2X}
t_2 X=q^{-1} t_1^{-1}.
\end{gather}
By (\ref{t2X}) the left-hand side of (ii) is equal to 
\begin{gather}\label{LHS:t2X}
q^{-1} X^{-1} t_2-t_1^{-1}.
\end{gather} 
By (\ref{ci}) the right-hand side of (ii) is equal to 
\begin{gather}\label{RHS:t2X}
q^{-1} X^{-1} t_2+q^{-1} X^{-1} t_2^{-1}-t_1-t_1^{-1}.
\end{gather} 
Using (\ref{t2X}) the element (\ref{RHS:t2X}) is equal to (\ref{LHS:t2X}). Therefore (ii) follows.
\end{proof}

\begin{prop}\label{prop:X_eigenvector}
Assume that $\F$ is algebraically closed and $q$ is not a root of unity. If $V$ is a finite-dimensional irreducible $\H_q$-module, then the following {\rm (i)} or {\rm (ii)} holds:
\begin{enumerate}
\item $t_3$ and $t_0$ have a simultaneous eigenvector in $V$.

\item $t_1$ and $t_2$ have a simultaneous eigenvector in $V$.
\end{enumerate}
\end{prop}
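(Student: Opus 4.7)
The plan is to exploit the eigenstructure of $X = t_3 t_0$. Since $\F$ is algebraically closed and $\dim V < \infty$, the operator $X$ has some eigenvalue $\mu \in \F^\times$ on $V$. I feed this $\mu$ into Lemma \ref{lem:theta} to obtain the two-sided sequence $\{\theta_i\}_{i\in\Z}$ defined by (\ref{theta}). By Lemma \ref{lem:Schur}, each central element $c_r$ acts on $V$ as a scalar, so I treat $c_0,c_1,c_2,c_3$ as elements of $\F$ throughout. Set $I = \{i\in\Z : \theta_i \text{ is an eigenvalue of } X \text{ on } V\}$. The first step is to check that $I$ is finite and nonempty: $0\in I$ since $\theta_0=\mu$; finiteness follows because the spectrum of $X$ on $V$ is finite and, by Lemma \ref{lem:theta}(i)--(iii) together with the assumption that $q$ is not a root of unity, each value in that spectrum coincides with $\theta_i$ for only finitely many $i$.

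Write $i_+ = \max I$ and fix a nonzero $v$ in the $X$-eigenspace $V_{\theta_{i_+}}$. The argument now splits on the parity of $i_+$. Suppose first that $i_+$ is odd. Applying Lemma \ref{lem:sim_equation}(i) to $v$ gives
\[
(X - \theta_{i_+}^{-1})(t_0 v) \;=\; (\theta_{i_+} c_0 - c_3)\, v,
\]
so the correction $w := t_0 v - \alpha v$ with $\alpha = (c_3 - \theta_{i_+} c_0)/(\theta_{i_+}^{-1} - \theta_{i_+})$ is a genuine $X$-eigenvector with eigenvalue $\theta_{i_+}^{-1} = \theta_{i_+ + 1}$. (The identity $\theta_j \theta_{j+1}=1$ for $j$ odd, read off from (\ref{theta}), is what converts $\theta_{i_+}^{-1}$ to $\theta_{i_+ + 1}$.) The scalar $\alpha$ is well-defined because the denominator vanishes only when $\theta_{i_+}^2 = 1$, which forces $\theta_{i_+ + 1} = \theta_{i_+}$ and so puts $i_+ + 1$ into $I$, contradicting maximality. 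Since $i_+ + 1 \notin I$, we have $V_{\theta_{i_+ + 1}} = 0$, hence $w = 0$ and $t_0 v = \alpha v$. Invertibility of $t_0$ gives $\alpha \neq 0$, and then $X = t_3 t_0$ yields $t_3 v = \theta_{i_+} \alpha^{-1} v$. Thus $v$ is a simultaneous eigenvector of $t_0$ and $t_3$, proving (i).

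If instead $i_+$ is even, the analogous argument uses Lemma \ref{lem:sim_equation}(ii) and the companion identity $\theta_j \theta_{j+1} = q^{-2}$ for $j$ even. One solves for a scalar $\gamma$ with $(X - q^{-2}\theta_{i_+}^{-1})(t_2 v - \gamma v)=0$; well-definedness of $\gamma$ requires $\theta_{i_+}^2 \neq q^{-2}$, again equivalent to $\theta_{i_+ + 1} \neq \theta_{i_+}$ by maximality. Since $q^{-2}\theta_{i_+}^{-1} = \theta_{i_+ + 1}$ and $V_{\theta_{i_+ + 1}} = 0$, we conclude $t_2 v = \gamma v$ with $\gamma \neq 0$. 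Finally, $t_0 t_1 t_2 t_3 = q^{-1}$ rearranges to $t_1 t_2 = q^{-1} X^{-1}$, which applied to $v$ gives $t_1 v = q^{-1} \theta_{i_+}^{-1} \gamma^{-1} v$. So $v$ is a simultaneous eigenvector of $t_1$ and $t_2$, proving (ii).

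The main step to watch carefully is matching the two algebraic degeneracies — the vanishing of the denominators $\theta_{i_+}^{-1} - \theta_{i_+}$ and $\theta_{i_+}^{-1} - q^2 \theta_{i_+}$ — to exactly the situation where $\theta_{i_+}$ and $\theta_{i_+ + 1}$ coincide, which is ruled out by $i_+ = \max I$. This is the one place where the precise form of (\ref{theta}) and the hypothesis that $q$ is not a root of unity are both indispensable; everything else is linear algebra on the $X$-eigenspace at the top of the $\theta$-ladder.
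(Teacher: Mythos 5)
Your proposal is correct and is essentially the paper's own argument: both exploit the $\theta$-ladder of eigenvalues of $X$ from Lemma \ref{lem:theta}, locate a boundary index where one rung is an eigenvalue but its neighbor is not, and apply Lemma \ref{lem:sim_equation} together with Lemma \ref{lem:Schur} to force $t_0$ (resp.\ $t_2$) to preserve that boundary eigenspace, finishing via $X=t_3t_0$ and $X^{-1}=qt_1t_2$. The only differences are cosmetic: you take the top of the ladder $i_+=\max I$ (using finiteness of $I$, where parts (i)--(ii) of Lemma \ref{lem:theta} already suffice), whereas the paper descends from $0$ to find $k\le 0$ with $\theta_k$ an eigenvalue and $\theta_{k-1}$ not --- which swaps the parity-to-case assignment --- and your choice lets you read off the $t_0$- (resp.\ $t_2$-)eigenvalue on $v$ directly, since $\theta_{i_+}\ne\theta_{i_++1}$ is automatic from maximality, instead of the paper's step of passing to an eigenvector of $t_0$ inside the invariant eigenspace.
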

\begin{proof}
Since $\F$ is algebraically closed and $V$ is finite-dimensional, there exists an eigenvalue $\mu$ of $X$ in $V$. Since $X$ is invertible in $\H_q$ the scalar $\mu$ is nonzero. Consider the corresponding scalars $\{\theta_i\}_{i\in \Z}$ given in (\ref{theta}). 
By Lemma \ref{lem:theta}(iii) there are infinitely many values among $\{\theta_i\}_{i=0}^{-\infty}$. Since $V$ is finite-dimensional there exists an integer $k\leq 0$ such that $\theta_{k-1}$ is not an eigenvalue of $X$ but $\theta_k$ is an eigenvalue of $X$ in $V$. Let $W$ denote the $\theta_k$-eigenspace of $X$ in $V$.

Suppose that $k$ is even. 
Pick any $v\in W$. Note that $\theta_k^{-1}=\theta_{k-1}$.  Applying $v$ to either side of Lemma \ref{lem:sim_equation}(i) yields that 
\begin{gather}\label{Xt0v}
(X-\theta_{k-1}) t_0 v= ( \theta_k c_0 - c_3) v.
\end{gather}
By Lemma \ref{lem:Schur} the right-hand side of (\ref{Xt0v}) is a scalar multiple of $v$.  Left multiplying either side of (\ref{Xt0v})  by $X-\theta_k$ yields that 
$$
(X-\theta_{k-1})(X-\theta_k)t_0v=0.
$$
Since $\theta_{k-1}$ is not an eigenvalue of $X$ in $V$  it follows that $(X-\theta_k)t_0v=0$ and hence $t_0 v\in W$. This shows that $W$ is $t_0$-invariant. Since $\F$ is algebraically closed there exists an eigenvector $w$ of $t_0$ in  $W$. Since $X=t_3t_0$ and $t_0$ is invertible in $\H_q$ it follows that $w$ is also an eigenvector of $t_3$. Therefore (i) follows.

Suppose that $k$ is odd. Pick any $v\in W$.  Note that $\theta_k=q^{-2} \theta_{k-1}^{-1}$. Applying $v$ to Lemma \ref{lem:sim_equation}(ii) yields that 
\begin{gather}\label{X-1t2v}
q^{-1} (X^{-1}-\theta_{k-1}^{-1}) t_2 v
=(q^{-1}\theta_k^{-1} c_2-c_1) v.
\end{gather}
By Lemma \ref{lem:Schur} the right-hand side of (\ref{X-1t2v}) is a scalar multiple of $v$. 
Left multiplying either side of (\ref{X-1t2v})  by $X-\theta_k$ yields that 
$$
(X^{-1}-\theta_{k-1}^{-1})(X-\theta_k)t_2v=0.
$$
Since $\theta_{k-1}^{-1}$ is not an eigenvalue of $X^{-1}$ in $V$ it follows that $(X-\theta_k)t_2v=0$ and hence $t_2 v\in W$. This shows that $W$ is $t_2$-invariant. Since $\F$ is algebraically closed there exists an eigenvector $w$ of $t_2$ in $W$. Since $X^{-1}=q t_1 t_2$ by (\ref{t0t1t2t3}) and $t_2$ is invertible in $\H_q$ it follows that $w$ is also an eigenvector of $t_1$. Therefore (ii) follows. 
\end{proof}

\begin{prop}\label{prop:Y_eigenvector}
Assume that $\F$ is algebraically closed and $q$ is not a root of unity. If $V$ is a finite-dimensional irreducible $\H_q$-module, then the following {\rm (i)} or {\rm (ii)} holds:
\begin{enumerate}
\item $t_0$ and $t_1$ have a simultaneous eigenvector in $V$.

\item $t_2$ and $t_3$ have a simultaneous eigenvector in $V$.
\end{enumerate}
\end{prop}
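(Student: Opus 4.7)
The plan is to mirror the proof of Proposition \ref{prop:X_eigenvector} with $X=t_3 t_0$ replaced throughout by $Y=t_0 t_1$. First I would derive $Y$-analogs of Lemma \ref{lem:sim_equation}, namely
\[
Y t_1 - t_1 Y^{-1} = Y c_1 - c_0, \qquad q^{-1} Y^{-1} t_3 - q t_3 Y = q^{-1} Y^{-1} c_3 - c_2.
\]
The first follows by expanding $Y t_1 = t_0 t_1^2 = t_0(c_1 t_1 - 1) = Y c_1 - t_0$ (using centrality of $c_1$) together with $t_1 Y^{-1} = t_0^{-1}$. The second uses the consequences $Y^{-1} = q t_2 t_3$ and $q t_3 Y = t_2^{-1}$ of $t_0 t_1 t_2 t_3 = q^{-1}$, in exactly the same spirit as Lemma \ref{lem:sim_equation}(ii).

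With these identities in hand, I would fix a nonzero eigenvalue $\mu$ of $Y$ on $V$ (which exists since $\F$ is algebraically closed and $V$ is finite-dimensional; nonzero because $Y$ is invertible in $\H_q$), form the sequence $\{\theta_i\}$ from (\ref{theta}), and invoke Lemma \ref{lem:theta}(iii) together with finite-dimensionality of $V$ to produce an integer $k\le 0$ such that $\theta_k$ is an eigenvalue of $Y$ on $V$ but $\theta_{k-1}$ is not. Let $W$ denote the $\theta_k$-eigenspace of $Y$.

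If $k$ is even, then $\theta_k^{-1} = \theta_{k-1}$; applying the first new identity to any $v \in W$ and using Lemma \ref{lem:Schur} yields $(Y - \theta_{k-1}) t_1 v \in \F v$. Left-multiplying by $Y - \theta_k$ (which annihilates $v$ and commutes with $Y - \theta_{k-1}$) and noting that $Y - \theta_{k-1}$ is invertible on $V$ shows $(Y - \theta_k) t_1 v = 0$, so $W$ is $t_1$-invariant. A $t_1$-eigenvector in $W$ (existing by algebraic closure) is then simultaneously an eigenvector of $Y = t_0 t_1$, hence of $t_0$ since $t_1$ is a unit, giving (i). If $k$ is odd, the same template applied to the second new identity shows $W$ is $t_3$-invariant, so a $t_3$-eigenvector $w \in W$ is an eigenvector of $Y^{-1} = q t_2 t_3$, and hence of $t_2$, giving (ii).

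I do not foresee any real obstacle: once the two new commutation identities are verified, the argument is a direct template-adaptation of the proof of Proposition \ref{prop:X_eigenvector}, with all the substantive content carried by Lemmas \ref{lem:Schur} and \ref{lem:theta}. A cleaner alternative worth mentioning is to exploit the $\Z/4\Z$-action of Table \ref{Z/4Z-action}: since $\e = 3 \bmod 4$ is an algebra automorphism of $\H_q$ sending $(t_3,t_0) \mapsto (t_2,t_3)$ and $(t_1,t_2) \mapsto (t_0,t_1)$, the twisted module $V^\e$ is again finite-dimensional irreducible, and Proposition \ref{prop:X_eigenvector} applied to $V^\e$ unwinds directly to the two conclusions of the present proposition, bypassing the explicit computation of the two new identities.
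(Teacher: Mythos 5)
Your proposal is correct, and it actually contains the paper's proof as its closing remark: the paper disposes of this proposition in one line, by applying Proposition \ref{prop:X_eigenvector} to the twisted module $V^{1\bmod 4}$, under which the pairs $(t_3,t_0)$ and $(t_1,t_2)$ of that proposition pull back to $(t_0,t_1)$ and $(t_2,t_3)$ respectively. Your suggested twist by $3\bmod 4$ is the inverse automorphism and works equally well --- the two cases of Proposition \ref{prop:X_eigenvector} then yield conclusions (i) and (ii) in the opposite pairing, but since the statement is a symmetric disjunction nothing is lost. Your primary, computational route is a genuinely self-contained alternative, and I checked it goes through: both new identities are correct (from $t_1^2=c_1t_1-1$ and $t_1Y^{-1}=t_0^{-1}$ one gets $Yt_1-t_1Y^{-1}=Yc_1-c_0$, and the relations $Y^{-1}=qt_2t_3$, $qt_3Y=t_2^{-1}$, which do follow from $t_0t_1t_2t_3=q^{-1}$, give the second identity by exactly the manipulation of Lemma \ref{lem:sim_equation}(ii), since $q^{-1}Y^{-1}t_3^{-1}=t_2$); the eigenvalue-ladder argument then transfers verbatim because the facts it uses --- $\theta_k^{-1}=\theta_{k-1}$ for $k$ even, $\theta_k=q^{-2}\theta_{k-1}^{-1}$ for $k$ odd, and Lemma \ref{lem:theta}(iii) --- are properties of the sequence (\ref{theta}) alone, independent of whether $X$ or $Y$ supplies the eigenvalue $\mu$. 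What the direct route buys is independence from the $\Z/4\Z$-action of Table \ref{Z/4Z-action} and an explicit pair of commutation identities that may be of separate use; what it costs is redoing the ladder argument, which the paper avoids entirely by the twist.
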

\begin{proof}
The proposition follows by applying Proposition \ref{prop:X_eigenvector} to the $\H_q$-module $V^{1\bmod{4}}$.
\end{proof}

\section{Proof of Theorem \ref{thm:iso}}\label{section:iso}

Throughout this section we use the following conventions: 
Let $d\geq 1$ denote an odd integer. Assume that $k_0^2=q^{-d-1}$. 
Let $\{v_i\}_{i=0}^d$ denote the $\F$-basis for the $\H_q$-module $E(k_0,k_1,k_2,k_3)$ from Proposition \ref{prop:E}(i).
Define 
$$
N(k_0,k_1,k_2,k_3)
$$
to be the $\H_q$-submodule of $M(k_0,k_1,k_2,k_3)$ generated by $m_{d+1}$.

\begin{lem}\label{lem:XYinE}
\begin{enumerate}
\item The action of $X$ on $E(k_0,k_1,k_2,k_3)$ is as follows:
\begin{align*}
(1-k_0 k_3 q^{2\lceil \frac{i}{2}\rceil}X^{(-1)^{i-1}}) v_i
=
\left\{
\begin{array}{ll}
0
\qquad 
&\hbox{if $i=0$},
\\
\varrho_i
v_{i-1}
\qquad 
&\hbox{if $i=1,2,\ldots,d$}.
\end{array}
\right.
\end{align*}

\item The action of $Y$ on $E(k_0,k_1,k_2,k_3)$ is as follows:
\begin{align*}
(1-k_0 k_1 q^{2\lceil \frac{i}{2}\rceil} Y^{(-1)^{i-1}}) v_i
=
\left\{
\begin{array}{ll}
0
\qquad 
&\hbox{if $i=d$},
\\
v_{i+1}
\qquad 
&\hbox{if $i=0,1,\ldots,d-1$}.
\end{array}
\right.
\end{align*}
\end{enumerate}
\end{lem}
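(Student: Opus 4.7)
The strategy is identical in spirit to Lemma \ref{lem:XYinP}: simply evaluate $X = t_3 t_0$ and $Y = t_0 t_1$ directly on the basis vectors $\{v_i\}_{i=0}^d$ using the explicit action formulas in Proposition \ref{prop:E}(i). For indices $1 \le i \le d-1$, the action of $t_0, t_1, t_3$ on $v_i$ agrees formula-for-formula with the action on $m_i$ in Proposition \ref{prop:P}, so the required identities follow by exactly the computation already carried out for $M(k_0,k_1,k_2,k_3)$. Consequently the only new content is the verification at the two boundary indices $i = 0$ and $i = d$, where $E(k_0,k_1,k_2,k_3)$ departs from $M(k_0,k_1,k_2,k_3)$ and where the truncation relation $k_0^2 = q^{-d-1}$ plays its role.

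For $i = 0$, one has $t_0 v_0 = k_0 v_0$ and $t_1 v_0 = k_1 v_0 + k_1^{-1} v_1$, so $Y v_0 = k_0 k_1 v_0 + k_0 k_1^{-1} v_1$, and a direct computation of $(1 - k_0 k_1 Y^{-1}) v_0$ (using $\lceil 0/2\rceil = 0$, $(-1)^{-1} = -1$) yields $v_1$. Similarly $t_3 v_0 = k_3 v_0$ gives $X v_0 = k_0 k_3 v_0$, so $(1 - k_0 k_3 X^{-1}) v_0 = 0$. Both (i) and (ii) at $i=0$ are immediate.

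For $i = d$ (with $d$ odd), I would use the special boundary formulas $t_0 v_d = k_0 v_d$ and $t_1 v_d = k_1^{-1} v_d$, giving $Y v_d = k_0 k_1^{-1} v_d$, and then
\[
(1 - k_0 k_1 q^{d+1} Y) v_d = v_d - k_0^2 q^{d+1} v_d = v_d - q^{-d-1} q^{d+1} v_d = 0,
\]
which is precisely where $k_0^2 = q^{-d-1}$ is used. For $X$ at $i = d$, the formula $t_3 v_d = -k_3^{-1} \varrho_d v_{d-1} + k_3^{-1} v_d$ (where $\varrho_d$ matches (\ref{varphi}) for odd $d$) gives $X v_d = -k_0 k_3^{-1} \varrho_d v_{d-1} + k_0 k_3^{-1} v_d$, and substituting into $(1 - k_0 k_3 q^{d+1} X) v_d$ produces $\varrho_d v_{d-1}$ after the same cancellation $k_0^2 q^{d+1} = 1$.

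The remaining interior verifications are bookkeeping: for each parity of $i$ one checks that the coefficient of $v_{i-1}$ on the right-hand side of (i) matches $\varrho_i$ (via (\ref{varphi})), and that the coefficient of $v_{i+1}$ in (ii) is $1$, after expanding $t_3 t_0 v_i$ and $t_0 t_1 v_i$. There is no real obstacle: the only place that could fail is precisely at $i = d$, and the hypothesis $k_0^2 = q^{-d-1}$ is calibrated to make it succeed. The main "obstacle", such as it is, is therefore purely organizational — one should split into parities of $i$ and carefully track which branch of the piecewise formulas in Proposition \ref{prop:E} applies, and then invoke Lemma \ref{lem:XYinP} for the interior indices so as not to repeat the algebra.
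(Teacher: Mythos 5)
Your proposal is correct and follows essentially the same route as the paper, whose entire proof of Lemma \ref{lem:XYinE} is the instruction to evaluate $X=t_3t_0$ and $Y=t_0t_1$ on the basis of Proposition \ref{prop:E}(i). Your reduction of the interior indices to Lemma \ref{lem:XYinP} is a legitimate shortcut: for $1\le i\le d-1$ the formulas of Propositions \ref{prop:E} and \ref{prop:P} coincide (and the chains of computation that touch $v_d$ only involve $t_1$, whose action $t_1v_d=k_1^{-1}v_d$ also agrees with $M(k_0,k_1,k_2,k_3)$), so the boundary cases $i=0$ and $i=d$, where the truncation $k_0^2=q^{-d-1}$ enters, are indeed the only new verifications.

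One slip worth flagging: your intermediate claim $Y v_0 = k_0k_1 v_0 + k_0 k_1^{-1} v_1$ is false for $d\ge 3$, since $t_0 v_1 = k_0^{-1}q^{-2}(v_1-v_2)$ rather than $k_0 v_1$; it holds only when $d=1$. The slip is harmless because the $i=0$ case of part (ii) involves $Y^{-1}$, not $Y$: one has $t_0^{-1}v_0=k_0^{-1}v_0$ and, using that $c_1$ acts as $k_1+k_1^{-1}$, $t_1^{-1}v_0=k_1^{-1}(v_0-v_1)$, whence $Y^{-1}v_0=k_0^{-1}k_1^{-1}(v_0-v_1)$ and $(1-k_0k_1Y^{-1})v_0=v_1$ exactly as you conclude. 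With that formula corrected (or simply deleted), your argument is complete.
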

\begin{proof}
Recall $X$ and $Y$ from (\ref{X}) and (\ref{Y}). Evaluate the actions of $X$ and $Y$ on $E(k_0,k_1,k_2,k_3)$ by Proposition \ref{prop:E}.
\end{proof}

\begin{lem}\label{lem:N_E}
$\{m_i\}_{i=d+1}^\infty$ is an $\F$-basis for $N(k_0,k_1,k_2,k_3)$.
\end{lem}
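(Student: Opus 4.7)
The plan is to prove the two inclusions separately, the engine in both cases being the identity $k_0^2 q^{d+1}=1$, which is equivalent to the hypothesis $k_0^2=q^{-d-1}$ and causes the coefficient $\varrho_{d+1}=(1-q^{d+1})(1-k_0^2q^{d+1})$ to vanish. Note also that $d$ is odd, so $d+1$ is even.

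First I would show $\mathrm{span}_\F\{m_i\}_{i=d+1}^\infty\subseteq N(k_0,k_1,k_2,k_3)$. For every $i\geq d+1$, applying Lemma \ref{lem:XYinP}(ii) iteratively starting at $m_{d+1}$ exhibits $m_i$ as the image of $m_{d+1}$ under a polynomial in $Y^{\pm 1}$, hence places $m_i$ in the submodule generated by $m_{d+1}$.

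Next I would show the reverse inclusion $N(k_0,k_1,k_2,k_3)\subseteq\mathrm{span}_\F\{m_i\}_{i=d+1}^\infty$. For this it suffices to prove that $W:=\mathrm{span}_\F\{m_i\}_{i=d+1}^\infty$ is stable under the generators $\{t_j^{\pm 1}\}_{j=0}^3$. Consulting the formulas in Proposition \ref{prop:P}: for every $i\geq d+2$, each of $t_0m_i,t_1m_i,t_2m_i,t_3m_i$ is an $\F$-linear combination of $m_{i-1},m_i,m_{i+1}$, all of which lie in $W$ (since $i-1\geq d+1$). The only delicate case is $i=d+1$, where potentially an $m_d$ term could appear: from the formulas, $t_2m_{d+1}$ and $t_3m_{d+1}$ produce no $m_d$ term by inspection (since $d+1$ is even), while $t_0m_{d+1}$ and $t_1m_{d+1}$ each contain an $m_d$ coefficient proportional to $(1-q^{d+1})(1-k_0^2q^{d+1})=\varrho_{d+1}=0$. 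Hence $t_jm_{d+1}\in W$ for $j=0,1,2,3$, and so $t_jW\subseteq W$. Stability under $t_j^{-1}$ then follows automatically from Proposition \ref{prop:P}(ii): since $c_j$ acts on $M(k_0,k_1,k_2,k_3)$ as the scalar $k_j+k_j^{-1}$, we have $t_j^{-1}=(k_j+k_j^{-1})\cdot\mathrm{id}-t_j$ on $M(k_0,k_1,k_2,k_3)$, which preserves $W$. Thus $W$ is an $\H_q$-submodule containing $m_{d+1}$, whence $N(k_0,k_1,k_2,k_3)\subseteq W$.

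Combining the two inclusions gives $N(k_0,k_1,k_2,k_3)=W$, and since $\{m_i\}_{i=d+1}^\infty$ is a subset of the $\F$-basis $\{m_i\}_{i=0}^\infty$ of $M(k_0,k_1,k_2,k_3)$, it is linearly independent; hence it is an $\F$-basis for $N(k_0,k_1,k_2,k_3)$. The only conceptual step is the vanishing of $\varrho_{d+1}$ at the interface $i=d+1$; the rest is bookkeeping with the explicit formulas and the central-scalar trick for handling $t_j^{-1}$.
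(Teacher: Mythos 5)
Your proof is correct and is essentially the paper's argument written out in full: the paper's one-line proof simply cites Proposition \ref{prop:P}, and what it implicitly relies on is exactly your observation that $k_0^2=q^{-d-1}$ kills the $m_d$-coefficient $(1-q^{d+1})(1-k_0^2q^{d+1})$ in $t_0m_{d+1}$ and $t_1m_{d+1}$ (with $d+1$ even), making $\mathrm{span}_\F\{m_i\}_{i=d+1}^\infty$ a submodule that is generated by $m_{d+1}$ via Lemma \ref{lem:XYinP}(ii). Your handling of $t_j^{-1}$ via $t_j^{-1}=(k_j+k_j^{-1})\cdot\mathrm{id}-t_j$ and the linear-independence remark are both valid, so nothing is missing.
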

\begin{proof}
Since $d$ is odd and $k_0^2=q^{-d-1}$ it follows from Proposition \ref{prop:P} that $N(k_0,k_1,k_2,k_3)$ has the $\F$-basis $\{m_i\}_{i=d+1}^\infty$.
\end{proof}

\begin{lem}\label{lem:E}
There exists a unique $\H_q$-module isomorphism 
$$
M(k_0,k_1,k_2,k_3)/N(k_0,k_1,k_2,k_3)\to E(k_0,k_1,k_2,k_3)
$$ 
that sends $m_i+N(k_0,k_1,k_2,k_3)$ to $v_i$ for all $i=0,1,\ldots,d$.
\end{lem}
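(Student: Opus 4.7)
The plan is to build the isomorphism by invoking the universal property of $M(k_0,k_1,k_2,k_3)$ (Theorem \ref{thm:universal}), then factoring through the submodule $N(k_0,k_1,k_2,k_3)$, and finally using a dimension count.

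First I would verify that the vector $v_0 \in E(k_0,k_1,k_2,k_3)$ satisfies the hypotheses of Theorem \ref{thm:universal}: from Proposition \ref{prop:E}(i) we have $t_0 v_0 = k_0 v_0$ and $t_3 v_0 = k_3 v_0$, and by Proposition \ref{prop:E}(ii) the central elements $c_1$ and $c_2$ act as $k_1+k_1^{-1}$ and $k_2+k_2^{-1}$ respectively. Hence Theorem \ref{thm:universal} yields a unique $\H_q$-module homomorphism $\Phi : M(k_0,k_1,k_2,k_3) \to E(k_0,k_1,k_2,k_3)$ with $\Phi(m_0)=v_0$.

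Next I would show by induction on $i$ that $\Phi(m_i) = v_i$ for $i = 0, 1, \ldots, d$. The key point is that Lemma \ref{lem:XYinP}(ii) and Lemma \ref{lem:XYinE}(ii) give the same recurrence $(1 - k_0 k_1 q^{2\lceil i/2\rceil} Y^{(-1)^{i-1}})$ acting on the $i$th basis vector produces the $(i+1)$st, so that $\Phi$ preserves the recursion up to index $d$. Then, applying Lemma \ref{lem:XYinP}(ii) with $i = d$ and Lemma \ref{lem:XYinE}(ii) with $i = d$, we find $\Phi(m_{d+1}) = \Phi\bigl((1-k_0k_1 q^{d+1} Y^{(-1)^{d-1}}) m_d\bigr) = (1-k_0k_1 q^{d+1} Y^{(-1)^{d-1}}) v_d = 0$. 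Since $N(k_0,k_1,k_2,k_3)$ is by definition the $\H_q$-submodule generated by $m_{d+1}$, it follows that $N(k_0,k_1,k_2,k_3) \subseteq \ker \Phi$, and $\Phi$ factors through a well-defined $\H_q$-module homomorphism
\[
\bar\Phi : M(k_0,k_1,k_2,k_3)/N(k_0,k_1,k_2,k_3) \to E(k_0,k_1,k_2,k_3)
\]
sending $m_i + N(k_0,k_1,k_2,k_3) \mapsto v_i$ for all $i = 0, 1, \ldots, d$.

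Finally I would conclude that $\bar\Phi$ is an isomorphism. By Lemma \ref{lem:N_E}, $\{m_i\}_{i=d+1}^\infty$ is an $\F$-basis for $N(k_0,k_1,k_2,k_3)$, so $\{m_i + N(k_0,k_1,k_2,k_3)\}_{i=0}^d$ is an $\F$-basis for the quotient, which therefore has dimension $d+1$. Since $\{v_i\}_{i=0}^d$ is a basis of $E(k_0,k_1,k_2,k_3)$, $\bar\Phi$ sends a basis to a basis and is an isomorphism. Uniqueness is immediate from the fact that $M(k_0,k_1,k_2,k_3)/N(k_0,k_1,k_2,k_3)$ is generated as an $\H_q$-module by $m_0 + N(k_0,k_1,k_2,k_3)$. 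There is essentially no hard step here: all the real work has already been done in establishing Theorem \ref{thm:universal} and Lemmas \ref{lem:XYinP}, \ref{lem:XYinE}, \ref{lem:N_E}; the only thing to be careful about is matching the recursions in Lemmas \ref{lem:XYinP}(ii) and \ref{lem:XYinE}(ii) at index $i=d$ so that the $(d+1)$st basis vector of $M$ is sent to zero.
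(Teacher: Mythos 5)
Your proposal is correct and follows essentially the same route as the paper's proof: invoke Theorem \ref{thm:universal} at the vector $v_0$, match the recurrences of Lemma \ref{lem:XYinP}(ii) and Lemma \ref{lem:XYinE}(ii) to see that $m_i\mapsto v_i$ for $i=0,1,\ldots,d$ and $m_{d+1}\mapsto 0$, factor through $N(k_0,k_1,k_2,k_3)$, and conclude via Lemma \ref{lem:N_E} that the induced map carries a basis to a basis. Your explicit check at index $i=d$ (where $2\lceil d/2\rceil=d+1$ since $d$ is odd) is exactly the point the paper's phrase ``comparing Lemma \ref{lem:XYinP}(ii) with Lemma \ref{lem:XYinE}(ii)'' compresses.
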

\begin{proof}
By Proposition \ref{prop:E} the vector $v_0$ satisfies 
\begin{gather*}
t_0 v_0=k_0 v_0, 
\qquad 
t_3 v_0 =k_3 v_0,
\\
c_1 v_0 =(k_1+k_1^{-1})v_0,
\qquad 
c_2 v_0 =(k_2+k_2^{-1})v_0.
\end{gather*}
Hence there exists a unique $\H_q$-module homomorphism 
\begin{gather}\label{M->E}
M(k_0,k_1,k_2,k_3)\to E(k_0,k_1,k_2,k_3)
\end{gather}
that sends $m_0$ to $v_0$ by Theorem \ref{thm:universal}. 
Comparing Lemma \ref{lem:XYinP}(ii) with Lemma \ref{lem:XYinE}(ii) the image of $m_i$ under (\ref{M->E}) is $v_i$ for all $i=0,1,\ldots,d$ and the image of $m_{d+1}$ under (\ref{M->E}) is zero. Hence $N(k_0,k_1,k_2,k_3)$ is contained in the kernel of (\ref{M->E}). Therefore (\ref{M->E}) induces the $\H_q$-module homomorphism 
\begin{gather}\label{M/N->E}
M(k_0,k_1,k_2,k_3)/N(k_0,k_1,k_2,k_3)\to E(k_0,k_1,k_2,k_3)
\end{gather}
that sends $m_i+N(k_0,k_1,k_2,k_3)$ to $v_i$ for all $i=0,1,\ldots,d$. By Lemma \ref{lem:N_E} the cosets $\{m_i+N(k_0,k_1,k_2,k_3)\}_{i=0}^d$ form an $\F$-basis for $M(k_0,k_1,k_2,k_3)/N(k_0,k_1,k_2,k_3)$.
Since $\{v_i\}_{i=0}^d$ is an $\F$-basis for $E(k_0,k_1,k_2,k_3)$, it follows that (\ref{M/N->E}) is an isomorphism.
\end{proof}

\begin{prop}\label{prop:universal_E}
Let $V$ denote an $\H_q$-module. 
If the element
\begin{gather}\label{e:universal_E}
\prod_{i=0}^{d}
(1-k_0 k_1q^{2\lceil \frac{i}{2}\rceil}Y^{(-1)^{i-1}})
\end{gather}
vanishes at some vector $v$ of $V$ and there is an $\H_q$-module homomorphism $M(k_0,k_1,k_2,k_3)\to V$ that sends $m_0$ to $v$, then there exists an $\H_q$-module homomorphism $E(k_0,k_1,k_2,k_3)\to V$ that sends $v_0$ to $v$.
\end{prop}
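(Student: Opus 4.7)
The plan is to reduce the proposition to Lemma~\ref{lem:E} by showing that the given homomorphism $\Psi\colon M(k_0,k_1,k_2,k_3)\to V$ annihilates the submodule $N(k_0,k_1,k_2,k_3)$. Since $N(k_0,k_1,k_2,k_3)$ is by definition generated by $m_{d+1}$, this reduces to verifying the single equality $\Psi(m_{d+1})=0$.

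The key observation is that iterating Lemma~\ref{lem:XYinP}(ii) expresses $m_{d+1}$ as the element (\ref{e:universal_E}) applied to $m_0$. Concretely, the recursion
$$
m_{i+1}=\bigl(1-k_0k_1q^{2\lceil i/2\rceil}Y^{(-1)^{i-1}}\bigr)m_i
\qquad (i=0,1,\ldots,d)
$$
unfolded from $m_0$ up to $m_{d+1}$ produces a product of the factors appearing in (\ref{e:universal_E}), written in the reverse order $i=d,d-1,\ldots,0$. I would pause here to note that all these factors lie in the commutative subalgebra $\F[Y^{\pm 1}]\subseteq \H_q$, so they commute pairwise; hence the product is independent of order and agrees with the product in (\ref{e:universal_E}) as displayed.

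Applying the $\H_q$-module homomorphism $\Psi$ to this identity gives
$$
\Psi(m_{d+1})=\prod_{i=0}^{d}\bigl(1-k_0k_1q^{2\lceil i/2\rceil}Y^{(-1)^{i-1}}\bigr)\Psi(m_0)=\prod_{i=0}^{d}\bigl(1-k_0k_1q^{2\lceil i/2\rceil}Y^{(-1)^{i-1}}\bigr)v=0
$$
by the vanishing hypothesis. Therefore $N(k_0,k_1,k_2,k_3)\subseteq \ker\Psi$, and $\Psi$ descends to an $\H_q$-module homomorphism $M(k_0,k_1,k_2,k_3)/N(k_0,k_1,k_2,k_3)\to V$ that sends $m_0+N(k_0,k_1,k_2,k_3)$ to $v$. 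Precomposing this with the inverse of the isomorphism from Lemma~\ref{lem:E} yields an $\H_q$-module homomorphism $E(k_0,k_1,k_2,k_3)\to V$ that sends $v_0$ to $v$, as required.

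There is no real obstacle in this argument; the only place that needs genuine care is justifying the passage from the recursive identity $m_{i+1}=(1-k_0k_1q^{2\lceil i/2\rceil}Y^{(-1)^{i-1}})m_i$ to the single formula $m_{d+1}=\bigl(\prod_{i=0}^{d}\cdots\bigr)m_0$, which is a consequence of the commutativity of Laurent polynomials in $Y$.
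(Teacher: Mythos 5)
Your proof is correct and takes essentially the same route as the paper's: both use Lemma \ref{lem:XYinP}(ii) to conclude that $m_{d+1}$ lies in the kernel of the given homomorphism, hence that $N(k_0,k_1,k_2,k_3)$ does, and then descend to the quotient and invoke the isomorphism of Lemma \ref{lem:E}. Your added remark justifying the reordering of the factors via commutativity in $\F[Y^{\pm 1}]$ is a harmless elaboration of a step the paper leaves implicit.
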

\begin{proof}
Let $\Phi$ denote the $\H_q$-module homomorphism $M(k_0,k_1,k_2,k_3)\to V$ sends $m_0$ to $v$. Since (\ref{e:universal_E}) vanishes at $v$ and by Lemma \ref{lem:XYinP}(ii) the vector $m_{d+1}$ is in the kernel of $\Phi$. It follows that there is an $\H_q$-module homomorphism $M(k_0,k_1,k_2,k_3)/N(k_0,k_1,k_2,k_3)\to V$ that sends $m_0+N(k_0,k_1,k_2,k_3)$ to $v$. Combined with Lemma \ref{lem:E} the proposition follows.
\end{proof}

\begin{lem}\label{lem:irr_E}
If the $\H_q$-module $E(k_0,k_1,k_2,k_3)$ is irreducible then the following conditions hold:
\begin{enumerate}
\item $q^i\not=1$ for all $i=2,4,\ldots,d-1$.

\item $k_0^2\not=q^{-i}$ for all $i=2,4,\ldots,d-1$.

\item $k_0k_1k_2k_3, k_0k_1k_2^{-1}k_3\not=q^{-i}$ for all $i=1,3,\ldots,d$.
\end{enumerate}
\end{lem}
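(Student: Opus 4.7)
The plan is to prove the contrapositive in a uniform way: for each index $i$ with $1\leq i\leq d$, if the parameter $\varrho_i$ defined in (\ref{varphi}) vanishes, then the subspace
$$
W_i=\mathrm{span}_\F\{v_i,v_{i+1},\ldots,v_d\}
$$
is a proper nonzero $\H_q$-submodule of $E(k_0,k_1,k_2,k_3)$, contradicting irreducibility. Conditions (i)--(iii) will then drop out by reading off what $\varrho_i\neq 0$ means case-by-case.

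For the submodule check, I would scan the formulas in Proposition \ref{prop:E}(i) and tabulate, for each $\ell\in\{i,i+1,\ldots,d\}$ and each $t_j$, the basis vectors that appear in $t_j v_\ell$. The observation is that the only index lower than $\ell$ that ever appears is $\ell-1$, and a $v_{\ell-1}$ term shows up in exactly two of the four generators: $t_0$ and $t_1$ when $\ell$ is even, $t_2$ and $t_3$ when $\ell$ is odd. In every one of these four situations the coefficient of $v_{\ell-1}$ is, up to a nonzero scalar, precisely $\varrho_\ell$ (for even $\ell$ it is $(1-q^\ell)(1-k_0^2q^\ell)$; for odd $\ell$ it is $(k_0k_1k_3q^\ell-k_2)(k_0k_1k_3q^\ell-k_2^{-1})$, possibly divided by $k_0k_1k_3q^\ell$ or multiplied by $\pm k_j^{\pm 1}$). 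For $\ell>i$ the $v_{\ell-1}$ term already lies in $W_i$ since $\ell-1\geq i$; for $\ell=i$ the problematic coefficient vanishes by the hypothesis $\varrho_i=0$. Hence $t_j W_i\subseteq W_i$ for $j=0,1,2,3$. Since $E(k_0,k_1,k_2,k_3)$ is finite-dimensional and each $t_j$ is invertible on it, the restriction $t_j|_{W_i}$ is injective and therefore bijective on $W_i$, so $t_j^{-1}W_i\subseteq W_i$ as well; thus $W_i$ is an $\H_q$-submodule. Because $1\leq i\leq d$, $W_i$ contains $v_i\neq 0$ and misses $v_0$, so it is proper and nonzero.

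Assuming now that $E(k_0,k_1,k_2,k_3)$ is irreducible, the previous paragraph forces $\varrho_i\neq 0$ for every $i\in\{1,2,\ldots,d\}$. Since $d$ is odd, the even values of $i$ in this range are $2,4,\ldots,d-1$ and the odd values are $1,3,\ldots,d$. For the even values, $\varrho_i=(1-q^i)(1-k_0^2q^i)\neq 0$ gives parts (i) and (ii). For the odd values, $\varrho_i=(k_0k_1k_3q^i-k_2)(k_0k_1k_3q^i-k_2^{-1})\neq 0$ is equivalent to $k_0k_1k_2^{-1}k_3\neq q^{-i}$ and $k_0k_1k_2k_3\neq q^{-i}$, yielding (iii).

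There is no real obstacle here; the only thing that needs care is the bookkeeping of which generator contributes a $v_{\ell-1}$ term at each parity of $\ell$, and that a single factor $\varrho_\ell$ controls all of them simultaneously. Once that uniform $\varrho_\ell$-dependence is noted directly from Proposition \ref{prop:E}(i), the argument is immediate and does not require the $X$, $Y$ machinery of Lemma \ref{lem:XYinE}.
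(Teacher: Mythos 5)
Your proposal is correct and follows essentially the same route as the paper: the paper's proof likewise observes that conditions (i)--(iii) are equivalent to $\varrho_i\not=0$ for $i=1,\ldots,d$, and, assuming some $\varrho_k=0$, takes $W=\mathrm{span}_\F\{v_i\}_{i=k}^d$ and checks via Proposition \ref{prop:E} that it is a proper $\H_q$-submodule, contradicting irreducibility. Your write-up merely makes explicit the bookkeeping (which generators contribute a $v_{\ell-1}$ term at each parity, and the invariance under $t_j^{-1}$ via finite-dimensionality) that the paper leaves as a routine verification.
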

\begin{proof}
By (\ref{varphi}) the conditions (i)--(iii) hold if and only if $\varrho_i\not=0$ for all $i=1,2,\ldots,d$. Suppose that there is a $k\in\{1,2,\ldots,d\}$ such that $\varrho_k=0$. Let $W$ denote the $\F$-subspace of $E(k_0,k_1,k_2,k_3)$ spanned by $\{v_i\}_{i=k}^d$. Using Proposition \ref{prop:E} yields that $W$ is invariant under $\{t_i^{\pm 1}\}_{i=0}^3$. Hence $W$ is a proper $\H_q$-submodule of $E(k_0,k_1,k_2,k_3)$, a contradiction to the irreducibility of $E(k_0,k_1,k_2,k_3)$. The lemma follows.
\end{proof}

\begin{prop}\label{prop:iso2}
The $\H_q$-module $E(k_0,k_1,k_2,k_3)$ is isomorphic to $E(k_0,k_1^{-1},k_2,k_3)$.
Moreover the vectors 
$$
w_i=
\displaystyle
\prod_{h=0}^{i-1}
(1-k_0 k_1^{-1} q^{2\lceil \frac{h}{2}\rceil} Y^{(-1)^{h-1}}) v_0
\qquad 
\hbox{for all $i=0,1,\ldots,d$}
$$
form an $\F$-basis for $E(k_0,k_1,k_2,k_3)$ and 
\begin{align*} 
(1-k_0 k_3 q^{2\lceil \frac{i}{2}\rceil} X^{(-1)^{i-1}}) w_i
&=
\left\{
\begin{array}{ll}
0
\qquad 
&\hbox{if $i=0$},
\\
\phi_i
w_{i-1}
\qquad 
&\hbox{if $i=1,2,\ldots,d$};
\end{array}
\right.
\\
(1-k_0 k_1^{-1} q^{2\lceil \frac{i}{2}\rceil} Y^{(-1)^{i-1}}) w_i
&=
\left\{
\begin{array}{ll}
0
\qquad 
&\hbox{if $i=d$},
\\
w_{i+1}
\qquad 
&\hbox{if $i=0,1,\ldots,d-1$}.
\end{array}
\right.
\end{align*}
\end{prop}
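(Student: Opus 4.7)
The plan is to apply the universal property (Theorem~\ref{thm:universal}) with parameters $(k_0,k_1^{-1},k_2,k_3)$, show the resulting homomorphism factors through $E(k_0,k_1^{-1},k_2,k_3)$, and thereby obtain an isomorphism onto $E(k_0,k_1,k_2,k_3)$. Under this isomorphism, the standard basis vectors of $E(k_0,k_1^{-1},k_2,k_3)$ will map to the $w_i$, yielding the desired basis and its $X$- and $Y$-action formulas at once.

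First I would note that $v_0\in E(k_0,k_1,k_2,k_3)$ satisfies $t_0v_0=k_0v_0$, $t_3v_0=k_3v_0$, $c_1v_0=(k_1+k_1^{-1})v_0$, and $c_2v_0=(k_2+k_2^{-1})v_0$, so Theorem~\ref{thm:universal} applied with parameters $(k_0,k_1^{-1},k_2,k_3)$ (noting $k_1^{-1}+k_1=k_1+k_1^{-1}$) yields a unique $\H_q$-module homomorphism $\Psi:M(k_0,k_1^{-1},k_2,k_3)\to E(k_0,k_1,k_2,k_3)$ sending $m_0$ to $v_0$. Iterating Lemma~\ref{lem:XYinP}(ii) (with $k_1$ replaced by $k_1^{-1}$) gives $\Psi(m_i)=w_i$ for every $i\geq 0$.

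The crux is to show $w_{d+1}=0$. Set
\[
P_\mu(Y):=\prod_{i=0}^{d}\bigl(1-\mu q^{2\lceil i/2\rceil}Y^{(-1)^{i-1}}\bigr).
\]
Iterating Lemma~\ref{lem:XYinE}(ii) in $E(k_0,k_1,k_2,k_3)$ gives $P_{k_0k_1}(Y)v_0=v_{d+1}=0$. To transfer this to $P_{k_0k_1^{-1}}(Y)$, I would rewrite each factor as $(1-\mu q^{i}Y^{-1})=Y^{-1}(Y-\mu q^{i})$ for $i$ even and $(1-\mu q^{i+1}Y)=-\mu q^{i+1}(Y-\mu^{-1}q^{-i-1})$ for $i$ odd, so that
\[
P_\mu(Y)=C(\mu)\,Y^{-(d+1)/2}\prod_{i=0}^{d}\bigl(Y-\theta_i(\mu)\bigr)
\]
for a nonzero scalar $C(\mu)$ and $\theta_i(\mu)$ as in Lemma~\ref{lem:theta}. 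A direct computation from the definition of $\theta_i$ using $k_0^2=q^{-d-1}$ shows $\theta_i(k_0k_1)=\theta_{d-i}(k_0k_1^{-1})$ for $0\leq i\leq d$ in both parity cases, so the multisets of roots $\{\theta_i(k_0k_1)\}_{i=0}^d$ and $\{\theta_i(k_0k_1^{-1})\}_{i=0}^d$ coincide. Hence $P_{k_0k_1^{-1}}(Y)$ is a nonzero scalar multiple of $P_{k_0k_1}(Y)$, whence $P_{k_0k_1^{-1}}(Y)v_0=0$, i.e.\ $w_{d+1}=0$.

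Since $\Psi$ now kills $m_{d+1}$, it kills the submodule $N(k_0,k_1^{-1},k_2,k_3)$, and Lemma~\ref{lem:E} applied with parameters $(k_0,k_1^{-1},k_2,k_3)$ lets it descend to an $\H_q$-module homomorphism $\tilde\Psi:E(k_0,k_1^{-1},k_2,k_3)\to E(k_0,k_1,k_2,k_3)$ taking the $i$th standard basis vector to $w_i$ for $0\leq i\leq d$. Since $E(k_0,k_1,k_2,k_3)$ is irreducible and $\tilde\Psi$ is nonzero, $\tilde\Psi$ is surjective, and a dimension count forces it to be an isomorphism; this proves the isomorphism claim and that $\{w_i\}_{i=0}^d$ is an $\F$-basis. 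The $X$- and $Y$-action formulas for the $w_i$ then follow by transporting Lemma~\ref{lem:XYinE} applied in $E(k_0,k_1^{-1},k_2,k_3)$ along $\tilde\Psi$; notably, the factor $\varrho_i$ becomes $\phi_i$ under $k_1\to k_1^{-1}$, directly from (\ref{phi}) and (\ref{varphi}). The main obstacle is the polynomial identity step: individual factors of $P_{k_0k_1}$ and $P_{k_0k_1^{-1}}$ do not match under $k_1\to k_1^{-1}$, but the full products do, thanks to $k_0^2q^{d+1}=1$ globally permuting the roots $\theta_i$.
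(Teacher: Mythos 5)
Your route is essentially the paper's own, with the direction of the map reversed: the paper also feeds a $\{t_0,t_3,c_1,c_2\}$-eigenvector into Theorem~\ref{thm:universal}, observes that the two products $\prod_{i=0}^{d}(1-k_0k_1^{\pm 1}q^{2\lceil i/2\rceil}Y^{(-1)^{i-1}})$ agree up to a unit because $k_0^2q^{d+1}=1$ matches the $i$th factor of one with the $(d-i)$th factor of the other (your identity $\theta_i(k_0k_1)=\theta_{d-i}(k_0k_1^{-1})$ is exactly that pairing, phrased via the roots of Lemma~\ref{lem:theta}), and then factors through $M/N$ via Proposition~\ref{prop:universal_E}. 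Your computations are correct: $\Psi(m_i)=w_i$, the factorization $P_\mu(Y)=C(\mu)\,Y^{-(d+1)/2}\prod_{i=0}^d(Y-\theta_i(\mu))$ with $C(\mu)\neq 0$ is legitimate (all factors lie in the commutative algebra $\F[Y^{\pm 1}]$, and $Y$ acts invertibly), and $\varrho_i$ indeed becomes $\phi_i$ under $k_1\to k_1^{-1}$.

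There is, however, one genuine flaw: you derive surjectivity of $\tilde\Psi$ from ``Since $E(k_0,k_1,k_2,k_3)$ is irreducible.'' Proposition~\ref{prop:iso2} carries no irreducibility hypothesis --- unlike Theorem~\ref{thm:iso} --- and it must not: the paper invokes it unconditionally, for instance to obtain the initial values (\ref{L:init}) inside the proof of Theorem~\ref{thm:irr}, where irreducibility is precisely the conclusion being established from conditions (i)--(iii); a version of Proposition~\ref{prop:iso2} that presupposes irreducibility would make that argument circular. The repair is one line and stays entirely inside your setup: by Lemma~\ref{lem:XYinE}(ii) the module $E(k_0,k_1,k_2,k_3)$ is cyclic, generated by $v_0$ (applying the operators $1-k_0k_1q^{2\lceil i/2\rceil}Y^{(-1)^{i-1}}$ successively to $v_0$ yields every $v_i$), so the image of $\tilde\Psi$, being a submodule containing $\tilde\Psi(u_0)=w_0=v_0$, is all of $E(k_0,k_1,k_2,k_3)$; your dimension count then upgrades $\tilde\Psi$ to an isomorphism with no appeal to irreducibility. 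With that substitution your proof is complete and faithful to the paper's argument.
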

\begin{proof}
Let $\{u_i\}_{i=0}^d$ denote the $\F$-basis for $E(k_0,k_1^{-1},k_2,k_3)$ obtained from Proposition \ref{prop:E}(i). 
To see the proposition, it suffices to show that there exists an $\H_q$-module homomorphism $E(k_0,k_1,k_2,k_3)\to E(k_0,k_1^{-1},k_2,k_3)$ that sends $w_i$ to $u_i$ for all $i=0,1,\ldots,d$.

By Proposition \ref{prop:E} the vector $u_0$ satisfies the following equations:
\begin{gather*}
t_0 u_0=k_0 u_0,
\qquad 
t_3 u_0=k_3 u_0,
\\
c_1 u_0=(k_1+k_1^{-1}) u_0,
\qquad 
c_2 u_0=(k_2+k_2^{-1}) u_0.
\end{gather*}
Hence there exists a unique $\H_q$-module homomorphism
\begin{gather*}
M(k_0,k_1,k_2,k_3)\to E(k_0,k_1^{-1},k_2,k_3)
\end{gather*} 
that sends $m_0$ to $u_0$ by Theorem \ref{thm:universal}. It follows from Lemma \ref{lem:XYinE}(ii) that 
\begin{gather}\label{w_0=0}
\prod_{i=0}^d(1-k_0k_1^{-1}q^{2 \lceil \frac{i}{2}\rceil} Y^{(-1)^{i-1}})
\end{gather}
vanishes at $u_0$. Since $d$ is odd and $k_0^2=q^{-d-1}$ the $i^{\,{\rm th}}$ term of (\ref{w_0=0}) is equal to $-k_0k_1^{-1} q^{2 \lceil \frac{i}{2}\rceil} Y^{(-1)^{i-1}}$ times the $(d-i)^{\,{\rm th}}$ term of (\ref{e:universal_E}) 
for all $i=0,1,\ldots,d$.
Hence there exists an $\H_q$-module homomorphism
\begin{gather}\label{E(k1inv)->E}
E(k_0,k_1,k_2,k_3)\to E(k_0,k_1^{-1},k_2,k_3)
\end{gather} 
that sends $v_0$ to $u_0$ by Proposition \ref{prop:universal_E}. By the construction of $\{w_i\}_{i=0}^d$ the homomorphism (\ref{E(k1inv)->E}) maps $w_i$ to $u_i$ for all $i=0,1,\ldots,d$. The proposition follows.
\end{proof}

\begin{lem}\label{lem:irr2}
If the $\H_q$-module $E(k_0,k_1,k_2,k_3)$ is irreducible then the following conditions hold:
\begin{enumerate}
\item $q^i\not=1$ for all $i=2,4,\ldots,d-1$.

\item $k_0^2\not=q^{-i}$ for all $i=2,4,\ldots,d-1$.

\item $k_0k_1k_2k_3, k_0k_1^{-1}k_2k_3, k_0k_1k_2^{-1}k_3, k_0k_1k_2 k_3^{-1}\not=q^{-i}$ for all $i=1,3,\ldots,d$.
\end{enumerate}
\end{lem}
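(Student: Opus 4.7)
The plan is to deduce Lemma \ref{lem:irr2} from Lemma \ref{lem:irr_E} applied to the modules $E(k_0,k_1^{-1},k_2,k_3)$ and $E(k_0,k_1,k_2,k_3^{-1})$, both of which I will argue are isomorphic to $E(k_0,k_1,k_2,k_3)$ (and hence irreducible alongside it). Items (i) and (ii) of the present lemma coincide verbatim with items (i) and (ii) of Lemma \ref{lem:irr_E}, so the only genuinely new content lies in item (iii), namely the two additional nonequalities $k_0 k_1^{-1} k_2 k_3 \neq q^{-i}$ and $k_0 k_1 k_2 k_3^{-1}\neq q^{-i}$ for $i\in\{1,3,\ldots,d\}$.

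For the first of these, Proposition \ref{prop:iso2} directly supplies the isomorphism $E(k_0,k_1,k_2,k_3)\cong E(k_0,k_1^{-1},k_2,k_3)$. Transporting irreducibility across this isomorphism and applying Lemma \ref{lem:irr_E}(iii) to $E(k_0,k_1^{-1},k_2,k_3)$ yields $k_0 k_1^{-1} k_2 k_3$ and (incidentally) $k_0 k_1^{-1} k_2^{-1} k_3$, both $\neq q^{-i}$ for odd $i\leq d$, which subsumes what is needed. Items (i) and (ii) of Lemma \ref{lem:irr_E} applied to the same isomorphic copy reproduce the conditions already in hand.

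For the second new nonequality, I would establish an analogue of Proposition \ref{prop:iso2} producing an isomorphism $E(k_0,k_1,k_2,k_3)\cong E(k_0,k_1,k_2,k_3^{-1})$. The construction mirrors the $k_1$ case but swaps the roles of $Y$ and $X$: one builds an alternative $\F$-basis of $E(k_0,k_1,k_2,k_3)$ by repeatedly applying factors of the form $1-k_0 k_3^{-1}q^{2\lceil h/2\rceil}X^{(-1)^{h-1}}$ to a suitably chosen seed vector, and then verifies via Lemma \ref{lem:XYinE}(i) and the universal property in Theorem \ref{thm:universal} that the resulting action of $\{t_i^{\pm 1}\}_{i=0}^{3}$ matches the formulas of Proposition \ref{prop:E} with $k_3$ replaced by $k_3^{-1}$. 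Applying Lemma \ref{lem:irr_E}(iii) to $E(k_0,k_1,k_2,k_3^{-1})$ then delivers $k_0 k_1 k_2 k_3^{-1}\neq q^{-i}$.

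The principal obstacle is the construction of the $k_3\leftrightarrow k_3^{-1}$ isomorphism. Unlike $v_0$, which by Proposition \ref{prop:E} serves as a simultaneous $t_0,t_3$-eigenvector with eigenvalues matching $E(k_0,k_1^{-1},k_2,k_3)$ at index $0$, no basis vector of $E(k_0,k_1,k_2,k_3)$ is a simultaneous eigenvector with $t_3$-eigenvalue $k_3^{-1}$. The correct seed must therefore be extracted from the opposite end of the basis, exploiting that $k_0^2=q^{-d-1}$ forces $v_d$ to lie in the generalized $X$-eigenspace of eigenvalue $k_0 k_3^{-1}$; once this seed is pinned down and the universal property invoked, the remainder of the verification parallels the proof of Proposition \ref{prop:iso2} line by line.
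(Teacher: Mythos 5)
Your first half is exactly the paper's argument and is fine: items (i), (ii) and the nonequalities for $k_0k_1k_2k_3$, $k_0k_1k_2^{-1}k_3$ come from Lemma \ref{lem:irr_E} applied to $E(k_0,k_1,k_2,k_3)$ itself, and since Proposition \ref{prop:iso2} holds unconditionally, irreducibility transports to $E(k_0,k_1^{-1},k_2,k_3)$, whence Lemma \ref{lem:irr_E} yields $k_0k_1^{-1}k_2k_3\not=q^{-i}$ \emph{and} $k_0k_1^{-1}k_2^{-1}k_3\not=q^{-i}$ for all odd $i\leq d$.

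The gap is in your route to $k_0k_1k_2k_3^{-1}\not=q^{-i}$, which is circular. A $k_3\leftrightarrow k_3^{-1}$ analogue of Proposition \ref{prop:iso2} cannot be carried out ``line by line'': that proposition works because the ready-made basis vector $u_0$ of the target is already a simultaneous $(t_0,t_3)$-eigenvector with eigenvalues $(k_0,k_3)$ and the correct $c_1,c_2$-eigenvalues (these are symmetric under $k_1\mapsto k_1^{-1}$), so Theorem \ref{thm:universal} applies at once. For $k_3\leftrightarrow k_3^{-1}$ no basis vector serves as a seed, and manufacturing a simultaneous $(t_0,t_3)$-eigenvector with the ``wrong'' $t_3$-eigenvalue --- which is what your generalized-$X$-eigenspace remark would have to be upgraded to, since a generalized $X$-eigenvector is not yet a $t_0,t_3$-eigenvector --- requires nondegeneracy conditions. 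Concretely, the paper's own construction of this seed (proof of Theorem \ref{thm:iso}(iii)) needs $\chi_i\not=0$ for $i=1,\ldots,d$, and for odd $i$ the condition $\chi_i\not=0$ from (\ref{psi}) is \emph{literally} $k_0k_1k_2k_3^{-1}\not=q^{-i}$ and $k_0k_1k_2^{-1}k_3^{-1}\not=q^{-i}$ --- the very statement you are trying to prove; the paper imports it from Theorem \ref{thm:irr}, whose ``only if'' half is Lemma \ref{lem:irr2} itself, which is why Theorem \ref{thm:iso}(iii) is proved \emph{after} this lemma and under an irreducibility hypothesis. The repair is the datum you set aside as ``incidental'': since $k_0^2=q^{-d-1}$, inverting $k_0k_1^{-1}k_2^{-1}k_3\not=q^{-i}$ and multiplying by $k_0^2$ gives $k_0k_1k_2k_3^{-1}\not=q^{i-d-1}$, and $i\mapsto d+1-i$ permutes $\{1,3,\ldots,d\}$, so the family of conditions $k_0k_1^{-1}k_2^{-1}k_3\not=q^{-i}$ ($i$ odd, $i\leq d$) is equivalent to $k_0k_1k_2k_3^{-1}\not=q^{-i}$ ($i$ odd, $i\leq d$). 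This reindexing is precisely how the paper concludes; no second isomorphism is needed.
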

\begin{proof}
Suppose that the $\H_q$-module $E(k_0,k_1,k_2,k_3)$ is irreducible. 
By Proposition \ref{prop:iso2} the $\H_q$-module $E(k_0,k_1^{-1},k_2,k_3)$ is irreducible.
Applying Lemma \ref{lem:irr_E} to $E(k_0,k_1^{-1},k_2,k_3)$ yields that $k_0k_1^{-1}k_2k_3,k_0k_1^{-1}k_2^{-1}k_3\not=q^{-i}$ for all $i=1,3,\ldots,d$. Since $k_0^2=q^{-d-1}$ the conditions $k_0k_1^{-1}k_2^{-1}k_3\not=q^{-i}$ for all $i=1,3,\ldots,d$ are equivalent to $k_0k_1k_2k_3^{-1}\not=q^{-i}$ for all $i=1,3,\ldots,d$. The lemma follows. 
\end{proof}

Define
\begin{align*}
R &= \prod_{h=1}^{d}
(1-k_0 k_3 q^{2\lceil \frac{h}{2}\rceil} X^{(-1)^{h-1}}),
\\
S_i &= 
\prod_{h=1}^{d-i}
(1-k_0 k_1^{-1} q^{2\lceil \frac{h-1}{2}\rceil} Y^{(-1)^h})
\qquad 
\hbox{for all $i=0,1,\ldots,d$}.
\end{align*}
It follows from Lemma \ref{lem:XYinE}(i) that $R v$ is a scalar multiple of $v_0$ for all $v\in E(k_0,k_1,k_2,k_3)$. In particular, for any $i,j\in \{0,1,\ldots,d\}$ there exists a unique $L_{ij}\in \F$ such that 
\begin{gather}\label{Lij}
R S_i v_j= L_{ij} v_0.
\end{gather}
By Lemma \ref{lem:XYinE}(ii) the scalars 
\begin{align}
L_{ij} &=0 \qquad 
\hbox{for all $0\leq i<j\leq d$};
\label{L:lower}\\
L_{ij} &=
\left\{
\begin{array}{ll}
k_1^2
q^{i+j-d-1}
(L_{i-1,j-1}-L_{i,j-1})+L_{i,j-1}
&\hbox{if $i=j \bmod{2}$},
\\
(1-q^{j-i-1}) L_{i,j-1}
+L_{i-1,j}-L_{i-1,j-1}
&\hbox{if $i\not=j \bmod{2}$}
\end{array}
\right.
\label{L:recurrence}
\end{align}
for all $i,j\in\{1,2,\ldots,d\}$. 
By Proposition \ref{prop:iso2} the scalars 
\begin{gather}\label{L:init}
L_{i0}=
\prod_{h=1}^{\lfloor \frac{i}{2}\rfloor}
(1-
q^{d-2h+1})
\prod_{h=1}^{\lceil \frac{i}{2}\rceil}
(1-k_3^2 
q^{2-2h}
)
\prod_{h=1}^{d-i} \phi_h
\qquad 
\hbox{for all $i=0,1,\ldots,d$}.
\end{gather}
Solving the recurrence relation (\ref{L:recurrence}) with the initial conditions (\ref{L:lower}) and (\ref{L:init}) yields that 
\begin{align}
L_{ij}
&=
\displaystyle
q^{\lfloor\frac{j}{2}\rfloor(d-4\lfloor \frac{i}{2}\rfloor+2\lfloor\frac{j}{2}\rfloor-1)}
\prod_{h=1}^{\lfloor\frac{i-j}{2}\rfloor}
(1-q^{d-2h+1})
\prod_{h=1}^{d-i} \phi_h
\prod_{h=1}^{\lceil\frac{j}{2}\rceil}
\varrho_{2h-1}
\prod_{h=1}^{\lfloor\frac{j}{2}\rfloor}
\varrho_{2(\lfloor \frac{i}{2}\rfloor-h+1)}
\label{e:Lij}
\\
&\qquad\times 
\;
\left\{
\begin{array}{ll}
\displaystyle
\prod_{h=1}^{\lceil\frac{i-j}{2}\rceil}
(1-k_3^2 q^{2-2h})
\qquad 
&\hbox{if $i$ is odd or $j$ is even},
\\
\displaystyle
q^{d-2i+2j-1}
\varrho_{i-j+1}
\prod_{h=1}^{\lfloor\frac{i-j}{2}\rfloor}
(1-k_3^2 q^{2-2h})
\qquad 
&\hbox{if $i$ is even and $j$ is odd}
\end{array}
\right.
\notag
\end{align}
for all $0\leq j\leq i\leq d$.

We now show that the converse of Lemma \ref{lem:irr2} is true.

\begin{thm}\label{thm:irr}
The $\H_q$-module $E(k_0,k_1,k_2,k_3)$ is irreducible if and only if the following conditions hold:
\begin{enumerate}
\item $q^i\not=1$ for all $i=2,4,\ldots,d-1$.

\item $k_0^2\not=q^{-i}$ for all $i=2,4,\ldots,d-1$.

\item $k_0k_1k_2k_3, k_0k_1^{-1}k_2k_3, k_0k_1k_2^{-1}k_3, k_0k_1k_2 k_3^{-1}\not=q^{-i}$ for all $i=1,3,\ldots,d$.
\end{enumerate}
\end{thm}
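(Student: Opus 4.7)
The ``only if'' direction is already handled by Lemma \ref{lem:irr2}, so the work lies in the ``if'' direction. Assume (i)--(iii); I want to show that every nonzero $\H_q$-submodule $W$ of $E(k_0,k_1,k_2,k_3)$ equals $E(k_0,k_1,k_2,k_3)$. The plan is a two-step hit-and-spread: first extract $v_0$ from an arbitrary nonzero $v\in W$ by applying the elements $R$ and $S_i$ introduced before (\ref{Lij}), then regenerate the rest of the basis $\{v_i\}_{i=0}^{d}$ out of $v_0$ using the iterated $Y$-action of Lemma \ref{lem:XYinE}(ii).

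For the first step, pick $0\neq v\in W$ and expand $v=\sum_{j=0}^{d}\alpha_j v_j$ in the basis of Proposition \ref{prop:E}(i). Set $j_\ast=\min\{j:\alpha_j\neq 0\}$ and apply $RS_{j_\ast}$ to $v$. By (\ref{Lij}) together with (\ref{L:lower}), every index $j>j_\ast$ contributes $L_{j_\ast,j}=0$, so only the $j=j_\ast$ term survives and $RS_{j_\ast}v=\alpha_{j_\ast}L_{j_\ast,j_\ast}v_0$. Hence the whole step reduces to showing $L_{j_\ast,j_\ast}\neq 0$.

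To access $L_{j_\ast,j_\ast}$ I will specialize the closed formula (\ref{e:Lij}) along the diagonal $i=j=j_\ast$. The two ``correction'' products $\prod_{h=1}^{\lfloor(i-j)/2\rfloor}(1-q^{d-2h+1})$ and $\prod_{h=1}^{\lceil(i-j)/2\rceil}(1-k_3^2 q^{2-2h})$ collapse to empty products, the ``$i$ even and $j$ odd'' branch of the case split does not apply, and a short reindexing merges $\prod_{h=1}^{\lceil j_\ast/2\rceil}\varrho_{2h-1}$ with $\prod_{h=1}^{\lfloor j_\ast/2\rfloor}\varrho_{2(\lfloor j_\ast/2\rfloor-h+1)}$ into $\varrho_1\varrho_2\cdots\varrho_{j_\ast}$. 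The upshot is that $L_{j_\ast,j_\ast}$ is a power of $q$ times $\prod_{h=1}^{d-j_\ast}\phi_h\cdot\prod_{h=1}^{j_\ast}\varrho_h$. Since $j_\ast$ is arbitrary in $\{0,1,\ldots,d\}$, a uniform verification is obtained by checking $\phi_h\neq 0$ and $\varrho_h\neq 0$ for every $h\in\{1,\ldots,d\}$.

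I expect the main obstacle to be exactly this matching of the nonvanishing of $\phi_h,\varrho_h$ against (i)--(iii). By (\ref{phi}) and (\ref{varphi}) the even-$h$ case is immediate from (i) and (ii). In the odd-$h$ case the four scenarios for $\phi_h\varrho_h=0$ are $k_0k_1^{\pm 1}k_2^{\pm 1}k_3=q^{-h}$; three of them are literally three of the four inequalities in (iii), while the fourth $k_0k_1^{-1}k_2^{-1}k_3=q^{-h}$ must be rewritten using the hypothesis $k_0^{-2}=q^{d+1}$, which turns it into $k_0k_1k_2k_3^{-1}=q^{h-d-1}$; as $h$ runs through $\{1,3,\ldots,d\}$ the exponent $h-d-1$ runs through $\{-d,-d+2,\ldots,-1\}$, recovering precisely the exclusion in the remaining inequality of (iii). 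Therefore $L_{j_\ast,j_\ast}\neq 0$, so $v_0\in W$; iterating Lemma \ref{lem:XYinE}(ii) then pulls $v_1,\ldots,v_d$ into $W$ and forces $W=E(k_0,k_1,k_2,k_3)$, completing the proof.
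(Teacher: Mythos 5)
Your proposal is correct and follows essentially the same route as the paper: both arguments exploit the lower-triangularity (\ref{L:lower}) of the matrix $L=(L_{ij})$ built from $R$ and $S_i$, the closed-form diagonal entries from (\ref{e:Lij}) being nonzero because (i)--(iii) force $\phi_h\neq 0$ and $\varrho_h\neq 0$ for $h=1,\ldots,d$ (including the same rewriting of $k_0k_1^{-1}k_2^{-1}k_3=q^{-h}$ via $k_0^2=q^{-d-1}$), and then regeneration of the basis from $v_0$ via Lemma \ref{lem:XYinE}(ii). Your minimal-index choice $j_\ast$ is just a transparent rephrasing of the paper's observation that $L$ is nonsingular, so $L[w]\neq 0$.
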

\begin{proof}
By Lemma \ref{lem:irr2} it remains to prove the ``if'' part. Suppose that (i)--(iii) hold. To see the irreducibility of $E(k_0,k_1,k_2,k_3)$, we let $W$ denote a nonzero $\H_q$-submodule of $E(k_0,k_1,k_2,k_3)$ and show that $W= E(k_0,k_1,k_2,k_3)$.

Pick any nonzero $w\in W$. Let $[w]$ denote the coordinate vector of $w$ relative to $\{v_i\}_{i=0}^d$. Let $L$ denote the $(d+1)\times (d+1)$ matrix indexed by $0,1,\ldots,d$ whose $(i,j)$-entry $L_{ij}$ is defined as (\ref{Lij}) for all $i,j\in\{0,1,\ldots,d\}$. 
By (\ref{L:lower}) the square matrix $L$ is lower triangular. By (\ref{e:Lij}) the diagonal entries of $L$ are 
$$
L_{ii}=
q^{\lfloor \frac{i}{2}\rfloor(d-2\lfloor \frac{i}{2}\rfloor-1)}
\prod_{h=1}^{d-i}\phi_h
\prod_{h=1}^{\lceil \frac{i}{2}\rceil} \varrho_{2h-1}
\prod_{h=1}^{\lfloor \frac{i}{2}\rfloor} \varrho_{2(\lfloor \frac{i}{2}\rfloor-h+1)}
$$
for all $i=0,1,\ldots,d$. 
Recall the parameters $\{\phi_i\}_{i\in \Z}$ and $\{\varrho_i\}_{i\in \Z}$ from  (\ref{phi}) and (\ref{varphi}). By (i)--(iii) each of $\{\phi_i\}_{i=1}^d$ and $\{\varrho_i\}_{i=1}^d$ is nonzero. 
Hence $L$ is nonsingular.  Observe that $RS_i w\in W$ is scalar multiplication of $v_0$ by the $i$-entry of $L[w]$. Since $[w]$ is nonzero it follows that $L[w]$ is nonzero and this implies that $v_0\in W$. By Lemma \ref{lem:XYinE}(ii) the $\H_q$-module $E(k_0,k_1,k_2,k_3)$ is generated by $v_0$. Therefore $W=E(k_0,k_1,k_2,k_3)$. The result follows.
\end{proof}

We are now ready to give our proof for Theorem  \ref{thm:iso}.

\medskip

\noindent{\it Proof of Theorem \ref{thm:iso}.}
(i): Immediate from Proposition \ref{prop:iso2}.

(ii): Let $\{u_i\}_{i=0}^d$ denote the $\F$-basis for $E(k_0,k_1,k_2^{-1},k_3)$ obtained from Proposiiton \ref{prop:E}(i). By Proposition \ref{prop:E} there is an $\H_q$-module isomorphism $E(k_0,k_1,k_2,k_3)\to E(k_0,k_1,k_2^{-1},k_3)$ that sends $v_i$ to $u_i$ for all $i=0,1,\ldots,d$. Therefore (ii) follows.

(iii): Let $\{u_i\}_{i=0}^d$ denote the $\F$-basis for $E(k_0,k_1,k_2,k_3^{-1})$ obtained from Proposition \ref{prop:E}(i). Recall the parameters $\{\chi_i\}_{i\in \Z}$ from (\ref{psi}). It is straightforward to verify that each of 
\begin{align*}
w_i&= k_3^2 \chi_i  u_{i-1}
+
(1-k_3^2) u_i
- u_{i+1}
\qquad 
\hbox{for $i=1,3,\ldots,d-2$},
\\
w_d&=k_3^2 \chi_d u_{d-1}+(1-k_3^2)u_d.
\end{align*}
is a $k_3$-eigenvector of $t_3$ in $E(k_0,k_1,k_2,k_3^{-1})$.  Since the $\H_q$-module $E(k_0,k_1,k_2,k_3)$ is irreducible it follows from Theorem \ref{thm:irr} that $\chi_i\not=0$ for all $i=1,2,\ldots,d$ and $q^i\not=1$ for all $i=2,4,\ldots,d-1$. 
Thus we may set 
$$
v=
\sum_{i=1}^{\frac{d+1}{2}}
\prod_{h=1}^{i-1}
\frac{k_3^{-2}-q^{2h-2i}}{1-q^{2h-2i}}
\chi_{2i-2h+1}^{-1}
w_{2i-1}.
$$
By construction the vector $v$ is nonzero and $t_3v=k_3 v$. Using Proposition \ref{prop:E} a direct calculation yields that $t_0 v=k_0 v$. By Theorem \ref{thm:universal} there exists a unique $\H_q$-module homomorphism 
$$
M(k_0,k_1,k_2,k_3)\to E(k_0,k_1,k_2,k_3^{-1})
$$ 
that sends $m_0$ to $v$. By Lemma \ref{lem:XYinE}(ii) the product 
(\ref{e:universal_E}) 
vanishes on $E(k_0,k_1,k_2,k_3^{-1})$. 
Hence there exists a nontrivial $\H_q$-module homomorphism 
\begin{gather}\label{E(k3)->E(k3-1)}
E(k_0,k_1,k_2,k_3)\to E(k_0,k_1,k_2,k_3^{-1})
\end{gather} 
that sends $v_0$ to $v$ by Proposition \ref{prop:universal_E}. Since the $\H_q$-modules $E(k_0,k_1,k_2,k_3)$ and $E(k_0,k_1,k_2,k_3^{-1})$ are irreducible by Theorem \ref{thm:irr}, it follows that (\ref{E(k3)->E(k3-1)}) is an isomorphism. Therefore (iii) follows.
\hfill $\square$

\section{Proof of Theorem \ref{thm:even}}\label{section:even}

In this section we are devoted to proving Theorem \ref{thm:even}.

\begin{thm}\label{thm:onto}
Assume that $\F$ is algebraically closed and $q$ is not a root of unity. Let $d\geq 1$ denote an odd integer. If $V$ is a $(d+1)$-dimensional irreducible $\H_q$-module, then there exist an element $\e\in \Z/4\Z$ and nonzero scalars $k_0,k_1,k_2,k_3\in \F$ with $k_0^2=q^{-d-1}$ such that the $\H_q$-module $E(k_0,k_1,k_2,k_3)^\e$ is isomorphic to $V$.
\end{thm}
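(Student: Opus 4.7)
The strategy is: (a) use the $\Z/4\Z$-action to put $V$ into a normal form where $t_0,t_3$ share a nonzero eigenvector; (b) map $V$ as a quotient of $M(k_0,k_1,k_2,k_3)$ via Theorem~\ref{thm:universal}; (c) identify the kernel precisely and pass through $E(k_0,k_1,k_2,k_3)$ via Proposition~\ref{prop:universal_E}.

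\emph{Twisting and lifting.} Apply Proposition~\ref{prop:X_eigenvector}: either $V$ has a simultaneous $(t_0,t_3)$-eigenvector, or a simultaneous $(t_1,t_2)$-eigenvector, the latter becoming a $(t_0,t_3)$-eigenvector of $V^{2\bmod 4}$ by Table~\ref{Z/4Z-action}. Absorbing this possible twist into $\e\in\{0\bmod 4,2\bmod 4\}$, we assume $V$ contains a nonzero $v$ with $t_0v=k_0v$ and $t_3v=k_3v$ for nonzero $k_0,k_3\in\F$. Lemma~\ref{lem:Schur} and algebraic closedness of $\F$ let us write the central actions of $c_1,c_2$ as $k_1+k_1^{-1}$ and $k_2+k_2^{-1}$ for nonzero $k_1,k_2\in\F$, and Theorem~\ref{thm:universal} provides a (necessarily surjective) $\H_q$-module homomorphism $\varphi:M(k_0,k_1,k_2,k_3)\twoheadrightarrow V$ sending $m_0\mapsto v$. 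Write $V_i:=\varphi(m_i)$.

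\emph{Termination of $\{V_i\}$ at index $d+1$.} By Lemma~\ref{lem:XYinP}(i), $X$ acts upper-triangularly on $\operatorname{span}(V_0,V_1,\ldots)$ with diagonal entries $\theta_i$ (Lemma~\ref{lem:theta}, taking $\mu=k_0k_3$); Lemma~\ref{lem:theta}(iii) combined with $\dim V<\infty$ forces a smallest $s\ge 1$ with $V_s=0$. Lemma~\ref{lem:XYinP}(ii) then propagates $V_i=0$ for $i\ge s$, so $V=\operatorname{span}(V_0,\ldots,V_{s-1})$; and Lemma~\ref{lem:XYinP}(i) at $i=s$ with $V_{s-1}\ne 0$ yields $\varrho_s=0$. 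A direct check from Proposition~\ref{prop:P} shows that $\varrho_s=0$ makes $N_s:=\operatorname{span}(m_s,m_{s+1},\ldots)$ an $\H_q$-submodule of $M(k_0,k_1,k_2,k_3)$, so $\varphi$ factors through $M(k_0,k_1,k_2,k_3)/N_s$, an $s$-dimensional module. Hence $s\ge d+1$. If $s>d+1$, then $V$ is a proper irreducible quotient of $M(k_0,k_1,k_2,k_3)/N_s$, corresponding to a proper $\H_q$-submodule of the form $N_{s'}/N_s$ with $\varrho_{s'}=0$ and $s'<s$; but this would give $V_{s'}=0$ in $V$, contradicting minimality of $s$. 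Thus $s=d+1$, and the even-index formula $\varrho_{d+1}=(1-q^{d+1})(1-k_0^2q^{d+1})=0$ together with $q$ not a root of unity forces $k_0^2=q^{-d-1}$.

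\emph{Conclusion and main obstacle.} With $V_{d+1}=0$ and $k_0^2=q^{-d-1}$ in hand, Proposition~\ref{prop:universal_E} produces a nonzero $\H_q$-module homomorphism $E(k_0,k_1,k_2,k_3)\to V$ sending $v_0\mapsto v$; by irreducibility of $V$ and equality of dimensions it is an isomorphism, and restoring the twist yields the stated $V\cong E(k_0,k_1,k_2,k_3)^\e$. The critical step is the submodule-structure claim used to force $s=d+1$: namely, every submodule of $M(k_0,k_1,k_2,k_3)$ properly containing $N_s$ is of the form $N_{s'}$ for some $s'<s$ with $\varrho_{s'}=0$. Establishing this ``highest-weight''-type rigidity requires careful exploitation of the triangular formulas in Proposition~\ref{prop:P}, and once in place it is precisely what converts the dimension hypothesis $\dim V=d+1$ into the parameter constraint $k_0^2=q^{-d-1}$.
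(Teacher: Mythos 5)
Your overall architecture (twist to obtain a simultaneous $(t_0,t_3)$-eigenvector, lift via Theorem \ref{thm:universal}, kill the image of $m_{d+1}$, then descend through Proposition \ref{prop:universal_E}) matches the paper's, but the central step has a genuine gap, and it sits exactly where the paper's real work lies. The assertion that Lemma \ref{lem:theta}(iii) together with $\dim V<\infty$ ``forces a smallest $s\geq 1$ with $V_s=0$'' is a non sequitur: finite-dimensionality only forces the vectors $V_0,V_1,\ldots$ to become \emph{linearly dependent}, and the triangular description of $X$ with diagonal entries $\theta_i$ is available only while the $V_i$ remain independent. Nothing you cite rules out $V_{d+1}$ being a nonzero linear combination of $V_0,\ldots,V_d$, and upgrading dependence to actual vanishing is the whole difficulty. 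The paper achieves this with an ingredient your proposal never uses: by Proposition \ref{prop:Y_eigenvector}, $V$ also contains a simultaneous eigenvector $w$ of $t_0$ and $t_1$ (this is why the paper runs four cases with all four twists in $\Z/4\Z$, not just your $\{0\bmod 4,\,2\bmod 4\}$), and $k_1$ is chosen as the eigenvalue of $t_1^{-1}$ on $w$ --- so the root of the scalar by which $c_1$ acts is \emph{not} chosen arbitrarily via Lemma \ref{lem:Schur} alone, contrary to your setup. A rank--nullity computation with the basis (\ref{basis_t1}) shows the $k_1^{-1}$-eigenspace of $t_1$ is spanned by $v_1,v_3,\ldots,v_d$, whence $w$ is supported on odd indices with $b_d\neq 0$; comparing first coordinates of $t_0w$ yields $a_0=0$, and the recurrence (\ref{ai}) extracted from (\ref{e1:ud+1}) then propagates $a_i=0$ for all $i$ (a minimal index with $a_i\neq 0$ would force $\varrho_i=0$ and hence a proper submodule, contradicting irreducibility). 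Only after all this does $v_{d+1}=0$, hence $\varrho_{d+1}=0$ and $k_0^2=q^{-d-1}$, follow.

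Your fallback mechanism for forcing $s=d+1$ is also unsupported: it rests on the claim that every $\H_q$-submodule of $M(k_0,k_1,k_2,k_3)$ properly containing $N_s$ equals some tail $N_{s'}$, which you yourself flag as unestablished. The paper neither proves nor needs any such rigidity of the submodule lattice of $M(k_0,k_1,k_2,k_3)$; submodules of such Verma-type modules need not be coordinate tails of $\{m_i\}$, so this route would demand a substantial new argument rather than a ``careful exploitation'' of the triangular formulas. As written, the proposal assumes precisely the two facts that carry the burden of the proof: the vanishing $V_{d+1}=0$ and the tail description of submodules. The first must be proved along the paper's lines (via Proposition \ref{prop:Y_eigenvector} and the coordinate analysis above, or some substitute); the second should simply be discarded, since once $v_{d+1}=0$ is known, Proposition \ref{prop:universal_E}, irreducibility, and the dimension count finish the proof without it.
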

\begin{proof}
Suppose that $V$ is a $(d+1)$-dimensional irreducible $\H_q$-module. 
According to Propositions \ref{prop:X_eigenvector} and \ref{prop:Y_eigenvector} we may divide the argument into the following cases (i)--(iv):
\begin{enumerate}
\item[(i)] $t_0,t_3$ have a simultaneous eigenvector and $t_0,t_1$ have a 
simultaneous eigenvector in $V$.

\item[(ii)] $t_1,t_2$ have a simultaneous eigenvector and $t_0,t_1$ have a simultaneous eigenvector in $V$.

\item[(iii)] $t_1,t_2$ have a simultaneous eigenvector and $t_2,t_3$ have a simultaneous eigenvector in $V$.

\item[(iv)] $t_0,t_3$ have a simultaneous eigenvector and $t_2,t_3$ have a simultaneous eigenvector in $V$.
\end{enumerate}

(i): In this case we shall show that there are nonzero $k_0,k_1,k_2,k_3\in \F$ with $k_0^2=q^{-d-1}$ such that the $\H_q$-module $E(k_0,k_1,k_2,k_3)$ is isomorphic to $V$.

 Let $v$ denote a simultaneous eigenvector of $t_0$ and $t_3$ in $V$. Choose the scalars $k_0$ and $k_3$ as the eigenvalues of $t_0$ and $t_3$ corresponding to $v$, respectively. Let $w$ denote a simultaneous eigenvector of $t_0$ and $t_1$ in $V$. Choose $k_1$ as the eigenvalue of $t_1^{-1}$ corresponding to $w$. By Lemma \ref{lem:Schur} the element $c_1$ acts on $V$ as scalar multiplication by $k_1+k_1^{-1}$ and there exists a nonzero scalar $k_2\in \F$ such that 
$c_2$ acts on $V$ as scalar multiplication by $k_2+k_2^{-1}$. 
By Theorem \ref{thm:universal} there exists a unique $\H_q$-module homomorphism
\begin{gather}\label{Hmodule_even1}
M(k_0,k_1,k_2,k_3)\to V
\end{gather}
that sends $m_0$ to $v$.

Let $v_i$ denote the image of $m_i$ under (\ref{Hmodule_even1}) for each $i=0,1,2,\ldots$. 
Suppose that there exists an $i\in \{1,2,\ldots,d\}$ such that $v_i$ is an $\F$-linear combination of $v_0,v_1,\ldots,v_{i-1}$. 
Let $W$ denote the $\F$-subspace of $V$ spanned by $v_0,v_1,\ldots,v_{i-1}$. By construction the dimension of $W$ is less than $d+1$.
By Proposition \ref{prop:P}, $W$ is invariant under $\{t_i^{\pm 1}\}_{i=0}^3$. Hence $W$ is a proper $\H_q$-submodule of $V$, a contradiction to the irreducibility of $V$. 
Therefore 
$
\{v_i\}_{i=0}^d$ is an $\F$-basis for $V$.
Let $\{\varrho_i\}_{i\in \Z}$ denote the parameters (\ref{varphi}) corresponding to the present parameters $k_0,k_1,k_2,k_3$.  Observe that 
\begin{equation}\label{basis_t1}
v_i
\quad 
\hbox{for $i=0,2,\ldots,d-1$},
\quad 
v_1, 
\quad 
v_{i+1}-k_1^2\varrho_i v_{i-1}
\quad
\hbox{for $i=2,4,\ldots,d-1$}
\end{equation}
form an $\F$-basis for $V$. By Proposition \ref{prop:P}(i) the matrix representing $t_1$ with respect to the $\F$-basis (\ref{basis_t1}) for $V$ is 
\begin{gather*}
\begin{pmatrix}
k_1 I_{\frac{d+1}{2}} & \rvline &{\bf 0}
\\
\hline
 k_1^{-1} I_{\frac{d+1}{2}} &\rvline &k_1^{-1} I_{\frac{d+1}{2}} 
\end{pmatrix}.
\end{gather*}
By the rank-nullity theorem the $k_1^{-1}$-eigenspace of $t_1$ in $V$ has dimension $\frac{d+1}{2}$. Hence
$
v_1
$ 
and 
$ 
v_{i+1}-k_1^2\varrho_i v_{i-1}$
for $i=2,4,\ldots,d-1$ form an $\F$-basis for the $k_1^{-1}$-eigenspace of $t_1$ in $V$, 
as well as 
\begin{gather}\label{k1inv-basis}
v_i
\qquad 
\hbox{for all $i=1,3,\ldots,d$}. 
\end{gather}
Since $w$ is a $k_1^{-1}$-eigenvector of $t_1$ in $V$, the vector $w$ is an $\F$-linear combination of (\ref{k1inv-basis}).

For any $u\in V$ let $[u]$ denote the coordinate vector of $u$ relative to $\{v_i\}_{i=0}^d$. We are now particularly concerned with $[v_{d+1}]$ and $[w]$.
Let $a_i$ and $b_i$ denote the $i^{\rm th}$ entries of $[v_{d+1}]$ and $[w]$ for all $i=0,1,\ldots,d$, respectively.
As mentioned earlier the coefficients $b_i=0$ for all $i=0,2,\ldots,d-1$. 
Since $w$ is an eigenvector of $t_0$ in $V$ it follows from Proposition \ref{prop:P}(i) that $b_d\not=0$. Without loss of generality we assume that $$
b_d=1.
$$ 
Observe that the first entry of $[t_0 w]$ is equal to $-a_0k_0^{-1} q^{-d-1}$.
Since $b_0=0$ it follows that 
$$
a_0=0.
$$ 
We now show that $a_i=0$ for all $i=1,2,\ldots,d$. 
Using Lemma \ref{lem:XYinP}(i) yields that $(1-k_0 k_3 q^{d+1} X^{-1}) v_i$ is equal to 
\begin{equation}\label{XinV}
\begin{split}
\left\{
\begin{array}{ll}
(1-q^{d+1})v_0,
\qquad
&\hbox{if $i=0$}, 
\\
(1-k_0^2 k_3^2 q^{d+3}) v_1
-q^{d+1} \varrho_1 v_0
\qquad
&\hbox{if $i=1$},
\\
(1-q^{d-i+1})v_i+q^{d-i+1}\varrho_i v_{i-1}
\qquad 
&\hbox{if $i=2,4,\ldots,d-1$},
\\
(1-k_0^2 k_3^2 q^{d+i+2}) v_i
-q^{d-i+2} \varrho_i(v_{i-1}-\varrho_{i-1} v_{i-2})
\qquad 
&\hbox{if $i=3,5,\ldots,d$}.
\end{array}
\right.
\end{split}
\end{equation}
By Lemma \ref{lem:XYinP}(i) with $i=d+1$ we have
\begin{align}\label{e1:ud+1}
[(1-k_0 k_3 q^{d+1} X^{-1}) v_{d+1}]
&=
\begin{pmatrix}
0 \\
0 \\
\vdots \\
0 \\
\varrho_{d+1}
\end{pmatrix}.
\end{align}
Evaluating the first $d$ entries of the left-hand side of (\ref{e1:ud+1}) by using (\ref{XinV}), we obtain that 
\begin{gather}\label{ai}
\varrho_i a_i
=
\left\{
\begin{array}{ll}
(q^{i-d-2}-1) a_{i-1}
\qquad 
&\hbox{for $i=1,3,\ldots,d$},
\\
(k_0^2 k_3^2 q^{d+i+1}-1) a_{i-1}
\qquad 
&\hbox{for $i=2,4,\ldots,d-1$}.
\end{array}
\right.
\end{gather}
Suppose on the contrary that there exists an $i\in\{1,2,\ldots,d\}$ with $a_i\not=0$ and $a_{i-1}=0$. By (\ref{ai}) the scalar $\varrho_i=0$. Let $W$ denote the $\F$-subspace of $V$ spanned by $v_i,v_{i+1},\ldots,v_d$. It follows from Proposition \ref{prop:P} that $W$ is invariant under $\{t_i^{\pm 1}\}_{i=0}^3$. Hence $W$ is a proper $\H_q$-submodule of $V$, a contradiction to the irreducibility of $V$. Therefore $a_i=0$ for all $i=0,1,\ldots,d$. In other words 
\begin{gather}\label{ud+1=0}
v_{d+1}=0.
\end{gather}

By (\ref{ud+1=0}) the left-hand side of (\ref{e1:ud+1}) is zero. Hence $\varrho_{d+1}=0$. Since $q$ is not a root of unity, this forces that 
$$
k_0^2=q^{-d-1}.
$$
By Lemma \ref{lem:XYinP}(ii) we have 
\begin{gather}\label{e:Yv}
\prod_{i=0}^d(1-k_0 k_1 q^{2\lceil \frac{i}{2}\rceil} Y^{(-1)^{i-1}}) v_0=v_{d+1}.
\end{gather}
By (\ref{ud+1=0}) the right-hand side of (\ref{e:Yv}) is zero. Thus, it follows from  Proposition \ref{prop:universal_E} that there exists a nontrivial $\H_q$-module homomorphism 
\begin{gather}\label{E->V}
E(k_0,k_1,k_2,k_3)\to V.
\end{gather}
Since the $\H_q$-module $V$ is irreducible it follows that (\ref{E->V}) is onto. Since $E(k_0,k_1,k_2,k_3)$ and $V$ are of dimension $d+1$ it follows that (\ref{E->V}) is an isomorphism. Therefore the theorem holds for the case (i).

(ii): Let $\e=1\pmod{4}$. Since $t_0^\e=t_1,t_1^\e=t_2,t_3^\e=t_0$ by Table \ref{Z/4Z-action}, the elements $t_0,t_3$ have a simultaneous eigenvector and $t_0,t_1$ have a 
simultaneous eigenvector in $V^\e$. By (i) there exist nonzero scalars $k_0,k_1,k_2,k_3\in \F$ with $k_0^2=q^{-d-1}$ such that $E(k_0,k_1,k_2,k_3)$ is isomorphic to $V^\e$. Therefore the $\H_q$-module $E(k_0,k_1,k_2,k_3)^{-\e}$ is isomorphic to $V$. The theorem holds for the case (ii).

(iii): Let $\e=2\pmod{4}$. Since $t_0^\e=t_2,t_1^\e=t_3,t_3^\e=t_1$ by Table \ref{Z/4Z-action}, the elements $t_0,t_3$ have a simultaneous eigenvector and $t_0,t_1$ have a 
simultaneous eigenvector in $V^\e$. By (i) there exist nonzero scalars $k_0,k_1,k_2,k_3\in \F$ with $k_0^2=q^{-d-1}$ such that $E(k_0,k_1,k_2,k_3)$ is isomorphic to $V^\e$. Therefore the $\H_q$-module $E(k_0,k_1,k_2,k_3)^{-\e}$ is isomorphic to $V$. The theorem holds for the case (iii).

(iv): Let $\e=3\pmod{4}$. Since $t_0^\e=t_3,t_1^\e=t_0,t_3^\e=t_2$ by Table \ref{Z/4Z-action} the elements $t_0,t_3$ have a simultaneous eigenvector and $t_0,t_1$ have a 
simultaneous eigenvector in $V^\e$. By (i) there exist nonzero scalars $k_0,k_1,k_2,k_3\in \F$ with $k_0^2=q^{-d-1}$ such that $E(k_0,k_1,k_2,k_3)$ is isomorphic to $V^\e$. Therefore the $\H_q$-module $E(k_0,k_1,k_2,k_3)^{-\e}$ is isomorphic to $V$. The theorem holds for the case (iv).
\end{proof}

\begin{lem}\label{lem:injective}
Let $d\geq 1$ denote an odd integer. 
For any nonzero $k_0,k_1,k_2,k_3\in \F$ with $k_0^2=q^{-d-1}$ the determinants of $t_0,t_1,t_2,t_3$ on $E(k_0,k_1,k_2,k_3)$ are $q^{-d-1},1,1,1$ respectively.
\end{lem}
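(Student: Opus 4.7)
The plan is to compute each $\det(t_i)$ directly from the matrix representing $t_i$ on the basis $\{v_j\}_{j=0}^d$ given in Proposition \ref{prop:E}(i), by decomposing the matrix into small blocks whose determinants can be read off.

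For $t_1$, I note that $t_1 v_j = k_1^{-1} v_j$ for every odd $j$, so the span $U$ of the odd-indexed basis vectors is a $t_1$-eigenspace with eigenvalue $k_1^{-1}$. For each even $j$ the coefficient of $v_j$ in $t_1 v_j$ is $k_1$ while all remaining components of $t_1 v_j$ lie in $U$. Hence after reordering the basis so that the $\frac{d+1}{2}$ odd-indexed vectors come first, the matrix of $t_1$ becomes block triangular with diagonal blocks $k_1^{-1} I_{\frac{d+1}{2}}$ and $k_1 I_{\frac{d+1}{2}}$, so $\det(t_1)=1$. The proof for $t_3$ is entirely symmetric: the even-indexed span is a $k_3$-eigenspace and the induced action of $t_3$ on the odd-indexed quotient is scalar $k_3^{-1}$, giving $\det(t_3)=1$.

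For $t_0$, the formulas in Proposition \ref{prop:E}(i) single out $v_0$ and $v_d$ as two singleton $k_0$-eigenblocks, and the remaining basis vectors group naturally into $\frac{d-1}{2}$ pairs $(v_{2j-1},v_{2j})$ with $j=1,\ldots,\frac{d-1}{2}$. Each such pair is easily checked to be $t_0$-invariant, and a direct $2\times 2$ determinant computation, expanding $(1-q^{2j})(1-k_0^2 q^{2j})$ and collecting terms, collapses to $k_0^{-1}q^{-2j}\cdot k_0 q^{2j} = 1$. Multiplying the two singleton contributions and the $\frac{d-1}{2}$ block contributions yields $\det(t_0) = k_0\cdot k_0 = k_0^2 = q^{-d-1}$. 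For $t_2$, I pair basis vectors as $(v_{2j},v_{2j+1})$ for $j=0,\ldots,\frac{d-1}{2}$; each such pair is again $t_2$-invariant. The key algebraic identity
\[
\frac{(k_0 k_1 k_3 q^{2j+1}-k_2)(k_0 k_1 k_3 q^{2j+1}-k_2^{-1})}{k_0 k_1 k_3 q^{2j+1}} \;=\; k_0 k_1 k_3 q^{2j+1} - (k_2+k_2^{-1}) + (k_0 k_1 k_3 q^{2j+1})^{-1}
\]
makes the $2\times 2$ block determinant collapse to $1$ for every $j$, so $\det(t_2)=1$.

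The only substantive computations are the two $2\times 2$ block determinant simplifications (for $t_0$ and $t_2$); both reduce to elementary manipulations using $c_i = k_i + k_i^{-1}$, and I anticipate no conceptual obstacle.
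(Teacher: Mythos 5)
Your proof is correct, and it is exactly the ``routine verification using Proposition \ref{prop:E}(i)'' that the paper invokes without detail: the paper's one-line proof leaves precisely this block decomposition and the two $2\times 2$ determinant computations to the reader. Your invariant-subspace groupings (singletons $v_0,v_d$ and pairs $(v_{2j-1},v_{2j})$ for $t_0$; pairs $(v_{2j},v_{2j+1})$ for $t_2$; the odd/even triangular splittings for $t_1,t_3$) all check out against the formulas of Proposition \ref{prop:E}(i), and both block determinants do simplify to $1$, yielding $\det t_0=k_0^2=q^{-d-1}$ and $\det t_1=\det t_2=\det t_3=1$.
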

\begin{proof}
It is routine to verify the lemma by using Proposition \ref{prop:E}(i).
\end{proof}

We are now prepared to prove Theorem \ref{thm:even}.

\medskip

\noindent {\it Proof of Theorem \ref{thm:even}.} By Theorems \ref{thm:iso} and \ref{thm:irr} the map $\mathcal E$ is well-defined. By Theorem \ref{thm:onto} the map $\mathcal E$ is onto.
Suppose that there are $\e,\e'\in \Z/4\Z$ and nonzero scalars $k_i,k_i'\in \F$ for all $i=0,1,2,3$ with $k_0^2=k_0'^{2}=q^{-d-1}$ such that the $\H_q$-modules $E(k_0,k_1,k_2,k_3)^\e$ and $E(k_0',k_1',k_2',k_3')^{\e'}$ are isomorphic. 
Since $q$ is not a root of unity it follows from Lemma \ref{lem:injective} that $\e=\e'$.
By Proposition \ref{prop:E}(ii) it follows that $k_0'=k_0$ and $k_i'=k_i^{\pm 1}$ for all $i=1,2,3$. This shows the injectivity of $\mathcal E$. The result follows.
\hfill $\square$

\section{Proof of Theorem \ref{thm:iso_O}}\label{section:iso_O}

Throughout this section we use the following conventions: 
Let $d\geq 0$ denote an even integer. Assume that $k_0 k_1 k_2 k_3=q^{-d-1}$.
Let $\{v_i\}_{i=0}^d$ denote the $\F$-basis for the $\H_q$-module $O(k_0,k_1,k_2,k_3)$ from Proposition \ref{prop:O}(i).
Define 
$$
N(k_0,k_1,k_2,k_3)
$$
to be the $\H_q$-submodule of $M(k_0,k_1,k_2,k_3)$ generated by $m_{d+1}$. Recall the parameters $\{\varrho_i\}_{i\in \Z}$ and $\{\psi_i\}_{i\in \Z}$ from (\ref{varphi}) and (\ref{varrho}). Under the assumptions we have 
\begin{align}
\varrho_i
&=\left\{
\begin{array}{ll}
(1-q^i) (1-k_0^2 q^i)
\qquad 
&\hbox{if $i$ is even},
\\
(q^{i-d-1}-1)(k_2^{-2} q^{i-d-1}-1)
\qquad 
&\hbox{if $i$ is odd};
\end{array}
\right.
\label{varphi_even}
\\
\psi_i
&=\left\{
\begin{array}{ll}
(1-q^i) (1-k_1^2 q^i)
\qquad 
&\hbox{if $i$ is even},
\\
(q^{i-d-1}-1)(k_3^{-2} q^{i-d-1}-1)
\qquad 
&\hbox{if $i$ is odd}
\end{array}
\right.
\label{varrho_even}
\end{align}
for all $i\in \Z$.

\begin{lem}\label{lem:XYinO}
\begin{enumerate}
\item The action of $X$ on $O(k_0,k_1,k_2,k_3)$ is as follows:
\begin{align*}
(1-k_0 k_3 q^{2\lceil \frac{i}{2}\rceil}X^{(-1)^{i-1}}) v_i
=
\left\{
\begin{array}{ll}
0
\qquad 
&\hbox{if $i=0$},
\\
\varrho_i
v_{i-1}
\qquad 
&\hbox{if $i=1,2,\ldots,d$}.
\end{array}
\right.
\end{align*}

\item The action of $Y$ on $O(k_0,k_1,k_2,k_3)$ is as follows:
\begin{align*}
(1-k_0 k_1 q^{2\lceil \frac{i}{2}\rceil} Y^{(-1)^{i-1}}) v_i
=
\left\{
\begin{array}{ll}
0
\qquad 
&\hbox{if $i=d$},
\\
v_{i+1}
\qquad 
&\hbox{if $i=0,1,\ldots,d-1$}.
\end{array}
\right.
\end{align*}
\end{enumerate}
\end{lem}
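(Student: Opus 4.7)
The plan is a direct computation, paralleling the argument used for Lemma \ref{lem:XYinP} and Lemma \ref{lem:XYinE}. Since $X=t_3 t_0$ and $Y=t_0 t_1$ are defined in terms of the generators, I would simply substitute the explicit formulas for $t_0,t_1,t_3$ acting on $\{v_i\}_{i=0}^d$ given in Proposition \ref{prop:O}(i) and verify that the asserted identities hold. The key supporting ingredient is the rewrite (\ref{varphi_even}) of $\varrho_i$, which uses the hypothesis $k_0k_1k_2k_3=q^{-d-1}$ to turn the odd-$i$ factor $(k_0 k_1 k_3 q^i - k_2)(k_0 k_1 k_3 q^i - k_2^{-1})$ from (\ref{varphi}) into $(q^{i-d-1}-1)(k_2^{-2}q^{i-d-1}-1)$; recognising this identification is what allows the computation to match the stated right-hand sides.

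For part (i), I would split into cases by the parity of $i$. For $i=0$, both $t_0 v_0 = k_0 v_0$ and $t_3 v_0 = k_3 v_0$, giving $Xv_0 = k_0 k_3 v_0$ and hence $(1-k_0 k_3 X^{-1})v_0 = 0$, as required. For odd $i$ with $1 \leq i \leq d-1$, I would compute $t_0 v_i = k_0^{-1} q^{-i-1}(v_i - v_{i+1})$, apply $t_3$ to each piece using the odd-index formula on $v_i$ and the even-index formula on $v_{i+1}$ (so that $t_3 v_{i+1} = k_3 v_{i+1}$), and see that the $v_{i+1}$ contributions cancel, leaving $X v_i = k_0^{-1} k_3^{-1} q^{-i-1} v_i - k_0^{-1}k_3^{-1} q^{-i-1}(1-k_2^{-2}q^{i-d-1})(1-q^{i-d-1}) v_{i-1}$, which rearranges to $(1-k_0 k_3 q^{i+1} X)v_i = \varrho_i v_{i-1}$ via (\ref{varphi_even}). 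For even $i$ with $2 \leq i \leq d$, I would apply $t_0$ (even case), then $t_3$ (odd on $v_{i-1}$, even on $v_i$) and collect terms; here the $v_{i-2}$ contribution has coefficient $k_0^{-1}q^{-i}(1-q^i)(1-k_0^2 q^i)$ times the $t_3 v_{i-1}$ expansion, and the resulting identity $(1-k_0 k_3 q^{i} X^{-1}) v_i = \varrho_i v_{i-1}$ follows after simplification.

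For part (ii), an entirely analogous case split applies. At $i = d$ (even), $t_0 v_d$ produces a $v_{d-1}$ term plus $(k_0+k_0^{-1}-k_0^{-1}q^{-d})v_d$, and $t_1 v_d$ uses the top-row formula $-k_1(1-q^d)(1-k_0^2 q^d)v_{d-1}+k_1 v_d$; combining these and applying $k_0^2 = q^{-d-1}(k_1 k_2 k_3)^{-2}$ only implicitly (through the cancellations) gives $(1-k_0 k_1 q^{d+1} X^{\pm 1})v_d = 0$. For other $i$, I would apply $t_1$ then $t_0$ case by case; in each case exactly one of the two generated off-diagonal terms survives, and that surviving term is precisely $v_{i+1}$. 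The main obstacle is purely bookkeeping — keeping track of which formula (odd vs.\ even row in Proposition \ref{prop:O}) applies to each index when composing, and invoking (\ref{varphi_even}) at the right moment. There is no conceptual difficulty, which is why the author's proof is likely to be a single sentence pointing to Proposition \ref{prop:O}.
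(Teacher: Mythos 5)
Your proposal is correct and takes essentially the same route as the paper: the paper's proof of this lemma is literally ``Similar to Lemma \ref{lem:XYinE}'', i.e.\ a direct evaluation of $X=t_3t_0$ and $Y=t_0t_1$ on the basis $\{v_i\}_{i=0}^d$ using the explicit formulas of Proposition \ref{prop:O}(i), with the hypothesis $k_0k_1k_2k_3=q^{-d-1}$ entering only through the rewrite (\ref{varphi_even}) of $\varrho_i$, exactly as you describe. One pair of slips in your $i=d$ case of part (ii), of notation rather than method: the identity to verify is $(1-k_0k_1q^{2\lceil d/2\rceil}Y^{-1})v_d=0$ with $q^{2\lceil d/2\rceil}=q^{d}$, not $q^{d+1}$, and the operator is $Y$, not $X$ (indeed one computes $Yv_d=k_0k_1q^{d}v_d$, with the constraint on $k_0k_1k_2k_3$ never actually needed in part (ii); also $k_0^2=q^{-2d-2}(k_1k_2k_3)^{-2}$, not $q^{-d-1}(k_1k_2k_3)^{-2}$).
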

\begin{proof}
Similar to Lemma \ref{lem:XYinE}.
\end{proof}

\begin{lem}\label{lem:N_O}
$\{m_i\}_{i=d+1}^\infty$ is an $\F$-basis for $N(k_0,k_1,k_2,k_3)$.
\end{lem}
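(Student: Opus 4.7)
The plan is to prove this by the same strategy as Lemma \ref{lem:N_E}: exhibit the $\F$-span of $\{m_i\}_{i=d+1}^\infty$ as a submodule containing $m_{d+1}$, which forces it to equal $N(k_0,k_1,k_2,k_3)$; linear independence is free because $\{m_i\}_{i=0}^\infty$ is already an $\F$-basis for $M(k_0,k_1,k_2,k_3)$ by Proposition \ref{prop:P}.

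First I would let $W$ denote the $\F$-subspace of $M(k_0,k_1,k_2,k_3)$ spanned by $\{m_i\}_{i=d+1}^\infty$, and verify directly from Proposition \ref{prop:P}(i) that $W$ is invariant under $t_0,t_1,t_2,t_3$. Using that $c_i = t_i+t_i^{-1}$ acts as a scalar on $M(k_0,k_1,k_2,k_3)$ by Proposition \ref{prop:P}(ii), invariance of $W$ under $t_i$ then gives invariance under $t_i^{-1}$. The only formulas in Proposition \ref{prop:P}(i) that lower the index are those involving $m_{i-1}$, namely: $t_0 m_i$ and $t_1 m_i$ for even $i$, and $t_2 m_i$ and $t_3 m_i$ for odd $i$. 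For $i\ge d+2$ the vector $m_{i-1}$ already lies in $W$, so invariance is automatic. The only dangerous indices are $i=d+2$ (even) and $i=d+1$ (odd).

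For $i=d+2$ (even) the formulas for $t_0 m_{d+2}$ and $t_1 m_{d+2}$ produce $m_{d+1}\in W$, causing no escape. The delicate case is $i=d+1$ (odd), where the formulas for $t_2 m_{d+1}$ and $t_3 m_{d+1}$ involve $m_d$ with coefficients
$$
\frac{(k_0 k_1 k_3 q^{d+1}-k_2)(k_0 k_1 k_3 q^{d+1}-k_2^{-1})}{k_0 k_1 k_3 q^{d+1}}, \qquad -k_3^{-1}(k_0 k_1 k_3 q^{d+1}-k_2)(k_0 k_1 k_3 q^{d+1}-k_2^{-1})
$$
respectively. The hypothesis $k_0 k_1 k_2 k_3 = q^{-d-1}$ gives $k_0 k_1 k_3 q^{d+1} = k_2^{-1}$, so both factors $k_0 k_1 k_3 q^{d+1}-k_2^{-1}$ vanish and the coefficients of $m_d$ are zero. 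Hence $t_2 m_{d+1}, t_3 m_{d+1}\in W$, completing the verification that $W$ is an $\H_q$-submodule.

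Since $m_{d+1}\in W$, the submodule $N(k_0,k_1,k_2,k_3)$ is contained in $W$. Conversely, applying to $m_{d+1}$ the operators built from $\{t_i^{\pm 1}\}_{i=0}^3$ that appear in Lemma \ref{lem:XYinP}(ii) produces, by an immediate induction on $i\ge d+1$, each $m_i$ as a linear combination of elements of $N(k_0,k_1,k_2,k_3)$; therefore $W\subseteq N(k_0,k_1,k_2,k_3)$, and $W = N(k_0,k_1,k_2,k_3)$. Because $\{m_i\}_{i=0}^\infty$ is $\F$-linearly independent in $M(k_0,k_1,k_2,k_3)$, the subset $\{m_i\}_{i=d+1}^\infty$ is an $\F$-basis for $N(k_0,k_1,k_2,k_3)$. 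The only real obstacle is the bookkeeping in the two coefficient identities at $i=d+1$; once the balanced-parameter condition $k_0 k_1 k_2 k_3 = q^{-d-1}$ is used, everything else is routine.
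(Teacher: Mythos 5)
Your proof is correct and follows the same route the paper intends: the paper's proof is the one-line remark ``Similar to Lemma \ref{lem:N_E}'', which in turn just invokes Proposition \ref{prop:P}, and what you have written out is exactly the suppressed verification --- the span of $\{m_i\}_{i=d+1}^\infty$ is a submodule because the only index-lowering coefficients that could escape, those of $m_d$ in $t_2 m_{d+1}$ and $t_3 m_{d+1}$, contain the factor $k_0k_1k_3q^{d+1}-k_2^{-1}$, which vanishes since $k_0k_1k_2k_3=q^{-d-1}$ forces $k_0k_1k_3q^{d+1}=k_2^{-1}$, while Lemma \ref{lem:XYinP}(ii) recovers every $m_i$, $i\geq d+1$, from $m_{d+1}$, giving the reverse inclusion. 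Your handling of $t_i^{-1}$-invariance via $t_i^{-1}=c_i-t_i$ and the deduction of linear independence from the basis $\{m_i\}_{i=0}^\infty$ are likewise exactly what the paper's terse proof presupposes.
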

\begin{proof}
Similar to Lemma \ref{lem:N_E}.
\end{proof}

\begin{lem}\label{lem:O}
There exists a unique $\H_q$-module isomorphism 
$$
M(k_0,k_1,k_2,k_3)/N(k_0,k_1,k_2,k_3)\to O(k_0,k_1,k_2,k_3)
$$ 
that sends $m_i+N(k_0,k_1,k_2,k_3)$ to $v_i$ for all $i=0,1,\ldots,d$.
\end{lem}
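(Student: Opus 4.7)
The plan is to mimic the proof of Lemma \ref{lem:E} verbatim, using $O(k_0,k_1,k_2,k_3)$ in place of $E(k_0,k_1,k_2,k_3)$ and appealing to Lemma \ref{lem:XYinO} and Lemma \ref{lem:N_O} in place of their even analogues.

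First I would read off from Proposition \ref{prop:O} that the basis vector $v_0$ of $O(k_0,k_1,k_2,k_3)$ satisfies
$$
t_0 v_0 = k_0 v_0,\qquad t_3 v_0 = k_3 v_0,\qquad c_1 v_0 = (k_1+k_1^{-1}) v_0,\qquad c_2 v_0 = (k_2+k_2^{-1}) v_0,
$$
the first two being explicit from Proposition \ref{prop:O}(i) and the last two from Proposition \ref{prop:O}(ii). The universal property Theorem \ref{thm:universal} then yields a unique $\H_q$-module homomorphism
$$
\Psi : M(k_0,k_1,k_2,k_3) \to O(k_0,k_1,k_2,k_3)
$$
that sends $m_0$ to $v_0$.

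Next I would show, by induction on $i$, that $\Psi(m_i) = v_i$ for all $i = 0,1,\ldots,d$ and $\Psi(m_{d+1}) = 0$. Comparing Lemma \ref{lem:XYinP}(ii) with Lemma \ref{lem:XYinO}(ii), the vector $m_{i+1}$ is obtained from $m_i$ by applying the operator $1 - k_0 k_1 q^{2\lceil i/2\rceil} Y^{(-1)^{i-1}}$ inside $M(k_0,k_1,k_2,k_3)$, and in $O(k_0,k_1,k_2,k_3)$ the same operator sends $v_i$ to $v_{i+1}$ for $0 \leq i \leq d-1$ and annihilates $v_d$. Since $\Psi$ is an $\H_q$-module homomorphism, the images agree. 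Consequently $m_{d+1}$ lies in $\ker\Psi$, and so does the entire submodule $N(k_0,k_1,k_2,k_3)$ it generates. Hence $\Psi$ descends to a homomorphism
$$
\overline{\Psi} : M(k_0,k_1,k_2,k_3)/N(k_0,k_1,k_2,k_3) \to O(k_0,k_1,k_2,k_3)
$$
sending $m_i + N(k_0,k_1,k_2,k_3)$ to $v_i$ for all $i = 0,1,\ldots,d$. Uniqueness is automatic because the quotient is generated as an $\H_q$-module by the single coset $m_0 + N(k_0,k_1,k_2,k_3)$.

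Finally, to see that $\overline{\Psi}$ is an isomorphism, I would invoke Lemma \ref{lem:N_O}: since $\{m_i\}_{i=d+1}^\infty$ is an $\F$-basis for $N(k_0,k_1,k_2,k_3)$, the cosets $\{m_i + N(k_0,k_1,k_2,k_3)\}_{i=0}^d$ are an $\F$-basis for the quotient. These $d+1$ cosets map bijectively to the $\F$-basis $\{v_i\}_{i=0}^d$ of $O(k_0,k_1,k_2,k_3)$, so $\overline{\Psi}$ is an isomorphism. There is no real obstacle here: the argument is entirely parallel to that of Lemma \ref{lem:E}, and the only nontrivial ingredient — that the chain of $Y$-operators terminates at $v_d$ — is exactly the content of Lemma \ref{lem:XYinO}(ii) at $i=d$.
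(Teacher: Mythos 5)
Your proposal is correct and is exactly the argument the paper intends: its own proof of Lemma \ref{lem:O} consists solely of the remark ``Similar to Lemma \ref{lem:E},'' and your write-up carries out that adaptation faithfully, invoking Proposition \ref{prop:O} for the eigenvector properties of $v_0$, Theorem \ref{thm:universal} for the map from $M(k_0,k_1,k_2,k_3)$, the comparison of Lemma \ref{lem:XYinP}(ii) with Lemma \ref{lem:XYinO}(ii) to get $\Psi(m_i)=v_i$ and $\Psi(m_{d+1})=0$, and Lemma \ref{lem:N_O} to identify the quotient basis. Nothing is missing, and the uniqueness argument via generation by $m_0+N(k_0,k_1,k_2,k_3)$ is the standard one used throughout the paper.
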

\begin{proof}
Similar to Lemma \ref{lem:E}.
\end{proof}

\begin{prop}\label{prop:universal_O}
Let $V$ denote an $\H_q$-module.
If the element
\begin{gather*}
\prod_{i=0}^{d}
(1-k_0 k_1q^{2\lceil \frac{i}{2}\rceil}Y^{(-1)^{i-1}})
\end{gather*}
vanishes at some vector $v$ of $V$ and there is an $\H_q$-module homomorphism $M(k_0,k_1,k_2,k_3)\to V$ that sends $m_0$ to $v$, then there exists an $\H_q$-module homomorphism $O(k_0,k_1,k_2,k_3)\to V$ that sends $v_0$ to $v$.
\end{prop}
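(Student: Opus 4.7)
The plan is to mirror the argument used for Proposition \ref{prop:universal_E} essentially verbatim, since the only structural ingredients needed (the universal property of $M(k_0,k_1,k_2,k_3)$, the action of $Y$ on its basis, the definition of $N(k_0,k_1,k_2,k_3)$ as the $\H_q$-submodule generated by $m_{d+1}$, and the isomorphism $M(k_0,k_1,k_2,k_3)/N(k_0,k_1,k_2,k_3)\cong O(k_0,k_1,k_2,k_3)$) are all now available in the odd-dimensional setting thanks to Lemmas \ref{lem:XYinO}, \ref{lem:N_O} and \ref{lem:O}.

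First I would let $\Phi:M(k_0,k_1,k_2,k_3)\to V$ denote the given $\H_q$-module homomorphism sending $m_0$ to $v$. Iterating Lemma \ref{lem:XYinP}(ii) from $i=0$ up through $i=d$, one obtains
\[
m_{d+1}=\prod_{i=0}^{d}\bigl(1-k_0 k_1 q^{2\lceil i/2\rceil}Y^{(-1)^{i-1}}\bigr)m_0
\]
in $M(k_0,k_1,k_2,k_3)$. Applying $\Phi$ to both sides and using the hypothesis that the displayed product in $Y$ annihilates $v$, it follows that $\Phi(m_{d+1})=0$. Since $N(k_0,k_1,k_2,k_3)$ is by definition the $\H_q$-submodule of $M(k_0,k_1,k_2,k_3)$ generated by $m_{d+1}$, we deduce that $N(k_0,k_1,k_2,k_3)\subseteq \ker\Phi$, so $\Phi$ factors through the quotient, yielding an $\H_q$-module homomorphism
\[
M(k_0,k_1,k_2,k_3)/N(k_0,k_1,k_2,k_3)\to V
\]
that sends $m_0+N(k_0,k_1,k_2,k_3)$ to $v$.

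Finally I would invoke Lemma \ref{lem:O} to identify the domain above with $O(k_0,k_1,k_2,k_3)$ via the isomorphism that sends $m_i+N(k_0,k_1,k_2,k_3)$ to $v_i$ for $i=0,1,\ldots,d$; precomposing with its inverse produces the desired $\H_q$-module homomorphism $O(k_0,k_1,k_2,k_3)\to V$ sending $v_0$ to $v$. There is no real obstacle here: every step is parallel to the even-dimensional case, and the only thing one must be careful about is that the index range in the product matches the index range in Lemma \ref{lem:XYinP}(ii), which it does because $d$ is an even integer in this setting and the relation $m_{i+1}=(1-k_0k_1q^{2\lceil i/2\rceil}Y^{(-1)^{i-1}})m_i$ in Proposition \ref{prop:P} holds for all $i\geq 0$ regardless of parity.
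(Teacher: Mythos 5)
Your proof is correct and is precisely the argument the paper intends: its proof of Proposition \ref{prop:universal_O} reads ``Similar to Proposition \ref{prop:universal_E},'' and your write-up is exactly that argument transplanted to the odd-dimensional setting, using Lemma \ref{lem:XYinP}(ii) to see $\Phi(m_{d+1})=0$ (valid since all factors are Laurent polynomials in $Y$ and hence commute), factoring through $N(k_0,k_1,k_2,k_3)$, and invoking Lemma \ref{lem:O}. Your closing remark about the index range and parity is a correct and sensible check, so nothing is missing.
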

\begin{proof}
Similar to Proposition \ref{prop:universal_E}.
\end{proof}

\begin{lem}\label{lem:irr_O}
If the $\H_q$-module $O(k_0,k_1,k_2,k_3)$ is irreducible then the following conditions hold:
\begin{enumerate}
\item $q^i\not=1$ for all $i=2,4,\ldots,d$.

\item $k_0^2,k_2^2\not=q^{-i}$ for all $i=2,4,\ldots,d$.
\end{enumerate}
\end{lem}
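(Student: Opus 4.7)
The plan is to mirror the proof of Lemma \ref{lem:irr_E}. First, I would observe from (\ref{varphi_even}) that conditions (i) and (ii) hold precisely when $\varrho_i \neq 0$ for every $i \in \{1,2,\ldots,d\}$. For even $i \in \{2,4,\ldots,d\}$ the factorization $\varrho_i = (1-q^i)(1-k_0^2 q^i)$ reads off condition (i) and the $k_0$-half of (ii). For odd $i \in \{1,3,\ldots,d-1\}$, substituting $j = d+1-i$ (which is even and ranges over $\{2,4,\ldots,d\}$) turns the two factors $(q^{i-d-1}-1)$ and $(k_2^{-2}q^{i-d-1}-1)$ into the conditions $q^{j}\neq 1$ and $k_2^2 \neq q^{-j}$, recovering condition (i) again together with the $k_2$-half of (ii). Hence it suffices to prove that irreducibility forces $\varrho_i \neq 0$ for every $i\in\{1,2,\ldots,d\}$.

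For the contrapositive, suppose some $\varrho_k = 0$ with $k\in\{1,2,\ldots,d\}$, and let $W$ denote the $\F$-subspace of $O(k_0,k_1,k_2,k_3)$ spanned by $\{v_i\}_{i=k}^d$. A careful inspection of Proposition \ref{prop:O} shows that the only contributions to $v_{i-1}$ in the action of any $t_j$ on $v_i$ arise in the following four situations: $t_0$ and $t_1$ acting on even $i\geq 2$, where the coefficient is a nonzero scalar multiple of $(1-q^i)(1-k_0^2 q^i) = \varrho_i$; and $t_2$ and $t_3$ acting on odd $i$, where the coefficient is (after writing $(1-q^{d-i+1}) = (1-q^{d+1-i})$ and $(1-q^{i-d-1}) = -q^{i-d-1}(1-q^{d+1-i})$) a nonzero scalar multiple of $(k_2^{-2}q^{i-d-1}-1)(q^{i-d-1}-1) = \varrho_i$. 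Consequently $\varrho_k = 0$ implies that no $t_j v_k$ contains a $v_{k-1}$-component, so $t_j W \subseteq W$ for $j = 0,1,2,3$.

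Since each $t_j$ is invertible in $\H_q$, its restriction to $V = O(k_0,k_1,k_2,k_3)$ is a bijection, so $t_j|_W$ is an injective endomorphism of the finite-dimensional space $W$, hence a bijection, which forces $t_j^{-1} W = W$ as well. Thus $W$ is invariant under $\{t_j^{\pm 1}\}_{j=0}^3$ and so is an $\H_q$-submodule. It is nonzero because $v_k \in W$ and proper because $k\geq 1$ implies $v_0 \notin W$, contradicting the irreducibility of $O(k_0,k_1,k_2,k_3)$. The only nontrivial step is the bookkeeping that identifies each of the four drop-coefficients above as proportional to the same quantity $\varrho_i$; once that is carried out the argument is identical in spirit to Lemma \ref{lem:irr_E}.
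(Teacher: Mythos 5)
Your proof is correct and takes essentially the same approach as the paper: the paper disposes of Lemma \ref{lem:irr_O} with ``Similar to Lemma \ref{lem:irr_E},'' whose argument is exactly yours---use (\ref{varphi_even}) to translate conditions (i)--(ii) into $\varrho_i\neq 0$ for all $i\in\{1,\ldots,d\}$, then show that $\varrho_k=0$ makes the span of $\{v_i\}_{i=k}^d$ a proper nonzero $\H_q$-submodule, contradicting irreducibility. Your explicit bookkeeping of the four lowering coefficients and the observation that invariance under each $t_j$ forces invariance under $t_j^{-1}$ merely spell out details the paper leaves implicit.
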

\begin{proof}
Similar to Lemma \ref{lem:irr_E}.
\end{proof}

\begin{prop}\label{prop:iso2_O}
Let $\{u_i\}_{i=0}^d$ denote the $\F$-basis for $O(k_1,k_2,k_3,k_0)$ from Proposition {\rm \ref{prop:O}(i)}. 
Then there exists a unique $\H_q$-module homomorphism 
\begin{gather}\label{iso2_O}
O(k_0,k_1,k_2,k_3)\to O(k_1,k_2,k_3,k_0)^{3\bmod{4}}
\end{gather} 
that sends $v_0$ to $u_d$. Moreover {\rm (\ref{iso2_O})} is an isomorphism if and only if the following hold:
\begin{enumerate}
\item $q^i\not=1$ for all $i=2,4,\ldots,d$.

\item $k_1^2,k_3^2\not=q^{-i}$ for all $i=2,4,\ldots,d$.
\end{enumerate}
\end{prop}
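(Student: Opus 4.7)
The plan is to combine the universal property of $M(k_0,k_1,k_2,k_3)$ from Theorem \ref{thm:universal} with Proposition \ref{prop:universal_O}. First I would verify that the vector $u_d$ of $O(k_1,k_2,k_3,k_0)^{3\bmod{4}}$ is a simultaneous eigenvector of $t_0,t_3$ with eigenvalues $k_0,k_3$, and that $c_1,c_2$ act on it as $k_1+k_1^{-1}$ and $k_2+k_2^{-1}$. Under the twist $3\bmod{4}$ from Table \ref{Z/4Z-action}, these conditions are exactly the actions of $t_3,t_2,c_0,c_1$ on $u_d$ in the untwisted $O(k_1,k_2,k_3,k_0)$, which Proposition \ref{prop:O} provides directly since $d$ is even. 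Theorem \ref{thm:universal} then yields a unique $\H_q$-module homomorphism $M(k_0,k_1,k_2,k_3)\to O(k_1,k_2,k_3,k_0)^{3\bmod{4}}$ sending $m_0\mapsto u_d$.

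To pass through $O(k_0,k_1,k_2,k_3)$ via Proposition \ref{prop:universal_O}, I must show that the product $\prod_{i=0}^d (1-k_0k_1q^{2\lceil i/2\rceil}Y^{(-1)^{i-1}})$ annihilates $u_d$ in the twisted module. The key observation is that the image of $Y=t_0t_1$ under the automorphism $3\bmod{4}$ is $t_3t_0=X$, so $Y$ acts on the twisted module exactly as $X$ acts on $V=O(k_1,k_2,k_3,k_0)$. It therefore suffices to apply the corresponding product in $X^{\pm 1}$ to $u_d$ in $V$. Carrying out the factors in the order $i=d,d-1,\ldots,1$ and invoking Lemma \ref{lem:XYinO}(i) for $V$ at each step telescopes $u_d$ through scalar multiples of $u_{d-1},u_{d-2},\ldots,u_0$, whereupon the remaining $i=0$ factor $(1-k_0k_1X^{-1})$ annihilates $u_0$. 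Proposition \ref{prop:universal_O} now produces the homomorphism $\Phi:O(k_0,k_1,k_2,k_3)\to O(k_1,k_2,k_3,k_0)^{3\bmod{4}}$ sending $v_0\mapsto u_d$; its uniqueness is immediate since $v_0$ generates $O(k_0,k_1,k_2,k_3)$ as an $\H_q$-module by Lemma \ref{lem:XYinO}(ii).

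Since both modules are $(d+1)$-dimensional, $\Phi$ is an isomorphism if and only if it is surjective, equivalently $\H_q\cdot u_d$ equals the whole codomain. Because the automorphism $3\bmod{4}$ merely permutes the generators $t_0,t_1,t_2,t_3$, the $\H_q$-submodules of $V^{3\bmod{4}}$ coincide as subspaces with those of $V=O(k_1,k_2,k_3,k_0)$. Let $\varrho_j'$ denote the parameters (\ref{varphi_even}) associated with $O(k_1,k_2,k_3,k_0)$. If $\varrho_j'\ne 0$ for every $j=1,\ldots,d$, then starting from $u_d$ and repeatedly applying $X$ and $X^{-1}$ (both elements of $\H_q$), Lemma \ref{lem:XYinO}(i) yields in succession $u_{d-1},u_{d-2},\ldots,u_0$, so $u_d$ generates $V$. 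Conversely, if $\varrho_k'=0$ for some $k\in\{1,\ldots,d\}$, the argument in the proof of Lemma \ref{lem:irr_O} applied to $O(k_1,k_2,k_3,k_0)$ exhibits $\mathrm{span}(u_k,\ldots,u_d)$ as a proper $\H_q$-submodule containing $u_d$, obstructing surjectivity. Unwinding (\ref{varphi_even}) by parity of $j$ with $k_0'=k_1$ and $k_2'=k_3$ translates ``$\varrho_j'\ne 0$ for all $j=1,\ldots,d$'' into exactly the conjunction of conditions (i) and (ii).

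The main delicacy will be ensuring the factors and parities match up correctly in the telescoping, and that the unwinding of (\ref{varphi_even}) produces exactly the stated conditions; everything else is a routine application of the universal machinery already established.
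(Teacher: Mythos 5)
Your proposal is correct, and its first half coincides with the paper's argument: the paper likewise verifies that $u_d$ satisfies $t_0u_d=k_0u_d$, $t_3u_d=k_3u_d$, $c_1u_d=(k_1+k_1^{-1})u_d$, $c_2u_d=(k_2+k_2^{-1})u_d$ in $O(k_1,k_2,k_3,k_0)^{3\bmod 4}$, invokes Theorem \ref{thm:universal}, uses $Y^{3\bmod 4}=X$ to transport Lemma \ref{lem:XYinO}(i) into the relation (\ref{YinO_iso2}) (your telescoping in the order $i=d,d-1,\ldots,0$ is legitimate since all factors are Laurent polynomials in $Y$ and hence commute, and the cycled parameters satisfy $\varrho_i'=\psi_i$), and then applies Proposition \ref{prop:universal_O}. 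Where you genuinely diverge is the isomorphism criterion. The paper proceeds computationally: it defines $w_0=v_0$ and $w_i=(1-k_0k_1q^{d-2\lfloor\frac{i-1}{2}\rfloor}Y^{(-1)^i})w_{i-1}$, shows by a triangularity induction that $\{w_i\}_{i=0}^d$ is a basis of $O(k_0,k_1,k_2,k_3)$, and computes the explicit images $w_i\mapsto\bigl(\prod_{h=1}^i\psi_{d-h+1}\bigr)u_{d-i}$, so that bijectivity is read off as $\psi_i\not=0$ for all $i=1,\ldots,d$. You instead argue via surjectivity: submodules of $V^{3\bmod 4}$ and of $V$ coincide as subspaces (true for any automorphism twist), the $X$-descent of Lemma \ref{lem:XYinO}(i) shows that $u_d$ generates when all $\varrho_j'\not=0$, and when some $\varrho_k'=0$ the span of $u_k,\ldots,u_d$ is a proper submodule containing $u_d$. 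That last step requires the observation, implicit in the proofs of Lemmas \ref{lem:irr_E} and \ref{lem:irr_O}, that every lowering coefficient in Proposition \ref{prop:O} at index $i$ is a nonzero scalar multiple of $\varrho_i'$; this does check out against Proposition \ref{prop:O}(i), and your parity unwinding of (\ref{varphi_even}) with $k_0'=k_1$, $k_2'=k_3$ yields exactly conditions (i) and (ii). Your route is softer and self-contained for the proposition itself; what the paper's explicit computation buys is the concrete scalars $\prod_{h=1}^i\psi_{d-h+1}$, which are reused later in the proof of Theorem \ref{thm:irr_O} to derive the initial values (\ref{Li0:O}) of the recurrence there, whereas your argument as stated would not supply those formulas.
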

\begin{proof}
Let $\e=3\pmod{4}$. 
Note that $(t_0^\e,t_1^\e,t_2^\e,t_3^\e, Y^\e)=(t_3,t_0,t_1,t_2,X)$ by Table \ref{Z/4Z-action}. 
By Proposition \ref{prop:O}(i) the vector $u_d$ satisfies 
\begin{gather*}
t_0 u_d=k_0 u_d,
\qquad 
t_3 u_d= k_3 u_d,
\\
c_1 u_d=(k_1+k_1^{-1}) u_d,
\qquad 
c_2 u_d=(k_2+k_2^{-1}) u_d
\end{gather*}
in $O(k_1,k_2,k_3,k_0)^\e$. By Theorem \ref{thm:universal} there exists a unique $\H_q$-module homomorphism 
$$
M(k_0,k_1,k_2,k_3)\to O(k_1,k_2,k_3,k_0)^\e
$$ that sends $m_0$ to $u_d$. By Lemma \ref{lem:XYinO}(i) we have 
\begin{align}\label{YinO_iso2}
(1-k_0 k_1 q^{2\lceil \frac{i}{2}\rceil}Y^{(-1)^{i-1}}) u_i
=
\left\{
\begin{array}{ll}
0
\qquad 
&\hbox{if $i=0$},
\\
\psi_i
u_{i-1}
\qquad 
&\hbox{if $i=1,2,\ldots,d$}.
\end{array}
\right.
\end{align}
in $O(k_1,k_2,k_3,k_0)^\e$. In particular 
$$
\prod_{i=0}^d
(1-k_0 k_1 q^{2\lceil \frac{i}{2}\rceil}Y^{(-1)^{i-1}}) u_d=0.
$$
Therefore the $\H_q$-module homomorphism (\ref{iso2_O}) exists by Proposition \ref{prop:universal_O}.

Define $w_0=v_0$ and 
$$
w_i=(1-k_0 k_1 q^{d-2\lfloor \frac{i-1}{2}\rfloor}Y^{(-1)^i}) w_{i-1}
\qquad 
\hbox{for all $i=1,2,\ldots,d$}.
$$
By Lemma \ref{lem:XYinO}(ii) a routine induction shows that $w_i$ is equal to 
$$
q^{\lceil\frac{i}{2} \rceil
(d-2\lfloor \frac{i}{2}\rfloor)} (v_i-v_{i-1})
+  
q^{\lfloor\frac{i}{2} \rfloor (d-2\lceil\frac{i}{2} \rceil+2)}
v_{i-1}
$$
plus an $\F$-linear combination of $v_0,v_1,\ldots,v_{i-2}$ for all $i=1,2,\ldots,d$. Hence $\{w_i\}_{i=0}^d$ is an $\F$-basis for $O(k_0,k_1,k_2,k_3)$. Comparing with (\ref{YinO_iso2}) yields that (\ref{iso2_O}) sends 
\begin{eqnarray*}
w_i &\mapsto& 
\left(
\prod_{h=1}^i \psi_{d-h+1}
\right) u_{d-i} 
\qquad 
\hbox{for all $i=0,1,\ldots,d$}.
\end{eqnarray*}
Hence (\ref{iso2_O}) is an isomorphism if and only if $\psi_i\not=0$ for all $i=1,2,\ldots,d$. The latter is equivalent to the conditions (i) and (ii) by (\ref{varrho_even}). The proposition follows.
\end{proof}

\begin{thm}\label{thm:irr_O}
The $\H_q$-module $O(k_0,k_1,k_2,k_3)$ is irreducible if and only if the following conditions hold:
\begin{enumerate}
\item $q^i\not=1$ for all $i=2,4,\ldots,d$.

\item $k_0^2,k_1^2,k_2^2,k_3^2\not=q^{-i}$ for all $i=2,4,\ldots,d$.
\end{enumerate}
\end{thm}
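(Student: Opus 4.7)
The plan is to prove each direction of the biconditional separately.

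For the ``only if'' direction, I would combine Lemma~\ref{lem:irr_O} with Proposition~\ref{prop:iso2_O}. Lemma~\ref{lem:irr_O} directly yields condition~(i) together with $k_0^2,k_2^2\neq q^{-i}$ for $i=2,4,\ldots,d$. For the remaining conditions $k_1^2,k_3^2\neq q^{-i}$, I would observe that when $O(k_0,k_1,k_2,k_3)$ is irreducible, the $\H_q$-module homomorphism~(\ref{iso2_O}) is nonzero (it sends $v_0$ to $u_d\neq 0$) and hence injective by irreducibility of its source. By the basis description in the proof of Proposition~\ref{prop:iso2_O} this homomorphism sends $w_i\mapsto \bigl(\prod_{h=1}^{i}\psi_{d-h+1}\bigr)u_{d-i}$, so injectivity forces $\psi_j\neq 0$ for every $j=1,2,\ldots,d$. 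Reading off the explicit formula~(\ref{varrho_even}), the even-$j$ conditions give $q^i\neq 1$ and $k_1^2\neq q^{-i}$ for $i=2,4,\ldots,d$, and the odd-$j$ conditions, after the substitution $i=d+1-j$, give $q^i\neq 1$ and $k_3^2\neq q^{-i}$ for the same range of~$i$.

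For the ``if'' direction, I would mirror the structure of the proof of Theorem~\ref{thm:irr}. Define $R=\prod_{h=1}^{d}(1-k_0k_3q^{2\lceil h/2\rceil}X^{(-1)^{h-1}})$, which by Lemma~\ref{lem:XYinO}(i) sends every vector of $O(k_0,k_1,k_2,k_3)$ to a scalar multiple of $v_0$. Introduce shift operators $S_i$ built from the $Y$-action analogous to those in Section~\ref{section:iso}, and let $L_{ij}\in\F$ be the unique scalars with $RS_iv_j=L_{ij}v_0$. Using Lemma~\ref{lem:XYinO}(ii) to annihilate the higher-index components first, the matrix $L=(L_{ij})$ is lower triangular, and its diagonal entries $L_{ii}$ are products of $\varrho_h$ and $\psi_h$ with $1\leq h\leq d$. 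Under conditions~(i) and~(ii), formulas~(\ref{varphi_even}) and~(\ref{varrho_even}) guarantee that every $\varrho_h$ and $\psi_h$ in these products is nonzero, so $L$ is nonsingular. Hence for any nonzero $w$ in a nontrivial submodule $W$ of $O(k_0,k_1,k_2,k_3)$, the coordinate vector $[w]$ relative to $\{v_i\}_{i=0}^d$ is nonzero, so $L[w]\neq 0$, meaning some $RS_iw$ is a nonzero multiple of $v_0$. Thus $v_0\in W$, and since $v_0$ generates $O(k_0,k_1,k_2,k_3)$ as an $\H_q$-module by iterated application of Lemma~\ref{lem:XYinO}(ii), $W$ is the entire module.

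The main technical obstacle is setting up $S_i$ correctly and verifying the lower-triangular structure of $L$ together with the nonvanishing of its diagonal. Since only nonsingularity of $L$ is required, I would avoid deriving a full closed-form analog of~(\ref{e:Lij}); instead I would evaluate $RS_iv_i$ directly by applying $S_i$ first to reach a leading term proportional to $v_d$ with coefficient a product of $\psi$'s, then applying $R$ to descend to $v_0$ with coefficient a product of $\varrho$'s, so that $L_{ii}$ manifestly reduces to a product of factors that are nonzero under conditions~(i) and~(ii).
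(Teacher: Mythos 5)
Your ``only if'' argument is essentially the paper's: irreducibility makes the nonzero homomorphism (\ref{iso2_O}) injective, hence an isomorphism by dimension count, and Lemma \ref{lem:irr_O} together with Proposition \ref{prop:iso2_O} (equivalently, your reading of $\psi_j\neq 0$ through (\ref{varrho_even})) yields conditions (i) and (ii). That half is correct. Your choice of $R$ is also harmless: it differs from the paper's $R=\prod_{h=1}^{d}(1-k_0^{-1}k_3^{-1}q^{-2\lceil h/2\rceil}X^{(-1)^{h}})$ by the factor $\prod_{h=1}^{d}\bigl(-(k_0k_3q^{2\lceil h/2\rceil})^{-1}X^{(-1)^{h}}\bigr)$, which is a nonzero scalar since $d$ is even makes $X^{\sum_{h=1}^{d}(-1)^h}=X^0=1$.

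The gap is in your evaluation of the diagonal of $L$, which is the crux of the ``if'' direction. First, $RS_iv_i$ cannot be computed from the leading term of $S_iv_i$ alone: by Lemma \ref{lem:XYinO}(i) and the fact that $v_0$ is an $X$-eigenvector with eigenvalue $k_0k_3$, one gets $Rv_m=\prod_{h=1}^{m}\varrho_h\prod_{h=m+1}^{d}\bigl(1-k_0k_3q^{2\lceil h/2\rceil}(k_0k_3)^{(-1)^{h-1}}\bigr)v_0$ for $m<d$, and the trailing scalars $(1-q^{h})$ (even $h$) and $(1-k_0^2k_3^2q^{h+1})$ (odd $h$) are not forced to vanish by (i) and (ii) --- nothing excludes $k_0^2k_3^2=q^{-h-1}$. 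So every component of $S_iv_i$, not just the $v_d$-component, contributes to $L_{ii}$; consistently, the paper's closed form for $L_{ii}$ contains factors such as $(-q^{d+\lfloor i/2\rfloor}k_1^2k_2^2)^{\lceil i/2\rceil}$, $\prod_h(1-q^{2h})^{-1}$ and $\prod_h(q-q^{2h-i-1})$, i.e.\ it is \emph{not} a bare product of $\varrho$'s and $\psi$'s. Second, ``$S_i$ analogous to those in Section \ref{section:iso}'' is not available here: those operators are built from the $k_1\mapsto k_1^{-1}$ twist via Proposition \ref{prop:iso2}, and no odd-dimensional analogue of that proposition is established (nor needed by the paper). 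The paper instead takes $S_i=\prod_{h=1}^{d-i}(1-k_2^{-1}k_3^{-1}q^{1-2\lceil h/2\rceil}Y^{(-1)^{h}})$, chosen precisely so that the initial column $L_{i0}$ can be computed through the isomorphism (\ref{iso2_O}) (formula (\ref{Li0:O})), after which the recurrence (\ref{L:recurrence_O}) is derived and solved. The choice matters even for lower-triangularity: with the naive chain $S_i=\prod_{m=i}^{d-1}(1-k_0k_1q^{2\lceil m/2\rceil}Y^{(-1)^{m-1}})$ one has $S_iv_i=v_d$ exactly, but for $j>i$ the leftover factors act on $v_d$ as nonzero scalars (since $v_d$ is a $Y^{\pm1}$-eigenvector by Lemma \ref{lem:XYinO}(ii)), so nothing forces $L_{ij}=0$ and triangularity fails. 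Thus the nonvanishing of the diagonal is not ``manifest''; as proposed, your proof is incomplete at exactly the step where the paper invests its computation, and to close it you would need either the paper's recurrence-plus-initial-data argument or a genuine substitute for it.
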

\begin{proof}
($\Rightarrow$): By the irreducibility of $O(k_0,k_1,k_2,k_3)$ the $\H_q$-module homomorphism (\ref{iso2_O}) is an isomorphism. By Lemma \ref{lem:irr_O} and Proposition \ref{prop:iso2_O} the conditions (i) and (ii) follow.

($\Leftarrow$): Consider the operators
\begin{align*}
R &= \prod_{h=1}^{d}
(1-k_0^{-1} k_3^{-1} q^{-2\lceil \frac{h}{2}\rceil} X^{(-1)^{h}}),
\\
S_i &= 
\prod_{h=1}^{d-i}
(1-k_2^{-1} k_3^{-1} q^{1-2\lceil \frac{h}{2}\rceil} Y^{(-1)^{h}})
\qquad 
\hbox{for all $i=0,1\ldots, d$}.
\end{align*}
Using Lemma \ref{lem:XYinO}(i) yields that $R v$ is a scalar multiple of $v_0$ for all $v\in O(k_0,k_1,k_2,k_3)$. In particular, for any $i,j\in \{0,1,\ldots,d\}$ there exists a unique $L_{ij}\in \F$ such that 
\begin{gather*}\label{Lij_O}
R S_i v_j= L_{ij} v_0.
\end{gather*}

Using  Lemma \ref{lem:XYinO}(ii) yields that 
\begin{align}
L_{ij} &=0 \qquad 
\hbox{for all $0\leq i<j\leq d$};
\label{L:lower_O}\\
L_{ij} &=
\left\{
\begin{array}{ll}
(1-k_0^2 k_1^2 q^{i+j}) L_{i,j-1}
+ L_{i-1,j}
-L_{i-1,j-1}
&\hbox{if $i=j \bmod{2}$},
\\
q^{j-i-1}
(L_{i-1,j-1}-L_{i,j-1})+L_{i,j-1}
&\hbox{if $i\not=j \bmod{2}$}
\end{array}
\right.
\label{L:recurrence_O}
\end{align}
for all $i,j\in\{1,2,\ldots,d\}$. 
By (i) and (ii) it follows from Proposition \ref{prop:iso2_O} that (\ref{iso2_O}) is an isomorphism. From this we obtain that 
\begin{gather}\label{Li0:O}
L_{i0}=
\prod_{h=0}^{\lfloor \frac{i-1}{2} \rfloor}
(1-q^{2h-d})
\prod_{h=1}^{\lfloor \frac{i}{2}\rfloor}
(1-k_1^2 k_2^2 q^{d+2h})
\prod_{h=1}^{d-i}\psi_{d-h+1}
\qquad 
\hbox{for all $i=0,1,\ldots,d$}.
\end{gather}
Solving the recurrence relation (\ref{L:recurrence_O}) with the initial conditions (\ref{L:lower_O}) and (\ref{Li0:O}) yields  that $L_{ij}$ is equal to 
\begin{align*}
&
(-q^{d+i-\lceil \frac{j}{2}\rceil} k_1^2 k_2^2)^{\lceil \frac{j}{2}\rceil}
\prod_{h=1}^{\lfloor \frac{j}{2} \rfloor}(1-q^{2h})^{-1}
\prod_{h=\lceil \frac{j}{2}\rceil}^{\lfloor \frac{i-1}{2} \rfloor}
(1-q^{2h-d})
\prod_{h=1}^{\lfloor \frac{i}{2} \rfloor-\lceil \frac{j}{2}\rceil}
(1-k_1^2 k_2^2 q^{d+2h})
\prod_{h=1}^j \varrho_h
\prod_{h=1}^{d-i} \psi_{d-h+1}
\\
&
\qquad 
\times 
\left\{
\begin{array}{ll}
\displaystyle
(k_3^{-2} q^{j-d-1}-k_0^{2}
q^{j+1}-q^{j-i}+1)
\prod_{h=1}^{\frac{j-1}{2}}
(1-q^{2h-i-1})
\qquad 
&\hbox{if $i,j$ are odd and $i>j$},
\\
\displaystyle
-k_0^{2}
q^{i+1}
\prod_{h=1}^{\frac{i-1}{2}}
(1-q^{2h-i-1})
\qquad 
&\hbox{if $i,j$ are odd and $i=j$},
\\
\displaystyle
\prod_{h=1}^{\frac{j}{2}}
(1-q^{2h-i-1})
\qquad 
&\hbox{if $i$ is odd and $j$ is even},
\\
\displaystyle
\prod_{h=1}^{\lceil \frac{j}{2}\rceil}(q-q^{2h-i-1})
\qquad 
&\hbox{if $i$ is even}
\end{array}
\right.
\end{align*}
for all $0\leq j\leq i\leq d$. In particular $L_{ii}$ is equal to 
$$
(-q^{d+\lfloor \frac{i}{2}\rfloor} k_1^2 k_2^2)^{\lceil \frac{i}{2}\rceil}
\prod_{h=1}^{\lfloor \frac{i}{2} \rfloor}(1-q^{2h})^{-1}
\prod_{h=1}^i \varrho_h
\prod_{h=1}^{d-i} \psi_{d-h+1}
\times 
\left\{
\begin{array}{ll}
\displaystyle
-k_0^{2}
q^{i+1}
\prod_{h=1}^{\frac{i-1}{2}}
(1-q^{2h-i-1})
\qquad 
&\hbox{if $i$ is odd},
\\
\displaystyle
\prod_{h=1}^{\frac{i}{2}}(q-q^{2h-i-1})
\qquad 
&\hbox{if $i$ is even}.
\end{array}
\right.
$$
It follows from (i) and (ii) that $L_{ii}$ is nonzero for all $i=0,1,\ldots,d$.
To see the irreducibility of $O(k_0,k_1,k_2,k_3)$ it remains to apply a routine argument similar to the proof of Theorem \ref{thm:irr}.
\end{proof}

We now give our proof for Theorem \ref{thm:iso_O}.

\medskip

\noindent{\it Proof of Theorem \ref{thm:iso_O}.}
(i): Immediate from Proposition \ref{prop:iso2_O} and Theorem \ref{thm:irr_O}.

(ii): By Theorem \ref{thm:irr_O} the $\H_q$-module $O(k_1,k_2,k_3,k_0)$ is irreducible. It follows from (i) that $O(k_1,k_2,k_3,k_0)$ is isomorphic to the $\H_q$-module $O(k_2,k_3,k_0,k_1)^{3\bmod{4}}$. Combined with (i) we obtain (ii).

(iii): By Theorem \ref{thm:irr_O} the $\H_q$-module $O(k_2,k_3,k_0,k_1)$ is irreducible.It follows from (i) that $O(k_2,k_3,k_0,k_1)$ is isomorphic to the $\H_q$-module $O(k_3,k_0,k_1,k_2)^{3\bmod{4}}$. Combined with (ii) we obtain (iii).
\hfill $\square$

\section{Proof of Theorem \ref{thm:odd}}\label{section:odd}

In the final section we are devoted to proving Theorem \ref{thm:odd}.

\begin{thm}\label{thm:onto_O}
Assume that $\F$ is algebraically closed and $q$ is not a root of unity. Let $d\geq 0$ denote an even integer. If $V$ is a $(d+1)$-dimensional irreducible $\H_q$-module, then there exist nonzero $k_0,k_1,k_2,k_3\in \F$ with $k_0 k_1 k_2 k_3=q^{-d-1}$ such that the $\H_q$-module $O(k_0,k_1,k_2,k_3)$ is isomorphic to $V$.
\end{thm}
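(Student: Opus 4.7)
The plan is to adapt the proof of Theorem \ref{thm:onto} to the odd-dimensional setting. By Propositions \ref{prop:X_eigenvector} and \ref{prop:Y_eigenvector}, four cases arise for $V$, according to which adjacent pair among $(t_0,t_3)$, $(t_0,t_1)$, $(t_1,t_2)$, $(t_2,t_3)$ admits a simultaneous eigenvector in $V$. Twisting $V$ by $1$, $2$, or $3\bmod 4$ reduces cases (ii), (iii), (iv) to case (i), yielding $V\cong O(k_0,k_1,k_2,k_3)^{-\e}$ for a suitable $\e$ and suitable parameters. Theorem \ref{thm:iso_O} then rewrites each such twisted $O$-module as a genuine $O$-module with cyclically shifted parameters, whose product is still $q^{-d-1}$.

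In case (i), let $v\in V$ be a simultaneous eigenvector of $t_0$ and $t_3$ with eigenvalues $k_0$ and $k_3$. From a simultaneous eigenvector of $t_0$ and $t_1$, define $k_1$ by requiring it to have $t_1$-eigenvalue $k_1^{-1}$; by Lemma \ref{lem:Schur}, the central elements $c_1$ and $c_2$ act on $V$ as scalars $k_1+k_1^{-1}$ and $k_2+k_2^{-1}$ for some nonzero $k_2\in\F$. Theorem \ref{thm:universal} produces a homomorphism $\Phi\colon M(k_0,k_1,k_2,k_3)\to V$ sending $m_0\mapsto v$; set $v_i=\Phi(m_i)$. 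Following the proof of Theorem \ref{thm:onto}(i), the irreducibility of $V$---combined with the observation (read off Proposition \ref{prop:P}) that the $\F$-span of any initial segment $v_0,\ldots,v_{i-1}$ is $\H_q$-invariant once $v_i$ lies in it---forces $\{v_i\}_{i=0}^d$ to be an $\F$-basis of $V$. The core calculation is then to show $v_{d+1}=0$: writing out the coordinate vectors of $w$ (the chosen $t_0,t_1$ eigenvector) and of $v_{d+1}$ in this basis, invoking the parity-specific structure of the $k_1^{-1}$-eigenspace of $t_1$ to pin down the coordinates of $w$, and applying $(1-k_0k_3q^{d+2}X^{-1})$ to $v_{d+1}$ via Lemma \ref{lem:XYinP}(i), one obtains a lower-triangular recurrence on the coordinates of $v_{d+1}$; irreducibility then forces every coordinate to vanish.

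Once $v_{d+1}=0$, Lemma \ref{lem:XYinP}(i) with $i=d+1$ gives $\varrho_{d+1}=0$. Since $d+1$ is odd and $q$ is not a root of unity, (\ref{varphi}) forces $k_0k_1k_3q^{d+1}\in\{k_2,k_2^{-1}\}$; because $k_2$ is determined only up to inversion by the central scalar $c_2$, we may relabel it so that $k_0k_1k_2k_3=q^{-d-1}$. Proposition \ref{prop:universal_O} then delivers a nontrivial $\H_q$-module homomorphism $O(k_0,k_1,k_2,k_3)\to V$, which is an isomorphism by irreducibility of $V$ and a dimension count. The principal technical obstacle is the parity-specific coordinate analysis in the middle step: the matrix shapes arising from Proposition \ref{prop:P} for $t_0$ and $t_1$ differ between $d$ even and $d$ odd, so the computations of Theorem \ref{thm:onto}(i) must be redone with the proper conventions---a bounded but careful adaptation of the argument already in place.
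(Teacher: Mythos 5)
Your overall architecture matches the paper's proof (four cases from Propositions \ref{prop:X_eigenvector} and \ref{prop:Y_eigenvector}, reduction by twisting plus Theorem \ref{thm:iso_O}, the universal module $M(k_0,k_1,k_2,k_3)$, the basis $\{v_i\}_{i=0}^d$, the vanishing of $v_{d+1}$, then Proposition \ref{prop:universal_O}), but the concrete conventions you commit to in the middle step are the wrong-parity ones, and as stated that step fails. With $d$ even, it is $t_1$ (not $t_0$) whose action on $v_d$ involves $v_{d+1}$: by Proposition \ref{prop:P}(i), $t_1 v_d = -k_1\varrho_d v_{d-1}+k_1 v_d+k_1^{-1}v_{d+1}$, while $t_0 v_i$ for $i\leq d$ never reaches $v_{d+1}$. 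Consequently the $k_1^{-1}$-eigenspace of $t_1$ cannot be ``read off Proposition \ref{prop:P}'' a priori --- its description involves the unknown coordinates $a_i$ of $v_{d+1}$ --- so your plan to pin down the coordinates of $w$ via that eigenspace does not get off the ground. The paper does the opposite of what you propose: it normalizes $k_1$ so that $t_1 w=k_1 w$ (not $k_1^{-1}w$), pins the coordinates $b_i$ of $w$ using the explicitly computable $t_0$-eigenspaces (two subcases, $t_0w=k_0w$ with eigenspace spanned by $v_0$ and $v_{i-1}+q^{-i}(v_i-v_{i-1})$, or $t_0w=k_0^{-1}w$ with eigenspace spanned by $v_i-(1-k_0^2q^i)v_{i-1}$), shows $b_d\neq 0$ in each, and only then extracts $a_0=0$ from the first entry of $[t_1w]=k_1[w]$, namely $k_1^{-1}a_0b_d+k_1b_0=k_1b_0$. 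Under your normalization $t_1w=k_1^{-1}w$ the same entry yields $a_0b_d=(1-k_1^2)b_0$, which does not force $a_0=0$; and note also the operator killing $v_{d+1}$ here is $1-k_0k_3q^{d+2}X$, not $1-k_0k_3q^{d+2}X^{-1}$, since $(-1)^{(d+1)-1}=+1$ for $d$ even --- two symptoms of the same transplanted-parity issue. Your closing caveat that ``the computations must be redone with the proper conventions'' does not repair this, because the specific convention you fix is the one that stalls.

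On the other hand, your endgame for identifying $k_2$ is correct and is genuinely simpler than the paper's. The paper fixes $k_2$ at the outset by an algebraic-multiplicity condition and, after $v_{d+1}=0$, computes the characteristic polynomial of $t_2$ (roots $k_2$ and $k_2^{-1}$ with $d/2$ copies each, plus $k_0^{-1}k_1^{-1}k_3^{-1}q^{-d-1}$) to conclude $k_0k_1k_2k_3=q^{-d-1}$. Your route --- $v_{d+1}=0$ forces $\varrho_{d+1}=0$ since $\varrho_{d+1}v_d=(1-k_0k_3q^{d+2}X)v_{d+1}=0$ and $v_d\neq 0$, whence $k_0k_1k_3q^{d+1}\in\{k_2,k_2^{-1}\}$, and one may replace $k_2$ by $k_2^{-1}$ --- is legitimate: the formulas in Proposition \ref{prop:P} are symmetric under $k_2\leftrightarrow k_2^{-1}$, so $M(k_0,k_1,k_2,k_3)=M(k_0,k_1,k_2^{-1},k_3)$ and the relabeling costs nothing (and the ``$q$ not a root of unity'' hypothesis is not even needed at that point). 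So: the skeleton and the $k_2$-endgame are sound, but the $a_0=0$ step needs to be replaced by the $t_0$-eigenspace analysis with the normalization $t_1w=k_1w$ before the proof is complete.
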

\begin{proof}
Suppose that $V$ is a $(d+1)$-dimensional irreducible $\H_q$-module. 
According to Propositions \ref{prop:X_eigenvector} and \ref{prop:Y_eigenvector} we may divide the argument into the following cases (i)--(iv):
\begin{enumerate}
\item[(i)] $t_0,t_3$ have a simultaneous eigenvector and $t_0,t_1$ have a simultaneous eigenvector in $V$.

\item[(ii)] $t_1,t_2$ have a simultaneous eigenvector and $t_0,t_1$ have a simultaneous eigenvector in $V$.

\item[(iii)] $t_1,t_2$ have a simultaneous eigenvector and $t_2,t_3$ have a simultaneous eigenvector in $V$.

\item[(iv)] $t_0,t_3$ have a simultaneous eigenvector and $t_2,t_3$ have a simultaneous eigenvector in $V$.
\end{enumerate}

(i): Let $v$ denote a simultaneous eigenvector of $t_0$ and $t_3$ in $V$. Choose $k_0$ and $k_3$ as the eigenvalues $t_0$ and $t_3$ corresponding to $v$, respectively. Let $w$ denote a simultaneous eigenvector of $t_0$ and $t_1$ in $V$. Choose $k_1$ as the eigenvalue of $t_1$ corresponding to $w$. By Lemma \ref{lem:Schur} the element $c_1$ acts on $V$ as scalar multiplication by $k_1+k_1^{-1}$ and there exists a nonzero scalar $k_2\in \F$ such that $c_2$ acts on $V$ as scalar multiplication by $k_2+k_2^{-1}$. Note that the eigenvalues of $t_2$ in $V$ are $k_2$ or $k_2^{-1}$. If  both $k_2$ and $k_2^{-1}$ are the eigenvalues of $t_2$ in $V$, then we require $k_2$ to satisfy that the algebraic multiplicity of $k_2$ is greater than or equal to the algebraic multiplicity of $k_2^{-1}$. 
By Theorem \ref{thm:universal} there exists a unique $\H_q$-module homomorphism 
\begin{gather}\label{M->V_O}
M(k_0,k_1,k_2,k_3) \to V
\end{gather}
that sends $m_0$ to $v$.

Let $v_i$ denote the image of $m_i$ under (\ref{M->V_O}) for all $i=0,1,2,\ldots$. Suppose that there is an $i\in \{1,2,\ldots, d\}$ such that $v_i$ is an $\F$-linear combination of $v_0,v_1,\ldots,v_{i-1}$. Let $W$ denote the $\F$-subspace of $V$ spanned by $v_0,v_1,\ldots,v_{i-1}$. By Proposition \ref{prop:P}, $W$ is invariant under $\{t_i^{\pm 1}\}_{i=0}^3$. Hence $W$ is a proper $\H_q$-submodule of $V$, a contradiction to the irreducibility of $V$. Therefore 
$
\{v_i\}_{i=0}^d
$
is an $\F$-basis for $V$.

For $u\in V$ let $[u]$ denote the coordinate vector of $u$ relative to $\{v_i\}_{i=0}^d$. Let $a_i$ and $b_i$ denote the $i$-entries of $[v_{d+1}]$ and $[w]$ for all $i=0,1,\ldots,d$, respectively. Since $w$ is a $k_1$-eigenvector of $t_1$ we have 
\begin{gather}\label{t1w=k1w}
[t_1 w]=k_1[w].
\end{gather}

We now show that $a_0=0$. To do this, we divide the argument into two cases: (a) $w$ is a $k_0$-eigenvector of $t_0$; (b) $w$ is not a $k_0$-eigenvector of $t_0$. 

(a): Using Proposition \ref{prop:P}(i) yields that the matrix representing $t_0$ with respect to the basis 
\begin{gather*}
v_0,
\quad 
v_{i-1}+q^{-i} (v_i-v_{i-1})
\quad \hbox{for $i=2,4,\ldots,d$},
\quad 
v_i 
\quad \hbox{for $i=1,3,\ldots,d-1$}
\end{gather*}
for $V$ is 
\begin{gather*}
\begin{pmatrix}
k_0   &\rvline &{\bf 0}  &\rvline &{\bf 0} 
\\
\hline
{\bf 0}  &\rvline
&k_0 I_{\frac{d}{2}}
 &\rvline & -k_0^{-1} I_{\frac{d}{2}}
\\
\hline
{\bf 0} &  \rvline 
&{\bf 0} &  \rvline
&k_0^{-1} I_{\frac{d}{2}} 
\end{pmatrix}.
\end{gather*}
By the rank-nullity theorem the $k_0$-eigenspace of $t_0$ in $V$ has dimension $\frac{d}{2}+1$. Hence the $k_0$-eigenspace of $t_0$ in $V$ is spanned by 
\begin{gather}\label{k0basis}
v_0, 
\qquad 
v_{i-1}+q^{-i} (v_i-v_{i-1})
\quad 
\hbox{for all $i=2,4,\ldots,d$}.
\end{gather}
Hence $w$ is an $\F$-linear combination of (\ref{k0basis}). Since $w\not=0$ there exists an $i\in\{0,2,\ldots,d\}$ with $b_i\not=0$. 
Let $k$ denote the largest even integer such that $b_k\not=0$. 
Suppose that $k\not=d$. Then $b_{k+2}=0$. By (\ref{k0basis}) the coefficient $b_{k+1}=0$. By Proposition \ref{prop:P}(i) the $(k+1)$-entry of $[t_1 w]$ is equal to $k_1^{-1} b_k\not=0$, a contradiction to (\ref{t1w=k1w}). Hence $b_d\not=0$. Evaluating the first entry of $[t_1 w]$ by using Proposition \ref{prop:P}(i), we obtain from (\ref{t1w=k1w}) that
$$
k_1^{-1} a_0 b_d+ k_1 b_0=k_1 b_0.
$$
Since $b_d\not=0$ it follows that $a_0=0$.

(b):  Observe that $w$ is a $k_0^{-1}$-eigenvector of $t_0$ and $k_0^2\not=1$. Using Proposition \ref{prop:P}(i) yields that the $k_0^{-1}$-eigenspace of $t_0$ is spanned by 
\begin{gather}\label{k0inv-basis}
v_i-(1-k_0^2 q^i) v_{i-1} 
\quad 
\hbox{for all $i=2,4,\ldots,d$}
\end{gather}
Hence $w$ is an $\F$-linear combination of (\ref{k0inv-basis}). Evaluating the first entry of $[t_1 w]$ by using Proposition \ref{prop:P}(i), we obtain from (\ref{t1w=k1w}) that
$$
k_1^{-1} a_0 b_d=0.
$$
By a similar argument to the case (a), we see that $b_d\not=0$. Hence $a_0=0$.

We now show that $a_i=0$ for all $i=1,2,\ldots,d$. Let $\{\varrho_i\}_{i\in \Z}$ denote the parameters (\ref{varphi_even}) corresponding to the current parameters $k_0,k_1,k_2,k_3$. Using Lemma \ref{lem:XYinP}(i) yields that 
$(1-k_0 k_3 q^{d+2} X) v_i$ is equal to 
\begin{equation}
\begin{split}\label{XV}
\left\{
\begin{array}{ll}
(1-k_0^2 k_3^2 q^{d+2}) v_0
\qquad 
&\hbox{if $i=0$},
\\
(1-q^{d-i+1}) v_i
+q^{d-i+1} \varrho_i v_{i-1}
\qquad 
&\hbox{if $i=1,3,\ldots,d-1$},
\\
(1-k_0^2 k_3^2 q^{d+i+2}) v_i
-
q^{d-i+2}\varrho_i
(v_{i-1}-\varrho_{i-1} v_{i-2})
\qquad 
&\hbox{if $i=2,4,\ldots,d$}.
\end{array}
\right.
\end{split}
\end{equation}
By Lemma \ref{lem:XYinP}(i) with $i=d+1$ we have
\begin{gather}\label{Xud+1}
[(1-k_0 k_3 q^{d+2} X) v_{d+1}]=
\begin{pmatrix}
0
\\
0
\\
\vdots
\\
0
\\
\varrho_{d+1}
\end{pmatrix}.
\end{gather}
Evaluating the first $d$ entries of the left-hand side of (\ref{Xud+1}) by using (\ref{XV}), we obtain that 
\begin{gather}\label{ak_O}
\varrho_i a_i
=\left\{
\begin{array}{ll}
(k_0^2 k_3^2 q^{d+i+1}-1) a_{i-1}
\qquad 
&\hbox{for $i=1,3,\ldots,d-1$},
\\
(q^{i-d-2}-1) a_{i-1}
\qquad 
&\hbox{for $i=2,4,\ldots,d$}.
\end{array}
\right.
\end{gather} 
Suppose on the contrary that there exists an $i\in\{1,2,\ldots,d\}$ with $a_i \not=0$ and $a_{i-1}=0$. 
By (\ref{ai}) the scalar $\varrho_i=0$. 
Let $W$ denote the $\F$-subspace of $V$ spanned by $v_i,v_{i+1},\ldots,v_d$. Combined with Proposition \ref{prop:P} yields that $W$ is invariant under $\{t_i^{\pm 1}\}_{i=0}^3$. Hence $W$ is a proper $\H_q$-submodule of $V$, a contradiction to the irreducibility of $V$. Therefore $a_i=0$ for all $i=0,1,\ldots,d$. In other words 
\begin{gather}\label{ud+1=0_O}
v_{d+1}=0.
\end{gather}

Combined with Proposition \ref{prop:P}(i) yields that the roots of the characteristic polynomial of $t_2$ in $V$ are 
$$
\underbrace{k_2, k_2, \ldots, k_2}_{\hbox{{\tiny $d/2$ copies}}},
\underbrace{k_2^{-1},k_2^{-1},\ldots, k_2^{-1}}_{\hbox{{\tiny $d/2$ copies}}},
k_0^{-1} k_1^{-1} k_3^{-1} q^{-d-1}.
$$
By the choice of $k_2$ we have 
$$
k_0 k_1 k_2 k_3=q^{-d-1}.
$$
By Lemma \ref{lem:XYinP}(ii) we have 
\begin{gather}\label{e:Yv_O}
\prod_{i=0}^d(1-k_0 k_1 q^{2\lceil \frac{i}{2}\rceil} Y^{(-1)^{i-1}}) v_0=v_{d+1}.
\end{gather}
By (\ref{ud+1=0_O}) the right-hand side of (\ref{e:Yv_O}) is zero. Thus, it follows from  Proposition \ref{prop:universal_O} that there exists a nontrivial $\H_q$-module homomorphism 
\begin{gather}\label{O->V}
O(k_0,k_1,k_2,k_3)\to V.
\end{gather}
Since the $\H_q$-module $V$ is irreducible it follows that (\ref{O->V}) is onto. Since $O(k_0,k_1,k_2,k_3)$ and $V$ are of dimension $d+1$ it follows that (\ref{O->V}) is an isomorphism. Therefore the theorem holds for the case (i).

(ii): Let $\e=1\pmod 4$. Since $t_0^\e=t_1,t_1^\e=t_2,t_3^\e=t_0$ by Table \ref{Z/4Z-action} the elements $t_0,t_3$ have a simultaneous eigenvector and $t_0,t_1$ have a simultaneous eigenvector in $V^\e$. By (i) there are nonzero $k_0,k_1,k_2,k_3\in \F$ with $k_0 k_1 k_2 k_3 =q^{-d-1}$ such that $O(k_0,k_1,k_2,k_3)$ is isomorphic to $V^\e$. Hence  $O(k_0,k_1,k_2,k_3)^{-\e}$ is isomorphic to $V$. Combined with Theorem \ref{thm:iso_O}(i) the theorem holds for the case (ii).

(iii): Let $\e=2\pmod 4$. Since $t_0^\e=t_2,t_1^\e=t_3,t_3^\e=t_1$ by Table \ref{Z/4Z-action} the elements $t_0,t_3$ have a simultaneous eigenvector and $t_0,t_1$ have a simultaneous eigenvector in $V^\e$. By (i) there are nonzero $k_0,k_1,k_2,k_3\in \F$ with $k_0 k_1 k_2 k_3 =q^{-d-1}$ such that $O(k_0,k_1,k_2,k_3)$ is isomorphic to $V^\e$. Hence  $O(k_0,k_1,k_2,k_3)^{-\e}$ is isomorphic to $V$. Combined with Theorem \ref{thm:iso_O}(ii) the theorem holds for the case (iii).

(iv): Let $\e=3\pmod 4$. Since $t_0^\e=t_3,t_1^\e=t_0,t_3^\e=t_2$ by Table \ref{Z/4Z-action} the elements $t_0,t_3$ have a simultaneous eigenvector and $t_0,t_1$ have a simultaneous eigenvector in $V^\e$. By (i) there are nonzero $k_0,k_1,k_2,k_3\in \F$ with $k_0 k_1 k_2 k_3 =q^{-d-1}$ such that $O(k_0,k_1,k_2,k_3)$ is isomorphic to $V^\e$. Hence  $O(k_0,k_1,k_2,k_3)^{-\e}$ is isomorphic to $V$. Combined with Theorem \ref{thm:iso_O}(iii) the theorem holds for the case (iv).
\end{proof}

\begin{lem}\label{lem:injective_O}
Let $d\geq 0$ denote an even integer. 
For any nonzero $k_0,k_1,k_2,k_3\in \F$ with $k_0 k_1 k_2 k_3=q^{-d-1}$ the determinants of $t_0,t_1,t_2,t_3$ on $O(k_0,k_1,k_2,k_3)$ are $k_0,k_1,k_2,k_3$ respectively.
\end{lem}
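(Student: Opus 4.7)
The plan is to exploit the block structure of each $t_i$ in the basis $\{v_j\}_{j=0}^d$ given by Proposition \ref{prop:O}(i), by finding a $t_i$-invariant filtration whose graded pieces are easy to handle. For $t_1$, I read off directly from Proposition \ref{prop:O}(i) that $t_1 v_j = k_1^{-1} v_j$ for every odd $j\in\{1,3,\ldots,d-1\}$, so $U_1=\mathrm{span}(v_1,v_3,\ldots,v_{d-1})$ is a $t_1$-invariant subspace of dimension $d/2$ on which $t_1$ acts as $k_1^{-1}\,\mathrm{id}$. Reading the remaining formulas modulo $U_1$, each even-indexed $v_j$ satisfies $t_1[v_j]=k_1[v_j]$, so $t_1$ acts as $k_1\,\mathrm{id}$ on the $(d/2+1)$-dimensional quotient. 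Multiplying yields $\det(t_1)=k_1^{-d/2}\cdot k_1^{d/2+1}=k_1$. The argument for $t_3$ is symmetric: $U_3=\mathrm{span}(v_0,v_2,\ldots,v_d)$ is $t_3$-invariant with $t_3$ acting as $k_3\,\mathrm{id}$, whereas on the quotient $t_3$ acts as $k_3^{-1}\,\mathrm{id}$, giving $\det(t_3)=k_3$.

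For $t_0$, the subspaces $W_{2j}=\mathrm{span}(v_0,v_1,\ldots,v_{2j})$ for $j=0,1,\ldots,d/2$ form an increasing $t_0$-invariant filtration, since by Proposition \ref{prop:O}(i) both $t_0 v_{2j-1}$ and $t_0 v_{2j}$ lie in $\mathrm{span}(v_{2j-1},v_{2j})\subseteq W_{2j}$. The determinant factors as the product of the determinants on the graded pieces: $W_0=\mathrm{span}(v_0)$ contributes $k_0$, and each 2-dimensional quotient $W_{2j}/W_{2j-2}$ carries the $2\times 2$ matrix
\[
\begin{pmatrix} k_0^{-1}q^{-2j} & k_0^{-1}q^{-2j}(1-q^{2j})(1-k_0^2 q^{2j}) \\ -k_0^{-1}q^{-2j} & k_0+k_0^{-1}-k_0^{-1}q^{-2j}\end{pmatrix}.
\]
Expanding $(1-q^{2j})(1-k_0^2 q^{2j})$ inside the bracket produced by cofactor expansion leads to a clean cancellation showing this determinant equals $1$. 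Hence $\det(t_0)=k_0\cdot 1^{d/2}=k_0$.

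For $t_2$, the decreasing flag $W^{2j}=\mathrm{span}(v_{2j},v_{2j+1},\ldots,v_d)$ for $j=0,1,\ldots,d/2$ is $t_2$-invariant by the same inspection of Proposition \ref{prop:O}(i): the only potentially offending term is the $v_{i-1}$-component of $t_2 v_i$ for odd $i$, which for $i=2j+1$ places the unwanted vector at $v_{2j}\in W^{2j}$. The bottom piece $W^d=\mathrm{span}(v_d)$ contributes $k_2$, and each 2-dimensional quotient $W^{2j}/W^{2j+2}$ carries a matrix of the same flavor as in the $t_0$ case whose determinant reduces to $1$ after an analogous algebraic simplification. Therefore $\det(t_2)=k_2$.

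The substantive computational content is the two $2\times 2$ determinant identities needed for $t_0$ and $t_2$; once those are verified the rest is bookkeeping with the invariant filtrations. I expect the main obstacle to be merely algebraic patience rather than conceptual difficulty, and indeed the paper has already signalled this by labelling the result as routine from Proposition \ref{prop:O}(i).
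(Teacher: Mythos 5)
Your proposal is correct and is essentially the paper's proof: the paper simply declares the lemma ``routine to verify'' from Proposition \ref{prop:O}(i), and your filtration/block-triangular bookkeeping is precisely that verification made explicit (I checked both $2\times 2$ determinants: for $t_0$ the bracket collapses to $k_0 q^{2j}$ and for $t_2$ to $k_2^{-1}q^{2j-d}$, each giving block determinant $1$, so $\det t_0=k_0$ and $\det t_2=k_2$ as you claim).
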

\begin{proof}
It is routine to verify the lemma by using Proposition \ref{prop:O}(i).
\end{proof}

We finish this paper with the proof for Theorem \ref{thm:odd}.

\medskip

\noindent {\it Proof of Theorem \ref{thm:odd}.}
By Theorem \ref{thm:irr_O} the map $\mathcal O$ is well-defined. By Theorem \ref{thm:onto_O} the map $\mathcal O$ is onto. Since each element of $\H_q$ has the same determinant on all isomorphic finite-dimensional $\H_q$-modules, it follows from Lemma \ref{lem:injective_O} that $\mathcal O$ is one-to-one.
\hfill $\square$

\subsection*{Acknowledgements}

The research is supported by the Ministry of Science and Technology of Taiwan under the project MOST 106-2628-M-008-001-MY4.

\bibliographystyle{amsplain}
\bibliography{MP}

\end{document}